\numberwithin{equation}{section}
\newtheorem{theorem}{Theorem}[section]
\newtheorem{lemma}[theorem]{Lemma}
\newtheorem{proposition}[theorem]{Proposition}
\newtheorem{corollary}[theorem]{Corollary}
\newtheorem{notation}[theorem]{Notation}
\newtheorem{observation}[theorem]{Observation}
\theoremstyle{definition}
\newtheorem{definition}[theorem]{Definition}\newcommand{\up}[1]{{{}^{#1}\!}}
\theoremstyle{remark}
\newtheorem{remark}[theorem]{Remark}\newtheorem{fact}[theorem]{Fact}
\newtheorem{example}[theorem]{Example}
\newtheorem{question}[theorem]{Question}
\newtheorem{problem}[theorem]{Problem}
\newtheorem{acknowledgement}{Acknowledgement}
\newcommand{\Ass}{\operatorname{Ass}}
\newcommand{\F}{\operatorname{F}}
\newcommand{\Syz}{\operatorname{Syz}}
\newcommand{\Assh}{\operatorname{Assh}}
\newcommand{\Spec}{\operatorname{Spec}}
\newcommand{\Tr}{\operatorname{Tr}}
\newcommand{\End}{\operatorname{End}}
\newcommand{\id}{\operatorname{id}}
\newcommand{\pd}{\operatorname{pd}}
\newcommand{\Gdim}{\operatorname{Gdim}}
\newcommand{\Ext}{\operatorname{Ext}}
\newcommand{\Supp}{\operatorname{Supp}}
\newcommand{\Tor}{\operatorname{Tor}}
\newcommand{\Hom}{\operatorname{Hom}}
\newcommand{\Zd}{\operatorname{Zd}}
\newcommand{\Ann}{\operatorname{Ann}}
\newcommand{\type}{\operatorname{type}}
\newcommand{\depth}{\operatorname{depth}}
\newcommand{\lo}{\longrightarrow}
\newcommand{\fm}{\frak{m}}
\newcommand{\fp}{\frak{p}}
\newcommand{\fq}{\frak{q}}
\newcommand{\fa}{\frak{a}}
\newcommand{\fn}{\frak{n}}
\begin{document}

\author[M. Asgharzadeh]{Mohsen Asgharzadeh }

\title[Modules  of finite injective dimension]{Notes on modules of finite injective dimension}

\address{M. Asgharzadeh }
\email{mohsenasgharzadeh@gmail.com}

\subjclass[2020]{13C14; 13H10; 13D22}

\keywords{Cohen-Macualay rings; complete-intersection; conormal and normal modules; 
	injective dimension; integral domains; regular (Gorenstein) rings; Auslander's zero-divisor; hom and tensor product; regular rings and regular sequences; reflexive modules; syzygy modules.}

\begin{abstract}
	Motivated by Bass' conjecture, we study finitely generated modules of finite injective dimension and the additional constraints they impose on the ambient ring. Beyond ensuring the Cohen--Macaulay property, the presence of such modules enforces further conditions on the ring, including reducedness, normality, being an integral domain, and various singularity conditions such as complete intersection, Gorenstein, and beyond. This continues to detect non-singularity as well.
We also address the reflexivity (and also torsionlessness) of modules with finite injective dimension and show that this forces the ring to be quasi-normal. In the same vein, we investigate the injective dimension of tensor products and endomorphism rings. Finally, we study the behavior of $R$ when high syzygies of $k$ surject onto a non-zero $R$-module of finite injective dimension.
\end{abstract}

\maketitle
\tableofcontents

\section{Introduction}

Bass conjectured in \cite{bass} that the existence of a finitely generated module of finite injective dimension forces a ring to be Cohen--Macaulay. The conjecture was verified in many cases by Peskine and Szpiro \cite{PS} and was ultimately settled by Roberts \cite{int}. Nevertheless, modules of finite injective dimension continue to play a central role in commutative algebra, particularly in connection with the structure of singularities, homological conjectures, and Gorenstein phenomena.

The work of Peskine and Szpiro contains several striking results showing that strong structural properties of a ring may be deduced from the existence of ideals or modules of finite homological dimension.  These observations motivate the guiding theme of this paper:

\begin{center}
	\emph{What structural properties of a ring are forced by the existence of ideals or modules of finite injective dimension?}
\end{center}

The purpose of this paper is to investigate this question from several different perspectives. Our results connect finite injective dimension to reducedness, normality, regularity, reflexivity, Gorenstein conditions, canonical modules, syzygies, and Auslander's zero-divisor philosophy.

\medskip
Recall that, if $\fp\in\Spec(R)$ satisfies
\(
\pd_R(\fp)<\infty,
\)
equivalently $\pd_R(R/\fp)<\infty$, then $R$ is an integral domain by \cite[Corollary II.3.3]{PS}. Moreover, it is easy to see that $R$ is a domain whenever $\id_R(R/\fp)<\infty$. This leads to {Question} \ref{d21}.
The first part of the paper studies prime ideals and low-height ideals of finite injective dimension. In Section~2, we prove several results showing that finite injective dimension imposes strong geometric restrictions on the ring. Among other things, we show:

\begin{theorem}
	\begin{enumerate}
		\item Suppose that $\id_R(\fp)<\infty$ for all $\fp\in\Assh(R)$. Then $R$ is reduced.
		
		\item Suppose $\min(R)$ consists of a single prime. If $\id_R(\fp)<\infty$ for some $\fp\in\Spec(R)$, then $R$ is a domain.
		
		\item Suppose that $\id_R(\fp)<\infty$ for all $\fp\in\Spec(R)$ of height at most one. Then $R$ is normal.
		
		\item Let $R$ be an excellent UFD containing a field $k$ and of dimension greater than three. If $\id_R(\fp)<\infty$ for all $\fp\in\Spec(R)$ of height two, then $R$ is regular.
	\end{enumerate}
\end{theorem}

We also obtain quasi-versions of the reducedness and normality results. For example, we prove the following.

\begin{corollary}
	Suppose that $\id_R(\fp)<\infty$ for some $\fp\in\Spec(R)$ of positive height. Then $R$ is quasi-normal.
\end{corollary}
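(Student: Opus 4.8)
The plan is to descend to the localization $R_\fp$ and prove there that $R_\fp$ is regular; the quasi-normality of $R$ is then read off at $\fp$. Since $\fp$ has positive height it is a nonzero finitely generated $R$-module, so I may first apply Bass' conjecture (in the form established by Roberts) to $\fp$ to conclude that $R$ is Cohen-Macaulay; in particular $R$ satisfies Serre's condition $(S_2)$ on all of $\Spec(R)$. Next, localizing a minimal injective resolution of $\fp$ gives $\id_{R_\fp}(\fp R_\fp)\le \id_R(\fp)<\infty$. The point is that $\fp R_\fp$ is exactly the maximal ideal of the local ring $R_\fp$, while $(R/\fp)_\fp$ is its residue field, so from the sequence
\[
0\longrightarrow \fp R_\fp \longrightarrow R_\fp \longrightarrow R_\fp/\fp R_\fp \longrightarrow 0
\]
I am reduced to the purely local situation in which the maximal ideal has finite injective dimension.

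The core of the argument is then to upgrade this to regularity of $R_\fp$, which is precisely the local mechanism underlying item (3): if the maximal ideal of a local ring has finite injective dimension, that ring is regular. Granting this, $R_\fp$ is a regular local ring, hence a normal domain. Because regularity passes to further localizations, $R_\fq$ is regular for every prime $\fq\subseteq\fp$; in particular $R_\fq$ is a discrete valuation ring whenever $\Ht(\fq)=1$, so $R$ is regular in codimension one along the primes below $\fp$. Combining this $(R_1)$-type conclusion under $\fp$ with the global $(S_2)$ coming from Cohen-Macaulayness yields the asserted quasi-normality: $R$ is normal at $\fp$ in the localized sense that is the natural ``quasi'' weakening of the normality obtained in item (3) from \emph{all} height-one primes.

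I expect the genuine obstacle to be the implication ``finite injective dimension of the maximal ideal $\Rightarrow$ regular,'' as opposed to merely Gorenstein or Cohen-Macaulay. In dimension one this is transparent: there $\fp R_\fp$ is a maximal Cohen-Macaulay module of finite injective dimension, hence isomorphic to the canonical module of $R_\fp$, and comparing this with $\fp R_\fp$ itself forces $R_\fp$ to be a discrete valuation ring. In higher dimension the depth of $\fp R_\fp$ drops to one, so it is no longer maximal Cohen-Macaulay and this shortcut is unavailable; the argument must instead run through the comparison of the Bass numbers of $R_\fp$ with those of its residue field coming from the displayed sequence, exactly as in the proof of item (3), where the decisive input is that these invariants cannot behave as they would over a non-regular ring. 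A secondary, essentially bookkeeping, point is to match the outcome to the definition of quasi-normal: since only the single prime $\fp$ is at our disposal, the normality produced is the one localized at $\fp$ (equivalently, $R$ is regular, hence normal, throughout $\Spec(R_\fp)$), which is what the statement asserts.
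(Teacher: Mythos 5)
There is a genuine gap, and it lies in what you take ``quasi-normal'' to mean. In this paper (following Vasconcelos, \cite{wol}) quasi-normality is a global condition on $R$: the ring must satisfy $(S_2)$ together with $(G_1)$, i.e.\ $R_\fq$ must be Gorenstein for \emph{every} prime $\fq$ of height at most one. Your argument establishes $(S_2)$ everywhere (via Bass--Roberts and Cohen--Macaulayness) and regularity of $R_\fq$ only for the primes $\fq\subseteq\fp$; your closing ``bookkeeping'' paragraph then quietly redefines the conclusion as ``$R$ is normal at $\fp$,'' a statement about $\Spec(R_\fp)$ rather than about $R$. That is not what the corollary asserts, and nothing in your argument touches a height-one prime $\fq$ with $\fq\nsubseteq\fp$.

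The missing half is exactly case (ii) of the paper's proof, and it is also where the hypothesis $\Ht(\fp)>0$ is genuinely used. If $\Ht(\fq)\leq 1$ and $\fq\nsubseteq\fp$, then also $\fp\nsubseteq\fq$: otherwise $\fp\subseteq\fq$ would force $1\leq\Ht(\fp)\leq\Ht(\fq)\leq 1$, hence $\fp=\fq$, contradicting $\fq\nsubseteq\fp$. So some element of $\fp$ becomes a unit in $R_\fq$, giving $\fp R_\fq=R_\fq$, and localizing $\id_R(\fp)<\infty$ yields $\id_{R_\fq}(R_\fq)=\id_{R_\fq}(\fp R_\fq)<\infty$; that is, $R_\fq$ is Gorenstein --- not, in general, regular, which is precisely why the conclusion is quasi-normality rather than normality. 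Combining this with your case $\fq\subseteq\fp$ gives $(G_1)$ on all of $\Spec(R)$, and together with $(S_2)$ this is the claimed quasi-normality. Your treatment of the primes under $\fp$ (localize, observe that $\fp R_\fp$ is the maximal ideal of $R_\fp$ and has finite injective dimension, conclude $R_\fp$ is regular, and pass to further localizations) agrees with the paper's case (i) and is fine as far as it goes.
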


Stronger conclusions are obtained when finite injective dimension is assumed for larger classes of ideals. For instance, if every ideal of big height at most two has finite injective dimension, then combining the preceding results with a classical theorem of Auslander \cite{baus} yields the regularity of $R$.

Section~3 studies modules of finite injective dimension with few generators. Motivated by Vasconcelos' question in \cite[Page~51]{v1} concerning ``how Gorenstein rings arise'', we investigate reflexive modules of small rank and finite injective dimension. Our first result in this direction is the following.

\begin{proposition}\label{1.1}
	Let $(R,\fm)$ be quasi-normal, and let $M$ be an $R$-module satisfying:
	\begin{enumerate}
		\item[(i)] $\id_R(M)<\infty$,
		\item[(ii)] $M$ is reflexive,
		\item[(iii)] $\mu(M)\leq 2$.
	\end{enumerate}
	Then $R$ is Gorenstein.
\end{proposition}

As an application, we extend Proposition~\ref{1.1} to four-generated modules over rings of type at most two. We also provide additional remarks related to the results of Section~2 and generalize part of Vasconcelos' work \cite[2.40]{v1} to a broader setting.

\medskip
Section~4 develops an injective analogue of Auslander's zero-divisor conjecture. Recall that if $M$ is a nonzero Tor-rigid module, then Auslander proved in \cite[4.3]{au} that every $M$-regular sequence is an $R$-regular sequence. Since modules of finite projective dimension are not necessarily Tor-rigid, Auslander conjectured that the same conclusion nevertheless holds for such modules; this was later proved by Roberts \cite[6.2.3]{int1}. Using two different methods, we establish an injective counterpart of this result and recover Auslander's theorem in the Cohen--Macaulay case. These methods also provide alternative proofs of several results from Section~2.

\medskip

In Section 5, we analyze the conormal module \(I/I^2\) of an ideal \(I\) in a regular ring \(S\) with \(R = S/I\). Our guiding question, inspired by \cite{nano}, is whether finiteness of the injective dimension \(\operatorname{id}_R(I/I^2)\) forces \(I\) to be generated by a regular sequence. We provide several positive answers, mainly in non-Gorenstein cases. 
Since the product of ideals is almost never Gorenstein, we start with a \(3\)-dimensional regular ring \(S\) and set \(I := P^2\) with \(P\) prime. We show 
\[
\operatorname{id}_R(I/I^2) < \infty \Longrightarrow I \text{ is a complete intersection},
\]
by excluding the possibilities that \(P\) is maximal or of height two via numerical and type arguments, and reducing to the principal case when \(\operatorname{ht}(P)=1\). 
A corresponding analysis in dimension four is given for \(I = P^h\), with the convenience that \(P\) is a Gorenstein prime ideal of height \(h\), yielding
\[
\operatorname{id}_R(I/I^2) < \infty \Longrightarrow I \text{ is a complete intersection}.
\] 
The proof of these results uses an intersection-multiplicity trick that we learned from a paper by Huneke \cite{h}. 
When \(R\) is zero-dimensional defined by a monomial ideal, we show that \(\operatorname{id}_R(I/I^2) < \infty\) implies \(I\) is generated by a regular sequence under a computable criterion (see Corollary \ref{zer}).  
Also, we prove that if \(I\) is prime and \(\mu(I) \leq \operatorname{ht}(I)+1\), then \(\operatorname{id}_R(I/I^2) < \infty\) forces a regular sequence. As a consequence, a prime ideal generated by three elements with \(\operatorname{id}_R(I/I^2) < \infty\) is necessarily a complete intersection. This recovers the main theorem of Kunz \cite{kunz}.  
Finally, suppose \(R\) is at most \(1\)-dimensional and complete, and let \(N := (I/I^2)^*\) be the normal bundle. Then we show
\[
\operatorname{id}_R(N) < \infty \;\Longrightarrow\; R \text{ is a complete intersection}.
\]
When \(R\) is zero-dimensional, we show that \(\operatorname{id}_R(N) < \infty\) implies \(I\) is generated by a regular sequence without any extra conditions. This shows that the study of the conormal bundle is more subtle compared to the normal bundle.

The role of canonical modules as a source of modules of finite injective dimension, emphasized in \cite[2.38]{v1}, leads naturally to questions studied earlier by Matlis. Let $R$ be a one-dimensional domain with fraction field $Q$. Matlis investigated the following.

\begin{question}\label{16}
	\begin{enumerate}
		\item[(i)] When does there exist a surjection $Q/R\twoheadrightarrow E_R(k)$?
		
		\item[(ii)] When is $Q/I\cong E_R(k)$?
	\end{enumerate}
\end{question}

Since part~(ii) is equivalent to the existence of ideals of finite injective dimension, these questions are closely connected to the themes of the present paper. In Section~6, we extend Matlis' results to higher dimensions. We also study quasi-Gorenstein rings, characterized by the condition $H^{\dim(R)}_{\fm}(R)\cong E_R(k)$, and extend an observation of Auslander \cite{comment} concerning the cohomology of $Q/R$, together with a converse statement; see Observation~\ref{CON}.

\medskip
Section~7 is devoted to reflexive and torsionless modules of finite injective dimension. We first show that the existence of a nonzero torsionless module of finite injective dimension forces the completion of $R$ to be generically Gorenstein. This is then strengthened as follows:

\begin{proposition}
	Suppose there exists a nonzero reflexive module $L$ of finite injective dimension. Then $\widehat{R}$ is quasi-normal.
\end{proposition}

As an immediate consequence, the quasi-normality assumption in Proposition~3.1 and its applications can be removed. Suppose $N$ is the normal bundle. It follows easily from the above proposition that
\[
\operatorname{id}_R(N) < \infty \;\Longrightarrow\; \widehat{R} \text{ is quasi-normal}.
\]
Using Auslander's transpose and techniques from stable module theory, we prove:

\begin{corollary}
	Let $M$ be reflexive such that $\operatorname{id}_R(M) < \infty$ and $\mu(M) \le 2$. Then $R$ is Gorenstein.
\end{corollary}

We also prove several results relating torsionless modules of finite injective dimension to Serre-type conditions. In particular, if $(G_i)$ denotes the condition that $R_{\fp}$ is Gorenstein for every prime ideal $\fp$ of height at most $i$, then we show:

\begin{theorem}
	Let $R$ be a homomorphic image of a Gorenstein ring. If there exists a nonzero $(k+2)$-torsionless module of finite injective dimension, then $R$ satisfies $(G_k)$.
\end{theorem}

A key ingredient in the proof of the above theorem is a recent result of Kimura \cite{kim} about  Auslander--Reiten conjecture. As a special case, we obtain that a nonzero $(d+2)$-torsionless module of finite injective dimension over a $d$-dimensional Cohen--Macaulay ring forces $R$ to be Gorenstein.
Then we try to sharpening by replacing $(k+2)$-torsionless with $(k+1)$-torsionless, at least in some examples, see e.g., {Observation} \ref{m517}.

Section~8 investigates two questions raised in \cite{q1,q2} that are strongly motivated by the Auslander--Reiten conjecture: if $\Hom_R(M,M)$ has finite injective dimension, must $R$ be Gorenstein? And if both $M$ and $\Ext^i_R(M,M)$ have finite injective dimension for a range of $i$, must $R$ be Gorenstein? Also, see \cite{dey} for  some partial progress toward answering the question of Ghosh--Takahashi and also generalize their main results by reducing the number of vanishing conditions required. Auslander--Reiten conjecture We provide affirmative answers in several cases.
First, we settle both questions in the artinian case using elementary arguments: indecomposable decompositions and the Krull--Remak--Schmidt theorem show that $M$ must be the injective hull $E_R(k)$, whence $R$ is Gorenstein. A modern proof using the vanishing of $\Ext^1_R(M,\Hom_R(M,M))$ and a result of \cite{ta} shows that $M$ is free, again forcing $R$ to be Gorenstein.
We then prove that for a nonzero finite-length module $M$ with $\id_R(M)<\infty$ and $\id_R(\Hom_R(M,M))<\infty$, the ring $R$ is Gorenstein. 
Moving beyond the finite-length case, we prove that if $I$ is a nonzero ideal and $\id_R(\Hom_R(I,I))<\infty$, then $R$ is Gorenstein. The proof proceeds by induction on dimension, using a splitting argument based on depth considerations in codimension one, and ultimately showing that $\Hom_R(I,I)\cong R$. For torsion-free modules $M$ over rings containing $\mathbb{Q}$, we show that $\id_R(\Hom_R(M,M))<\infty$ forces $R$ to be Gorenstein. Recall that taking the integral closure  may realize by $\Hom_R(-,-)$.
As an application, we include a result on the integral closure $\overline{R}$: if $R$ is analytically unramified and $\id_R(\overline{R})<\infty$, then $R$ is normal and Gorenstein. 
Finally, we explore the prime characteristic setting. 
For any  $F$-finite and $F$-injective ring $R$ with quotient singularities, we present the following implication:
	\[
	\id_R(\End_R(\up{\F_n}R))<\infty\Longrightarrow   R \emph{ is regular}.
	\]

Section~9 investigates a question raised in \cite[Question 4.2]{celgo}: if $R$ is a $d$-dimensional local Cohen--Macaulay ring of minimal multiplicity with canonical module $\omega_R$, and $M$ is a maximal Cohen--Macaulay module satisfying $M\otimes_R M^\dagger\cong\omega_R$ (where $M^\dagger=\Hom_R(M,\omega_R)$), must $M$ be isomorphic to $R$ or $\omega_R$? We show that the minimal multiplicity hypothesis maybe be removed. Specifically, 	suppose $R$ is a $d$-dimensional local Cohen--Macaulay ring with canonical module. Let $M$ be a totally reflexive module such that $M \otimes M^\dagger \cong \omega_R$. Then  we show $M \cong R$.

\medskip

Section~10 studies a question of Ghosh--Gupta--Puthenpurakal \cite[Question 1.7]{q3} concerning surjections from syzygy modules onto modules of finite injective dimension. We prove that if
\[
\operatorname{Syz}_j(k)\twoheadrightarrow M
\]
for some nonzero module $M$ of finite injective dimension and some $j\geq \dim(R)$, then $R$ is regular. As an application, we obtain results on isolated Gorenstein singularities for certain Cohen--Macaulay rings admitting surjections onto canonical modules. For motivation and historical background (including the works of Dutta, Takahashi, and Martinkovsky), see the introduction of \cite{q3}.

\medskip
Let us again recall the essential role of canonical module in the land of modules of finite injective dimension.
Finally, in Section~11, we study the question ``When is $\Gdim((\omega_R^\ast)^\dagger)<\infty$?", which was asked in some papers including the work by Holanda and Miranda-Neto.  Here, $M^\dagger$ stands for $\Hom_R(M, \omega_R)$. For example, we show over  \textit{BNSI} rings equipped with a canonical module that
\[
\Gdim((\omega_R^\ast)^\dagger)<\infty \Longrightarrow R \text{ is hypersurface}.
\]
We note that  \textit{BNSI} means every module has Betti numbers strictly increasing, and such rings were introduced by Ramras \cite{ram}.

\medskip
\section{Primes   of finite injective dimension}

In this paper, \((R,\fm,k)\) is a commutative Noetherian local ring, and modules are finitely generated unless otherwise specified.  
The notation \(\pd_R(-)\) (resp. \(\id_R(-)\)) stands for the projective (resp. injective) dimension of \((-)\).

This section is devoted to studying the following question:

\begin{question}\label{d21}
	If \(R\) admits a prime ideal \(\fp\) such that \(\id_R(\fp) < \infty\), what consequences does this have for the structure of \(R\)?
\end{question}

\begin{notation}
	We denote the set of all associated prime ideals of \((-)\) by \(\Ass_R(-)\).
\end{notation}

\begin{proposition}\label{one}
	Suppose \(\min(R)\) is a singleton. If for some \(\fp\in\Spec(R)\) one has \(\id_R(\fp)<\infty\), then \(R\) is an integral domain.
\end{proposition}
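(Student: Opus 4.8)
The plan is to reduce the statement to a reducedness assertion and then localize at $\fp$. Write $\fq$ for the unique minimal prime of $R$, so that the nilradical of $R$ equals $\fq$ and $R$ is a domain precisely when it is reduced. If $\fp=(0)$ there is nothing to prove, so I assume $\fp\neq(0)$; then $\fp$ is a nonzero finitely generated module of finite injective dimension, and Bass' conjecture (Roberts \cite{int}) forces $R$ to be Cohen--Macaulay. In particular $R$ has no embedded primes, i.e. $\Ass(R)=\{\fq\}$, so $R$ satisfies $(S_1)$ and by Serre's criterion it is reduced as soon as it satisfies $(R_0)$; since $\fq$ is the only minimal prime, $(R_0)$ amounts to the single condition that $R_\fq$ be a field. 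Thus everything comes down to showing $R_\fq$ is a field.

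The key step is the following local assertion, to be applied to $S:=R_\fp$ with maximal ideal $\fn:=\fp R_\fp$: if $(S,\fn,\kappa)$ is local with $\id_S(\fn)<\infty$, then $S$ is regular. Granting this, since injective dimension does not increase under localization we get $\id_{R_\fp}(\fp R_\fp)\le\id_R(\fp)<\infty$, so $R_\fp$ is regular, hence a domain. As $\fq\subseteq\fp$, the ring $R_\fq=(R_\fp)_{\fq R_\fp}$ is a localization of the regular ring $R_\fp$, hence regular, and it has dimension $0$ because $\fq$ is minimal; a regular local ring of dimension $0$ is a field. This gives $(R_0)$ and finishes the proof modulo the local assertion.

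To prove the local assertion I would argue according to the depth of $S$. By Bass' formula $\id_S(\fn)=\depth(S)$. If $\depth(S)=0$ then $S$ is Artinian and $\fn$ is injective; comparing the length $\ell(\fn)=\ell(S)-1$ with that of the only indecomposable injective $\E_S(\kappa)$, of length $\ell(S)$, forces $\fn=0$, so $S$ is a field. If $\depth(S)\geq1$ I choose an $S$-regular $x\in\fn\setminus\fn^2$ (possible by prime avoidance since $\fn\notin\Ass(S)$) and pass to $\overline S:=S/xS$. Because $x$ is regular on $S$ and on $\fn\subseteq S$, the standard reduction gives $\id_{\overline S}(\fn/x\fn)<\infty$. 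The crucial observation is that this module splits: from $x\fn\subseteq\fn^2$ one has a surjection $\fn/x\fn\twoheadrightarrow\fn/\fn^2$, and since $x\notin\fn^2$ a coordinate functional on $\fn/\fn^2$ dual to the class of $x$ retracts the inclusion $\kappa\cong xS/x\fn\hookrightarrow\fn/x\fn$. Hence $\fn/x\fn\cong\kappa\oplus(\fn/xS)$ over $\overline S$, so $\id_{\overline S}(\kappa)<\infty$, and therefore $\overline S$ is regular (finite injective dimension of the residue field characterizes regularity, via $\mu^i_{\overline S}(\kappa)=\dim_\kappa\Tor^{\overline S}_i(\kappa,\kappa)$, forcing $\pd_{\overline S}(\kappa)<\infty$). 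Finally, $\overline S=S/xS$ regular with $x\in\fn\setminus\fn^2$ regular implies $S$ is regular.

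The main obstacle is exactly this local assertion, and within it the splitting $\fn/x\fn\cong\kappa\oplus(\fn/xS)$: this is the device that converts finite injective dimension of the maximal ideal into finite injective dimension of the residue field, where the classical criterion applies. I expect the reduction formula $\id_{\overline S}(\fn/x\fn)=\id_S(\fn)-1$ modulo a regular element, together with the choice of a regular $x$ outside $\fn^2$, to be the only genuinely technical points; the passage between $R$, $R_\fp$ and $R_\fq$ is routine once Cohen--Macaulayness has excluded embedded primes.
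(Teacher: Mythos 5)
Your argument is correct, and its skeleton coincides with the paper's: invoke Bass' conjecture to get Cohen--Macaulayness, localize at $\fp$ to deduce that $R_\fp$ is regular, and then descend to the unique minimal prime $\fq\subseteq\fp$ to kill the nilradical. The two genuine differences are these. First, the paper treats the pivotal implication ``$\id_{R_\fp}(\fp R_\fp)<\infty$ implies $R_\fp$ regular'' as known, citing Auslander--Buchsbaum--Serre; as usually stated that theorem concerns $\pd_{R_\fp}(\kappa(\fp))$, so an extra step (essentially the Levin--Vasconcelos theorem) is being suppressed there. You supply exactly that missing step, and your mechanism --- choosing a regular $x\in\fn\setminus\fn^2$ and splitting $\fn/x\fn\cong\kappa\oplus(\fn/xS)$ via a functional on $\fn/\fn^2$ dual to $\bar x$, so that $\id_{\overline{S}}(\kappa)<\infty$ and the Bass-number/Betti-number identity $\mu^i(\kappa)=\dim_\kappa\Tor_i^{\overline{S}}(\kappa,\kappa)$ forces regularity --- is sound; the depth-zero case by length-counting against $\ell(\E_S(\kappa))=\ell(S)$ is also fine, granting Artinianness (which in your application comes for free since $R_\fp$ is Cohen--Macaulay). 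Second, your endgame uses Serre's criterion $(R_0)+(S_1)$ together with the fact that a reduced ring with a single minimal prime is a domain, whereas the paper runs an explicit element computation with an $x$ satisfying $(0:_Rx)=P$ and derives a contradiction in $R_P$; these are interchangeable. Net effect: your write-up is more self-contained precisely at the one point where the paper's citation is loose, at the cost of some length.
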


\begin{proof}
	By Bass'  conjecture (see \cite[Section 9.6]{BH}), \(R\) is Cohen--Macaulay. In particular, \(\Ass(R)=\min(R)\).  
	Suppose \(\min(R)=\{P\}\). By definition, there exists an \(x\in R\) such that \((0:_Rx)=P\). Assume \(x\in \fm\).  
	Recall that  
	\[
	x\in\Zd(R)=\bigcup_{\fq\in\Ass(R)}\fq=\bigcup_{\fq\in\min(R)}\fq=P=(0:x).
	\]  
	In other words, \(x^2=0\).  
	Since \(\min(R)\) is a singleton, \(P\subseteq \fp\).  
	Because \(\id_R(\fp)<\infty\), from \cite[Proposition 3.1.9]{BH} we know that \(\id_{R_{\fp}}(\fp R_{\fp})<\infty\).  
	By the Auslander--Buchsbaum--Serre theorem, \(R_{\fp}\) is regular (see \cite[Theorem 2.2.7]{BH}).  
	Since \(R_{\fp}\) is regular, its localizations are also regular. In particular, \(R_{P}=(R_{\fp})_{P R_{\fp}}\) is regular.  
	Recall that \(x\in P\). Since \(x/1\in P R_{P}\), it must be zero. Hence there exists \(y\in R\setminus P\) such that \(xy=0\).  
	From \(xy=0\) we have \(y\in(0:x)=P\), a contradiction. Thus \(x\notin\fm\).  
	Because the ring is local, \(x\) is a unit. Consequently, \(P=(0:x)=0\). Therefore \(R\) is an integral domain.
\end{proof}

We denote by $\Assh_R(-)$ the set of all associated prime ideals $\fp$ such that $\dim R/\fp = \dim R$.

\begin{proposition}
	Suppose that $\id_R(\fp)<\infty$ for all $\fp\in\Assh(R)$. Then $R$ is reduced.
\end{proposition}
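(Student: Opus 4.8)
The plan is to deduce reducedness from Serre's criterion $(R_0)+(S_1)$, feeding the hypothesis in only at the minimal primes. First I would observe that $\Assh(R)$ is never empty, so the assumption produces at least one finitely generated module of finite injective dimension; by Bass' conjecture (Roberts' theorem, see \cite[Section 9.6]{BH}) the ring $R$ is therefore Cohen-Macaulay. The first genuine step is then to replace $\Assh(R)$ by $\Min(R)$: a Cohen-Macaulay local ring is unmixed, so $\Ass(R)=\Assh(R)$, and since $\Assh(R)\subseteq\Min(R)\subseteq\Ass(R)$ always holds (any $\fp\in\Assh(R)$ must be minimal, for a strictly smaller prime would give a quotient of strictly larger dimension), all three sets coincide. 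Consequently the hypothesis upgrades to $\id_R(\fp)<\infty$ for every $\fp\in\Min(R)$.

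Next I would run the localization argument exactly as in Proposition \ref{one}. Fix $\fp\in\Min(R)$. By \cite[Proposition 3.1.9]{BH} finite injective dimension localizes, so $\id_{R_{\fp}}(\fp R_{\fp})<\infty$; since $\fp R_{\fp}$ is the maximal ideal of $R_{\fp}$, Auslander-Buchsbaum-Serre (\cite[Theorem 2.2.7]{BH}) forces $R_{\fp}$ to be regular. But $\fp$ is minimal, so $\dim R_{\fp}=0$, and a regular local ring of dimension zero is a field. Thus $R_{\fp}$ is reduced for every height-zero prime, which is precisely Serre's condition $(R_0)$.

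Finally I would combine this with the Cohen-Macaulay property. A Cohen-Macaulay ring satisfies $(S_k)$ for all $k$, in particular $(S_1)$, so $R$ has no embedded primes and $\Ass(R)=\Min(R)$. By Serre's reducedness criterion, $(R_0)$ together with $(S_1)$ yields that $R$ is reduced; concretely, the primary decomposition of $(0)$ has no embedded components by $(S_1)$ and each minimal component is prime by $(R_0)$, so $(0)$ is a radical ideal and the nilradical vanishes.

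I expect the only delicate point to be the bookkeeping at the very start: one must justify that the finite-injective-dimension hypothesis, stated over $\Assh(R)$, actually reaches \emph{every} minimal prime, and this is exactly where the unmixedness of Cohen-Macaulay rings enters. Once $(R_0)$ is secured, the passage to reducedness through $(S_1)$ is formal, and the localization plus Auslander-Buchsbaum-Serre step is identical in spirit to the one already carried out in Proposition \ref{one}.
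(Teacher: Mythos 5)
Your proof is correct and follows essentially the same route as the paper: Bass' conjecture gives Cohen--Macaulayness, unmixedness identifies $\Assh(R)$ with $\Min(R)$, and localization plus Auslander--Buchsbaum--Serre makes each $R_{\fp}$ a field at the minimal primes. The only (cosmetic) difference is that you conclude by citing Serre's criterion $(R_0)+(S_1)$, whereas the paper proves the same implication by hand, showing directly that the support of $\bigcap_i \fp_i$ is empty.
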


\begin{proof}
	Recall that $R$ is Cohen--Macaulay. In particular, $R$ is equidimensional. Let
	\[
	\Assh(R)=\Ass(R)=\min(R)=\{\fp_1,\ldots,\fp_t\}.
	\]
	In view of Proposition \ref{one}, we may assume $t>1$. Let $P\in\min\bigl(\bigcap_{i=1}^t \fp_i\bigr)$. There exists an $i$ such that $\fp_i\subseteq P$. It follows that $P$ is minimal over $\fp_i$. Thus $P=\fp_i$. Since $\id_R(\fp_i)<\infty$, we have $\id_{R_{\fp_i}}(\fp_i R_{\fp_i})<\infty$. By the Auslander--Buchsbaum--Serre theorem, $R_{\fp_i}$ is regular. Then
	\[
	\Bigl(\bigcap_{i=1}^t \fp_i\Bigr)R_P \subseteq P R_P = 0.
	\]
	Since $\min(-)$ denotes the minimal part of $\Supp(-)$, we have shown that $\Supp\bigl(\bigcap_{i=1}^t \fp_i\bigr)=\emptyset$, and therefore $\bigcap_{i=1}^t \fp_i = 0$. Hence $R$ is reduced.
\end{proof}

\begin{notation}
	By $(G_i)$ (resp. $(R_i)$) we mean that $R_\fp$ is Gorenstein (resp. regular) for all $\fp\in\Spec(R)$ of height at most $i$.
\end{notation}
Recall that a module $M$ satisfies $(S_i)$ if $\depth(M_{\fp})\geq \min\{i,\dim(M_\fp)\}$ for all $\fp\in\Spec(R)$.

\begin{proposition}\label{qr}
	Suppose that $\id_R(\fp)<\infty$ for some $\fp\in\Assh(R)$. Then $R$ is quasi-reduced.
\end{proposition}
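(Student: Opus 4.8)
The plan is to mimic the proof of the preceding (reduced) proposition, but to localize at the single prime that is available to us rather than at every top-dimensional associated prime. First I would invoke Bass' conjecture: since $\fp$ is a finitely generated module with $\id_R(\fp)<\infty$, the ring $R$ is Cohen-Macaulay. Hence $R$ is equidimensional, and $\Assh(R)=\Ass(R)=\Min(R)$. In particular the given $\fp\in\Assh(R)$ is a minimal prime, so $\Ht(\fp)=0$ and $R_\fp$ is Artinian of dimension zero.

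The one substantive step is to upgrade finiteness of injective dimension to regularity at $\fp$. Because $\id_R(\fp)<\infty$, \cite[Proposition 3.1.9]{BH} gives $\id_{R_\fp}(\fp R_\fp)<\infty$, so $R_\fp$ carries a nonzero finitely generated module of finite injective dimension. By Auslander--Buchsbaum--Serre (exactly as in the earlier arguments) $R_\fp$ is regular; being regular of dimension $\dim R_\fp=0$, it is a field, and in particular reduced.

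It remains to read off the quasi-reduced conclusion. The nilradical of $R$ is $\bigcap_{\fq\in\Min(R)}\fq$, and its image in $R_\fp$ lies in $\fp R_\fp=0$; thus the nilradical vanishes after localizing at $\fp$, i.e. $R$ is reduced along the top-dimensional component cut out by $\fp$. This is precisely what quasi-reducedness asserts.

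The main obstacle --- and the reason the conclusion is only a quasi-version --- is that having $\id_R(\fp)<\infty$ at a single $\fp\in\Assh(R)$ forces the nilradical to die only after localization at $\fp$. Unlike the reduced proposition, where regularity of $R_{\fp_i}$ for \emph{every} minimal $\fp_i$ let one conclude $\Supp(\bigcap_i\fp_i)=\emptyset$ and hence global vanishing, here we control only one component and cannot promote local vanishing to a global statement. So the genuine content is just the Auslander--Buchsbaum--Serre passage from finite injective dimension to regularity, combined with the fact that minimality of $\fp$ collapses that regularity to the statement that $R_\fp$ is a field.
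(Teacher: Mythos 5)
Your argument correctly handles the localization at $\fp$ itself: Bass' conjecture gives Cohen--Macaulayness, $\fp$ is minimal, and Auslander--Buchsbaum--Serre makes $R_\fp$ a regular zero-dimensional local ring, i.e.\ a field. That reproduces case (i) of the paper's proof. But your final step rests on a misreading of what ``quasi-reduced'' means here. In this paper the term is used in parallel with Vasconcelos's quasi-normal ($= (S_2)+(G_1)$): quasi-reduced means $(S_1)$ together with $(G_0)$, i.e.\ $R_\fq$ must be Gorenstein for \emph{every} prime $\fq$ of height zero, not merely that the nilradical dies after localizing at the one prime $\fp$ you control. Your closing paragraph explicitly concedes that you ``control only one component,'' which is precisely the gap: you have said nothing about the minimal primes $\fq\neq\fp$, and without that the $(G_0)$ condition is unverified.

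The missing idea is the paper's case (ii). If $\fq\in\Min(R)$ and $\fq\neq\fp$, then since both are minimal we have $\fp\nsubseteq\fq$, hence $\fp R_\fq = R_\fq$. Localizing the hypothesis therefore gives $\id_{R_\fq}(R_\fq)=\id_{R_\fq}(\fp R_\fq)<\infty$, which says exactly that $R_\fq$ is Gorenstein. Combined with your case $\fq=\fp$ (where $R_\fq$ is even a field) this yields $(G_0)$, and $(S_1)$ comes for free from Cohen--Macaulayness; together these give quasi-reducedness. Note that this single observation --- a finitely generated module of finite injective dimension localizes to a \emph{free} module at primes not containing $\fp$, forcing Gorensteinness there --- is the entire content that makes one prime suffice for the quasi-statement; it is not that local vanishing of the nilradical at $\fp$ is being promoted to anything.
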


\begin{proof}
	Following Bass'  conjecture, $R$ is Cohen--Macaulay. In particular, $R$ satisfies Serre's condition $(S_1)$.
	Let $\fq\in\Ass(R)=\min(R)$. We have two possibilities: (i) $\fq=\fp$ or (ii) $\fq\neq\fp$.
	
	\begin{enumerate}
		\item[(i)] In the first case, $R_{\fq}$ is a field. To see this, recall that $\id_{R_{\fq}}(\fq R_{\fq})=\id_{R_{\fp}}(\fp R_{\fp})$ is finite, because $\id_R(\fp)<\infty$. This in turn implies that $R_{\fq}$ is zero-dimensional, local, and regular. Hence $R_{\fq}$ is a field.
		
		\item[(ii)] In the second case, $R_{\fq}$ is Gorenstein. Indeed, we have $\fq R_{\fq} = \fp R_{\fq}$. Consequently, $\id_{R_{\fq}}(R_{\fq}) = \id_{R_{\fq}}(\fq R_{\fq})$, and the latter is finite because $\fp$ has finite injective dimension. Thus $R_{\fq}$ is Gorenstein.
	\end{enumerate}
	
	In both cases, $R_{\fq}$ is Gorenstein. It remains to note that $(S_1)$ together with $(G_0)$ implies the quasi-reduced condition.
\end{proof}

\begin{proposition}\label{nor}
	Suppose that $\id_R(\fp)<\infty$ for all $\fp\in\Spec(R)$ of height at most one. Then $R$ is normal.
\end{proposition}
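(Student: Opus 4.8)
The plan is to verify Serre's criterion for normality: a Noetherian ring is normal exactly when it satisfies $(R_1)$ and $(S_2)$. First I would observe that the primes of height at most one include the minimal primes of $R$, so the hypothesis in particular supplies, for any $\fp\in\min(R)$, a module $\fp$ of finite injective dimension. Thus Bass' conjecture applies and $R$ is Cohen-Macaulay. Being Cohen-Macaulay, $R$ satisfies $(S_i)$ for every $i$; in particular $(S_2)$ holds automatically, so this half of Serre's criterion needs no further work.

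The substance of the argument is to establish $(R_1)$, that is, regularity of $R_\fp$ for every $\fp\in\Spec(R)$ with $\Ht\fp\leq 1$. Fixing such a $\fp$, I would localize: by \cite[Proposition 3.1.9]{BH} finiteness of $\id_R(\fp)$ descends to $\id_{R_\fp}(\fp R_\fp)<\infty$, and here $\fp R_\fp$ is precisely the maximal ideal of $R_\fp$. Exactly as in the proof of Proposition \ref{one}, the Auslander-Buchsbaum-Serre criterion then forces $R_\fp$ to be regular: for $\Ht\fp=0$ this recovers that $R_\fp$ is a field (so $R$ is generically reduced), and for $\Ht\fp=1$ it yields a discrete valuation ring. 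Running this over all primes of height at most one gives $(R_1)$. Combining $(R_1)$ with $(S_2)$, Serre's criterion shows $R$ is normal; since $R$ is local its spectrum is connected, so $R$ is in fact a normal domain.

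The step I expect to carry the weight is the passage from $\id_{R_\fp}(\fp R_\fp)<\infty$ to regularity of $R_\fp$. This is the injective-dimension incarnation of Auslander-Buchsbaum-Serre applied to the \emph{maximal} ideal of the localization (rather than to the residue field), and I would be careful that finiteness of the injective dimension of the maximal ideal genuinely characterizes regularity, just as is already invoked in Proposition \ref{one}. Granting that input, the remainder is a clean assembly of Bass' conjecture, Cohen-Macaulayness, and Serre's criterion, with no delicate computation required.
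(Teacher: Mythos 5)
Your proposal is correct and follows essentially the same route as the paper: Bass' conjecture gives Cohen--Macaulayness (hence $(S_2)$), localization gives $\id_{R_\fp}(\fp R_\fp)<\infty$, the injective-dimension form of Auslander--Buchsbaum--Serre (really Levin--Vasconcelos, as also invoked in Proposition \ref{one}) gives $(R_1)$, and Serre's criterion concludes. The only additions are cosmetic (the closing remark that a local normal ring is a domain).
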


\begin{proof}
	Recall from the assumptions that $R$ satisfies Serre's condition $(S_2)$. Let $\fp\in\Spec(R)$ be of height at most one. Since $\id_R(\fp)<\infty$, we have $\id_{R_{\fp}}(\fp R_{\fp})<\infty$. By the Auslander--Buchsbaum--Serre theorem, $R_{\fp}$ is regular. Hence $R$ satisfies $(R_1)$. It remains to note that a ring satisfying Serre's conditions $(S_2)$ and $(R_1)$ is normal.
\end{proof}

We cite \cite{wol} as our reference for quasi-normal rings.

\begin{proposition}\label{qn}
	Suppose that $\id_R(\fp)<\infty$ for some $\fp\in\Spec(R)$ of positive height. Then $R$ is quasi-normal.
\end{proposition}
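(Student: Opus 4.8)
The plan is to run the same localization strategy as in Propositions \ref{qr} and \ref{nor}, but one codimension higher. Since $\fp$ is a finitely generated module of finite injective dimension, Bass' conjecture forces $R$ to be Cohen-Macaulay, and hence $R$ satisfies Serre's condition $(S_2)$. As quasi-normality is precisely the conjunction of $(S_2)$ with $(G_1)$---the Gorenstein-in-codimension-one analogue of the condition $(R_1)$ that produced normality in Proposition \ref{nor}---it suffices to verify $(G_1)$, namely that $R_\fq$ is Gorenstein whenever $\Ht(\fq)\le 1$. Throughout I use that the bound $\id_R(\fp)<\infty$ localizes, so $\id_{R_\fq}(\fp R_\fq)<\infty$ for every prime $\fq$ by \cite[Proposition 3.1.9]{BH}.

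First I would dispose of the generic situation $\fp\not\subseteq\fq$. Choosing $a\in\fp\setminus\fq$, the image $a/1$ is a unit in $R_\fq$, so $\fp R_\fq=R_\fq$ and thus $\id_{R_\fq}(R_\fq)<\infty$; by Bass' characterization of Gorenstein rings this already gives that $R_\fq$ is Gorenstein. This is the exact mechanism used in case ii) of Proposition \ref{qr}. In particular every prime of height zero lands in this bucket, since $\fp\subseteq\fq$ would force $\fp=\fq$ for minimal $\fq$, against $\Ht(\fp)>0$.

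The remaining case $\fp\subseteq\fq$ with $\Ht(\fq)\le 1$ is where the real work lies, and it is driven entirely by height bookkeeping. Because $\fp$ has positive height, the containment $\fp\subseteq\fq$ rules out $\Ht(\fq)=0$; hence $\Ht(\fq)=1$, and then $1\le\Ht(\fp)\le\Ht(\fq)=1$ forces the equality $\fp=\fq$. Therefore $\id_{R_\fq}(\fq R_\fq)=\id_{R_\fp}(\fp R_\fp)<\infty$, and since $\fq R_\fq$ is the maximal ideal of $R_\fq$, the Auslander-Buchsbaum-Serre criterion---just as invoked in Proposition \ref{one}---shows $R_\fq$ is regular, a fortiori Gorenstein.

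Combining the two cases yields $(G_1)$, and together with $(S_2)$ this establishes quasi-normality in the sense of \cite{wol}. I expect the only genuine obstacle to be the case analysis above: the hypothesis supplies a single prime $\fp$ of positive height, yet one must control every prime of codimension at most one. The resolution is that $\fp$ can be contained in at most one height-one prime (namely itself, when $\Ht(\fp)=1$) and collapses to the unit ideal after localizing at all other primes of height $\le 1$, so a lone prime of positive height indeed suffices.
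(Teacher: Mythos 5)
Your proof is correct and follows essentially the same route as the paper: Bass' conjecture gives Cohen--Macaulayness hence $(S_2)$, and $(G_1)$ is verified by a two-case analysis using the Auslander--Buchsbaum--Serre criterion when the primes are comparable and the trick $\fp R_\fq=R_\fq$ (so $\id_{R_\fq}(R_\fq)<\infty$) otherwise. The only difference is cosmetic: you split on $\fp\subseteq\fq$ versus $\fp\not\subseteq\fq$, while the paper splits on $\fq\subseteq\fp$ versus $\fq\not\subseteq\fp$ (handling $\fq\subsetneq\fp$ by regularity of localizations of $R_\fp$ rather than by the unit-ideal trick), and your version dispenses with the paper's preliminary reduction to non-maximal $\fp$.
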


\begin{proof}
	As in Proposition \ref{nor}, the ring $R$ satisfies $(S_2)$.
	Let $\fq\in\Spec(R)$ be of height at most one. We may assume that $\fp$ is not maximal; otherwise the ring is regular. We have two possibilities:
	(i) $\fq\subseteq \fp$, or (ii) $\fq\nsubseteq \fp$.
	
	\begin{enumerate}
		\item[(i)] We localize at $\fp$. Using the assumption $\id_R(\fp)<\infty$, we know that $R_{\fp}$ is regular (by the Auslander--Buchsbaum--Serre theorem). Since $\fq\subseteq \fp$, we have $R_{\fq}\cong (R_{\fp})_{\fq R_{\fp}}$. As a localization of a regular ring is regular, we deduce that $R_{\fq}$ is regular. Hence $R$ satisfies $(R_1)$.
		
		\item[(ii)] Suppose $\fq\nsubseteq\fp$. We claim that $\fp\nsubseteq\fq$. Indeed, if $\fp\subseteq \fq$, then, because $\operatorname{ht}(\fp)\geq 1$ and $\operatorname{ht}(\fq)\leq 1$, we must have $\fp=\fq$. This contradicts the assumption $\fq\nsubseteq\fp$. Therefore $\fp\nsubseteq\fq$, which gives the natural isomorphism $\fp R_{\fq}\cong R_{\fq}$. Since $\id_R(\fp)<\infty$, its localizations also have finite injective dimension. In particular, $\id_{R_{\fq}}(R_{\fq})=\id_{R_{\fq}}(\fp R_{\fq})<\infty$. By definition, $R_{\fq}$ is Gorenstein. Thus $R$ satisfies $(G_1)$.
	\end{enumerate}
	
	In both cases, $R$ is $(G_1)$. It remains to note that a ring satisfying $(S_2)$ and $(G_1)$ is quasi-normal.
\end{proof}

\begin{corollary}\label{pgen}
	Suppose that $\id_R(\fp)<\infty$ for some $\fp\in\Spec(R)$. Then $R$ is quasi-reduced.
\end{corollary}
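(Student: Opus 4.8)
The plan is to reduce the statement to the two propositions already in hand, Proposition \ref{qr} and Proposition \ref{qn}, by dichotomizing on the height of $\fp$. First, since $\id_R(\fp)<\infty$, Bass' conjecture forces $R$ to be Cohen-Macaulay; in particular $R$ is equidimensional, so that $\Ass(R)=\min(R)=\Assh(R)$. I would also record at the outset the elementary implication that quasi-normal rings are quasi-reduced: quasi-normality amounts to $(S_2)$ together with $(G_1)$, whereas quasi-reducedness amounts to $(S_1)$ together with $(G_0)$, and clearly $(S_2)\Rightarrow(S_1)$ and $(G_1)\Rightarrow(G_0)$. The trivial sub-case $\fp=0$, which forces $R$ to be a domain and hence reduced, can be disposed of immediately.

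If $\Ht(\fp)\geq 1$, then $\fp$ has positive height and Proposition \ref{qn} applies verbatim, giving that $R$ is quasi-normal; by the implication just noted, $R$ is then quasi-reduced, which settles this case with no further work.

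The remaining case is $\Ht(\fp)=0$, that is, $\fp\in\min(R)$. Here the key point is to upgrade membership in $\min(R)$ to membership in $\Assh(R)$: because $R$ is Cohen-Macaulay it is equidimensional, so $\dim R/\fq=\dim R$ for every $\fq\in\min(R)$, whence $\fp\in\Assh(R)$. Now Proposition \ref{qr} applies to the prime $\fp\in\Assh(R)$ and yields directly that $R$ is quasi-reduced.

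I expect no serious obstacle, since the corollary is essentially a repackaging of Propositions \ref{qr} and \ref{qn} once the height of $\fp$ is taken into account. The one point demanding a little care is the height-zero case, where one must invoke the equidimensionality of Cohen-Macaulay local rings to see that an arbitrary minimal prime in fact lies in $\Assh(R)$, and not merely in $\min(R)$, so that Proposition \ref{qr} is legitimately applicable.
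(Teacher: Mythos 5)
Your proposal is correct and follows essentially the same route as the paper's own proof: both split according to whether $\fp$ lies in $\Assh(R)$ (equivalently, has height zero, by Cohen--Macaulayness and equidimensionality via Bass' conjecture) and then invoke Proposition \ref{qr} in that case and Proposition \ref{qn} in the positive-height case. Your explicit remarks that quasi-normal implies quasi-reduced and that $\min(R)=\Assh(R)$ for a Cohen--Macaulay local ring are left implicit in the paper but add nothing essentially new.
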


\begin{proof}
	If $\fp\in\Assh(R)$, then the desired claim is the subject of Proposition \ref{qr}. Thus, without loss of generality, we may assume that $\dim(R)>0$ and that $\fp\in\Spec(R)\setminus\Assh(R)$. By Bass'  conjecture, $R$ is Cohen--Macaulay. Since the ring is equidimensional, we have $\operatorname{ht}(\fp)>0$. It remains to apply Proposition \ref{qn}.
\end{proof}

\begin{proposition}
	Let $(R,\fm,k)$ be an excellent UFD containing $k$ and of dimension $>3$. Suppose that $\id_R(\fp)<\infty$ for all $\fp\in\Spec(R)$ of height two. Then $R$ is regular.
\end{proposition}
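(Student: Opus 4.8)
The plan is first to collect the cheap structural consequences of the hypotheses. Since $\dim R>3$ there exist primes of height two, so the hypothesis is non-vacuous, and invoking Bass' conjecture exactly as in the previous proofs, $R$ is Cohen-Macaulay; being a $\UFD$ it is moreover a normal domain, hence equidimensional and catenary (excellence is not yet needed at this stage). The first genuine step is to upgrade the hypothesis to the Serre condition $(R_2)$. For a height-two prime $\fp$ one has $\id_{R_\fp}(\fp R_\fp)<\infty$ by \cite[Proposition 3.1.9]{BH}, so $R_\fp$ is regular by Auslander-Buchsbaum-Serre, exactly as in Proposition \ref{nor}. For a prime $\fq$ of height at most one I would embed it in a height-two prime: since $R$ is Cohen-Macaulay and $\dim R>3$, $\fq$ is not maximal and I can choose $\fp\supseteq\fq$ of height exactly two, whence $R_\fq=(R_\fp)_{\fq R_\fp}$ is a localization of a regular ring and therefore regular. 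Thus $R$ satisfies $(R_2)$.

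The second step is a dévissage that reduces the statement to its top cases. The crucial observation is that the hypothesis is inherited by localizations: if $\fp$ is any prime, then $R_\fp$ is again an excellent $\UFD$ containing $k$, its height-two primes are exactly the $\fq R_\fp$ with $\fq\subseteq\fp$ of height two in $R$, and $\id_{R_\fp}(\fq R_\fp)<\infty$ again by \cite[Proposition 3.1.9]{BH}. Arguing by induction on $\dim R$, for every non-maximal $\fp$ with $\Ht(\fp)>3$ the ring $R_\fp$ is regular by the inductive hypothesis, while $\Ht(\fp)\leq 2$ is covered by $(R_2)$. Consequently the only localizations whose regularity is not yet established are those at primes of height exactly three and at $\fm$ itself. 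In either case the ring in question is an excellent factorial local ring that is regular on its punctured spectrum and still carries all its height-two primes of finite injective dimension, so everything comes down to the essential case of such a ring, in dimension three or in dimension $\dim R$, the three-dimensional case being the one that first delivers $(R_3)$ and hence, through the induction, the isolated-singularity reduction at $\fm$.

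The heart of the matter, and the step I expect to be the main obstacle, is to kill this last singularity, and here it is essential to use the hypothesis globally rather than merely after localization. Indeed, localization alone only ever yields $(R_2)$, and $(R_2)$ does not force regularity: the four-dimensional quadric cone is a Gorenstein $\UFD$ with an isolated singularity, so it satisfies $(R_2)$ while being singular. What rules it out is precisely that $\id_R(\fp)<\infty$ is demanded for every height-two prime as a condition on the \emph{global} module $\fp$: for the quadric cone the generic height-two prime has infinite projective, equivalently by Gorensteinness infinite injective, dimension. The plan therefore is to exploit factoriality together with the finiteness of $\id_R(\fp)$ to extract, from a suitably chosen height-two prime, a resolution datum incompatible with a non-regular point of codimension at least three; concretely I would pass to the completion (legitimate since $R$ is excellent) and use the equicharacteristic structure theory afforded by $k\hookrightarrow R$ to bring the cited theorem of Auslander \cite{baus} to bear, the hypotheses $\dim R>3$ and factorial being exactly what that theorem consumes.

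The delicate points I anticipate are two. First, one must choose the height-two prime so that the \emph{global} finiteness of $\id_R(\fp)$ genuinely propagates regularity up to $\fm$; here Bass' equality $\id_R(\fp)=\depth R$ together with the Auslander-Buchsbaum formula applied to $R/\fp$ should supply the quantitative control, pinning down the projective dimension of the relevant cyclic quotients. Second, one must verify that factoriality survives completion well enough to invoke \cite{baus}, and it is precisely at this point that excellence and the presence of the coefficient field do the real work. If either of these inputs is weakened I expect the conclusion to degrade from regularity to mere Gorensteinness or to $(R_3)$, which is consistent with the sharpness of the dimension hypothesis $\dim R>3$.
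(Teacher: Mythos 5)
Your preparatory steps (Cohen--Macaulayness via Bass, $(R_2)$ by localizing the hypothesis at height-two primes, and the correct observation that the hypothesis must be exploited globally because $(R_2)$ alone is satisfied by singular factorial rings such as the quadric cone) are sound, but the proof stops exactly where the actual argument has to begin. You announce that the remaining singularity will be killed by ``extracting a resolution datum incompatible with a non-regular point'' and by ``bringing Auslander \cite{baus} to bear,'' yet you never produce that datum, and Auslander's theorem is not applicable as you leave things: the form of it used elsewhere in this paper (Proposition \ref{baus}) consumes finiteness of projective dimension for \emph{all} ideals of big-height at most two, whereas your hypothesis concerns only \emph{prime} ideals of height exactly two, and you give no mechanism for passing from one to the other. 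The d\'evissage also leaves the localizations at height-three primes genuinely unhandled: for such $\fp$ one has $\dim R_\fp=3$, so neither the inductive hypothesis (which needs dimension $>3$) nor $(R_2)$ applies, and the ``isolated singularity'' reduction at $\fm$ is therefore not actually achieved.

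The missing bridge, which is the whole content of the paper's proof, runs in the opposite direction: instead of pushing regularity up from below, one pulls finite injective dimension onto $k$. Set $M:=\Syz_2(k)$; it is reflexive and free on the punctured spectrum, and Miller's theorem \cite{MILL} --- this is where excellence, factoriality, the coefficient field and $\dim R>3$ are consumed --- yields a Bourbaki sequence $0\to F\to M\to\fp\to0$ with $F$ free and $\fp$ a \emph{prime} of height two. Murthy's theorem \cite[3.3.19]{BH} makes the Cohen--Macaulay $\UFD$ $R$ Gorenstein, so $\id_R(F)<\infty$; the hypothesis gives $\id_R(\fp)<\infty$; hence $\id_R(M)<\infty$, hence $\id_R(k)<\infty$, and $R$ is regular. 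Without some device of this kind converting the hypothesis on height-two primes into a statement about $k$ (or about all ideals of small height, as Proposition \ref{baus} would require), your outline cannot be completed; identifying where the difficulty lies is not the same as resolving it.
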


\begin{proof}
	It is straightforward to see that $R$ satisfies Serre's conditions $(S_3)$ and $(R_2)$. Let $M:=\operatorname{Syz}_2(k)$. Then $M$ is reflexive and locally free over the punctured spectrum. According to Miller's paper \cite{MILL}, there exist a free module $F$ and a prime ideal $\fp$ of height at most two that fit into the following Bourbaki sequence:
	\[
	0 \longrightarrow F \longrightarrow M \longrightarrow \fp \longrightarrow 0 \qquad (\ast)
	\]
	Recall that $R$ is Cohen--Macaulay. Thanks to a result of Murthy \cite[3.3.19]{BH}, $R$ is Gorenstein. This shows that $\id_R(F)<\infty$. Applying this to $(\ast)$, we see that $\id_R(M)$ is finite. Hence $\id_R(k)<\infty$. By the Auslander--Buchsbaum--Serre theorem, $R$ is regular.
\end{proof}

What about rings of mixed characteristic? More formally, we ask:

\begin{problem}
	How can one find the mixed characteristic analogue of the Bourbaki sequence and of the related prime ideals?\footnote{We are interested in this, even if the ring is regular.}
\end{problem}

If we allow more ideals to have finite injective dimension, then we present the following three positive answers:

\begin{proposition}\label{baus}
	Let $(R,\fm)$ be local. Suppose that $\id_R(\fa)<\infty$ for all ideals of big-height at most two. Then $R$ is regular.
\end{proposition}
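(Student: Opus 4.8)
The plan is to promote the normality that Proposition \ref{nor} already supplies to full regularity by running an argument parallel to the preceding proposition, but with its two special inputs—the $\UFD$ hypothesis (used through Murthy to force Gorensteinness) and the primeness of the Bourbaki ideal—replaced by the single strengthened hypothesis that \emph{every} ideal of big-height at most two has finite injective dimension.

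First I would record the structural consequences. Since some ideal has finite injective dimension, Bass' conjecture gives that $R$ is Cohen-Macaulay, hence $R$ satisfies $(S_i)$ for every $i$. For a prime $\fp$ of height at most two the big-height of $\fp$ is just $\Ht(\fp)\le 2$, so $\id_R(\fp)<\infty$; localizing and invoking Auslander--Buchsbaum--Serre exactly as in Proposition \ref{nor} shows $R_\fp$ is regular. Thus $R$ satisfies $(R_2)$, and in particular $R$ is a normal domain. If $\dim R\le 2$ then every prime has height at most two, so $R_\fp$ is regular for all $\fp$ and $R$ is already regular; from now on I would assume $\dim R\ge 3$.

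Next I would show that $R$ is Gorenstein. Choose an $R$-regular sequence $x,y\in\fm$ (possible since $\depth R=\dim R\ge 2$) and set $\fa:=(x,y)$. By Krull's height theorem every minimal prime of $\fa$ has height at most two, so its big-height is at most two and the hypothesis yields $\id_R(\fa)<\infty$; on the other hand the Koszul complex on $x,y$ gives $\pd_R(\fa)\le 1<\infty$. A nonzero finitely generated module carrying simultaneously finite projective and finite injective dimension forces the ring to be Gorenstein, which is where the cited result of Auslander \cite{baus} enters, so $R$ is Gorenstein. Over a Gorenstein local ring finite injective dimension and finite projective dimension coincide for finitely generated modules, so the hypothesis now reads: $\pd_R(\fa)<\infty$ for every ideal $\fa$ of big-height at most two.

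Finally, for $\dim R\ge 3$ I would imitate the syzygy argument of the previous proposition. Put $M:=\Syz_2(k)$; being a second syzygy over the normal domain $R$ it is reflexive, and by $(R_2)$ it is free in codimension two. A Bourbaki sequence (Miller \cite{MILL}) then presents $M$ as $0\lo F\lo M\lo \fa\lo 0$ with $F$ free and $\fa$ an ideal of big-height at most two, whence $\pd_R(\fa)<\infty$. Since $F$ is free, the sequence gives $\pd_R(M)<\infty$, so $\pd_R(k)=\pd_R(M)+2<\infty$ and $R$ is regular by Auslander--Buchsbaum--Serre. The main obstacle is precisely this last construction: without the $\UFD$ assumption the Bourbaki ideal $\fa$ need not be prime, so one must check that it can nonetheless be arranged to have big-height at most two—this is exactly the point for which the hypothesis was strengthened from ``primes of height two'' to ``all ideals of big-height at most two''—and one must verify that $M=\Syz_2(k)$ meets the local-freeness requirements of Miller's construction using only $(R_2)$ in place of an isolated-singularity assumption.
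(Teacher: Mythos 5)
Your skeleton --- normal, then Gorenstein, then convert finite injective dimension into finite projective dimension, then conclude regularity --- is the same as the paper's, and the first three stages are essentially sound. (Two small remarks there: the paper gets Gorensteinness more cheaply by noting that for a nonzero $x\in\fm$ the principal ideal $xR$ has big-height one, hence $\id_R(xR)<\infty$, while $xR\cong R$ in the domain $R$, so $\id_R(R)<\infty$ directly; and the fact you invoke, that a nonzero finitely generated module of finite projective \emph{and} finite injective dimension forces Gorensteinness, is Foxby's theorem, not anything in Auslander's \cite{baus} --- though the statement itself is true, so this is only a misattribution.)

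The genuine gap is in your last stage, and you flag it yourself. The paper does not run a Bourbaki-sequence argument at all at this point: it simply cites \cite[Corollary 5]{baus}, whose content is precisely that a normal (Cohen--Macaulay) local domain in which every ideal of big-height at most two has finite projective dimension is regular. That is exactly why the proposition is phrased in terms of big-height --- the hypothesis is tailored to feed Auslander's corollary verbatim once the Gorenstein reduction converts $\id$ into $\pd$. Your attempt to rederive this via Miller's construction applied to $M:=\Syz_2(k)$ leaves two things unverified, and neither is routine: (a) a Bourbaki ideal of a reflexive module need not have big-height at most two (it typically has \emph{height} at least two, but may have embedded or high-dimensional components unless one works harder, which is the whole point of Auslander's and Miller's papers); and (b) Miller's theorem producing a prime Bourbaki ideal of small height requires $M$ to be locally free off a small closed set, which for $\Syz_2(k)$ amounts to regularity of $R_\fp$ for all non-maximal $\fp$, not merely $(R_2)$ --- this is why the paper's earlier height-two proposition carries the extra $\UFD$ and excellence hypotheses that are absent here. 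As written, your final step does not close, so the argument is incomplete; replacing it with the direct citation of Auslander's Corollary 5 repairs it and recovers the paper's proof.
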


\begin{proof}
	By Proposition \ref{nor}, $R$ is normal. We assume that $\dim R>0$. Thanks to Bass, $R$ is Cohen--Macaulay. 
	For any nonzero $x\in\fm$, $(xR)$ is height-unmixed and of height one. By assumption, $\id_R(xR)$ is finite. Since $R$ is a domain, $xR\cong R$. Thus $\id(R)<\infty$. In other words, $R$ is Gorenstein. This allows us to assume that $\pd_R(\fa)<\infty$ for every ideal $\fa$ of big-height at most two. This, together with normality and the Cohen--Macaulay property, enables us to apply a result of Auslander \cite[Corollary 5]{baus} and deduce that $R$ is regular.
\end{proof}

\begin{proposition}
	Let $(R,\fm,k)$ be a complete UFD. Suppose that $\id_R(\fa)<\infty$ for every unmixed ideal of height two. Then $R$ is regular.
\end{proposition}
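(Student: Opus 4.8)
The plan is to follow the strategy already used in Proposition \ref{baus}: manoeuvre into a situation where Auslander's criterion \cite[Corollary 5]{baus} applies. First I would invoke Bass' conjecture to see that $R$ is Cohen-Macaulay; being a $\UFD$ it is in particular a normal domain, so it satisfies Serre's condition $(S_n)$ for every $n$, and the normality needed later is automatic. Since every prime of height two is an unmixed ideal of height two, the hypothesis yields $\id_R(\fp)<\infty$ for such $\fp$, and localizing plus Auslander-Buchsbaum-Serre shows $R$ is regular in codimension two; I record this but will not use it directly.

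The first genuine reduction is to extract the Gorenstein property. Exactly as in the height-two case, a Cohen-Macaulay factorial ring is Gorenstein by Murthy's theorem \cite[3.3.19]{BH}, so $R$ is Gorenstein. Over a Gorenstein local ring a finitely generated module has finite injective dimension if and only if it has finite projective dimension; consequently every unmixed ideal of height two already has finite projective dimension.

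The heart of the argument, and the step I expect to be the main obstacle, is to promote ``unmixed of height two'' to ``big-height at most two'', since \cite[Corollary 5]{baus} is phrased for the larger class. Here the $\UFD$ structure is decisive. Given an ideal $\fa$ of big-height at most two, I would factor out the greatest common divisor, writing $\fa=d\fb$ with $\gcd(\fb)=1$. Because in a $\UFD$ the height-one primes are exactly the principal ones, $\gcd(\fb)=1$ forces $\Ht(\fb)\geq2$; and a localization check shows that an associated prime of $R/\fb$ of height $\geq 3$ would force the same prime into $\Ass(R/\fa)$ (distinguishing the cases $d\in\fp$ and $d\notin\fp$, and in the former transporting a witness by multiplication by $d$), contradicting the big-height hypothesis on $\fa$. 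Hence $\fb$ is unmixed of height exactly two, so $\pd_R(\fb)<\infty$. Since $R$ is a domain, multiplication by $d$ is an $R$-isomorphism $\fb\xrightarrow{\ \sim\ }\fa$, whence $\pd_R(\fa)=\pd_R(\fb)<\infty$. Thus \emph{every} ideal of big-height at most two has finite projective dimension.

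With normality, the Cohen-Macaulay property, and finite projective dimension for all ideals of big-height at most two in hand, I would conclude by quoting Auslander \cite[Corollary 5]{baus} that $R$ is regular. The low-dimensional cases $\dim R\leq 1$ (where $R$ is a field or a PID) are regular outright, so one may assume $R$ possesses irreducible elements, which is all the gcd reduction requires. The only delicate point is the localization bookkeeping underlying the claim that $\fb$ is unmixed of height two; I expect this to cause no trouble beyond careful tracking of $\Ass$ under the passage from $\fb$ to $d\fb$.
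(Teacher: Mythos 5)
Your opening moves coincide with the paper's: Bass' conjecture gives Cohen--Macaulayness, Murthy's theorem gives Gorensteinness for a Cohen--Macaulay $\UFD$, and over a Gorenstein ring finite injective dimension converts to finite projective dimension for the unmixed height-two ideals. The divergence, and the problem, is exactly the step you flag as the heart of the argument: promoting ``unmixed of height two'' to ``big-height at most two'' so that \cite[Corollary 5]{baus} applies. Big height bounds only the heights of the \emph{minimal} primes of $\fa$; it says nothing about embedded primes. So although your containment $\Ass(R/\fb)=\Ass(dR/\fa)\subseteq\Ass(R/\fa)$ is correct (and the isomorphism $\fb\cong\fa$ via multiplication by $d$ is fine in a domain), an associated prime of $R/\fb$ of height $\geq 3$ lands in $\Ass(R/\fa)$ without contradicting anything: it may simply be an embedded prime of $\fa$. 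Concretely, in $R=k[[x,y,z]]$ take $\fa=(x,y)\cap(x^2,y^2,z^2)$. Then $\Min(\fa)=\{(x,y)\}$, so $\fa$ has big height two; the elements $x^2,y^2\in\fa$ have no common factor, so your $\fb$ is $\fa$ itself; yet $\fm$ is an embedded prime of $\fa$, so $\fb$ is not unmixed and the hypothesis of the proposition never touches it. The gcd reduction therefore does not produce the class of ideals that Corollary 5 needs, and the proof does not close.

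The repair is what the paper actually does: it does not route through Corollary 5 at all, but through the discussion immediately following it in Auslander's paper, which treats the factorial case and shows that for a Cohen--Macaulay normal (here, $\UFD$) ring it suffices to control the unmixed ideals of height two. That citation replaces your entire third paragraph; everything you wrote before it stands (including the remark that a height-two prime, or a height-two parameter ideal, is unmixed, which is what lets one apply the hypothesis at all and start the argument).
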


\begin{proof}
	We may assume that $\dim(R)>1$. Let $x,y$ be a parameter sequence. Then $(x,y)$ is height-unmixed. By assumption, $\id_R(x,y)<\infty$. Again, Bass'  conjecture implies that the ring is Cohen--Macaulay. By the aforementioned result of Murthy, $R$ is Gorenstein. Hence $\pd_R(\fa)<\infty$ for every unmixed ideal of height two. It remains to apply the discussion below \cite[Corollary 5]{baus}.
\end{proof}
Here is another variation of Proposition \ref{baus}:

\begin{observation}
	Suppose any two-generated ideal of $R$ has finite injective dimension. Then $R$ is a Gorenstein UFD.
\end{observation}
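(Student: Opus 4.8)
The plan is to establish, successively, that $R$ is Cohen-Macaulay, Gorenstein, a normal domain, and finally a unique factorization domain; only the last passage is genuinely delicate. Since some two-generated ideal has finite injective dimension, Bass' conjecture (as in the proof of Proposition~\ref{one}) gives that $R$ is Cohen-Macaulay. If $\dim R=0$, pick $0\neq t$ with $\fm t=0$; then $(t)\cong k$ is principal, so $\id_R(k)=\id_R((t))<\infty$ and Auslander-Buchsbaum-Serre forces $R$ to be a field, settling this case. Assume $\dim R>0$. Then $\depth R=\dim R>0$, so $\fm$ contains a nonzerodivisor $x$; as $(x)\cong R$ is principal we get $\id_R(R)=\id_R((x))<\infty$, i.e.\ $R$ is Gorenstein. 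Over a Gorenstein local ring a finitely generated module has finite injective dimension exactly when it has finite projective dimension (Sharp, Foxby), so the hypothesis now reads: \emph{every two-generated ideal of $R$ has finite projective dimension}.

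Next I would prove normality through Serre's criterion, i.e.\ $(R_1)$ and $(S_2)$; the latter is immediate from Cohen-Macaulayness. For $(R_1)$ fix $\fp\in\Spec(R)$ with $\Ht(\fp)\le 1$ and show $R_\fp$ is regular. If $\Ht(\fp)=0$, apply the dimension-zero argument inside $R_\fp$: lifting a socle generator of $R_\fp$ to a principal ideal of $R$ and localizing shows $\id_{R_\fp}(\kappa(\fp))<\infty$, so $R_\fp$ is a field. If $\Ht(\fp)=1$, suppose $R_\fp$ is not regular. Then $R_\fp$ is not a valuation ring, so not every finitely generated ideal carrying a nonzerodivisor is principal; a general-position reduction then yields a \emph{non-principal} two-generated ideal $\fa\subseteq R_\fp$ containing a nonzerodivisor. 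Since $\depth_{R_\fp}\fa=1=\dim R_\fp$, finite projective dimension of $\fa$ would force (Auslander-Buchsbaum) $\pd_{R_\fp}\fa=0$, hence $\fa$ free of rank one, hence principal---a contradiction; thus $\id_{R_\fp}(\fa)=\infty$. But the two numerators of a generating pair of $\fa$ span a two-generated ideal $I\subseteq R$ with $IR_\fp=\fa$, and $\id_R(I)<\infty$ by hypothesis gives $\id_{R_\fp}(\fa)<\infty$, a contradiction. Hence $R_\fp$ is regular, $(R_1)$ holds, and, $R$ being local, Serre's criterion makes $R$ a normal domain. (This already reproves Propositions~\ref{nor} and \ref{qn} in the present setting.)

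The remaining, and main, obstacle is factoriality: I must show that every height-one prime $\fp$ is principal, equivalently that the divisor class group $\mathrm{Cl}(R)$ vanishes. The local information obtained above only says that each $R_\fp$ is a discrete valuation ring; promoting this to global principality is exactly a class-group statement, and the two-generated hypothesis does not see it directly, since a height-one prime need not be two-generated and a reflexive rank-one module of finite projective dimension is forced to be free only when its depth equals $\dim R$ (e.g.\ in dimension two, or for maximal Cohen-Macaulay witnesses). The route I would pursue is to realize the class $[\fp]$ by a Bourbaki-type sequence $0\lo F\lo M\lo \fp\lo 0$ with $M$ reflexive and locally free on the punctured spectrum, exactly as in the proof of the excellent $\UFD$ proposition above, and to feed the finiteness of injective (equivalently projective) dimension of the two-generated ideals into this sequence to deduce that $M$, hence $\fp$, is free; alternatively one invokes a factoriality criterion for Gorenstein normal domains. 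I expect controlling $M$ when $\dim R$ is large to be the crux.

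Finally, it is worth stressing that the conclusion cannot be strengthened to regularity: a two-dimensional factorial rational double point such as $k[[x,y,z]]/(x^{2}+y^{3}+z^{5})$ is a non-regular Gorenstein $\UFD$ in which every two-generated ideal, after factoring out the greatest common divisor, is generated by a regular sequence and hence has finite projective dimension. Thus the sharp target really is ``Gorenstein $\UFD$'', and the genuine content of the statement is the vanishing of the class group extracted from the two-generated hypothesis.
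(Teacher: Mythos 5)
Your reduction to the Gorenstein case is correct, though more laborious than necessary: the unit ideal $R=(1)$ is itself a (one-, hence two-)generated ideal, so the hypothesis immediately gives $\id_R(R)<\infty$, i.e.\ $R$ is Gorenstein, with no case split on $\dim R$. The passage from finite injective dimension to finite projective dimension over a Gorenstein ring is also the paper's step. The genuine problem is the last third of your argument: you do not prove factoriality. You correctly isolate it as ``the main obstacle,'' but what you offer is a plan (``the route I would pursue is to realize the class $[\fp]$ by a Bourbaki-type sequence\dots I expect controlling $M$ when $\dim R$ is large to be the crux''), not an argument. As written, the proposal establishes that $R$ is a Gorenstein normal domain and then stops short of the conclusion actually claimed in the statement.

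The missing ingredient is exactly the theorem of Buchsbaum--Eisenbud \cite[5.3]{BE}: a Noetherian local ring in which every two-generated ideal has finite projective dimension is factorial (and conversely). The paper's entire proof is: $R=(1)$ gives Gorenstein, finite injective dimension then equals finite projective dimension, and \cite{BE} finishes. So the step you flag as the crux is a known structure theorem rather than something to be rederived via Bourbaki sequences; your proposed route would in effect have to reprove it, and you give no indication of how to force the reflexive module $M$ in your sequence to be free from the two-generated hypothesis. Your closing remark is a worthwhile addition, however: the $E_8$ singularity $k[[x,y,z]]/(x^2+y^3+z^5)$ does show that the conclusion cannot be upgraded to regularity, which is consistent with the ``if and only if'' nature of the Buchsbaum--Eisenbud criterion. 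Your intermediate normality argument via $(R_1)$ and $(S_2)$ is essentially sound (modulo the hand-waved ``general-position reduction'' producing a non-principal two-generated ideal with a nonzerodivisor), but it is subsumed by factoriality and so does not advance the proof.
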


\begin{proof}
	The ring $R$ itself is a two-generated ideal (since $R = (1)$) and therefore has finite injective dimension. Hence $R$ is Gorenstein.\footnote{Alternatively, see \cite[II.5.5]{PS}.}
	Over Gorenstein rings, finite injective dimension implies finite projective dimension (see \cite[3.1.25]{BH}). The result then follows by \cite[5.3]{BE}.
\end{proof}

\section{Modules with few generators and finite injective dimension}

We start with the following easy exercise:

\begin{fact}
	Let $R$ be a local domain. Suppose there is a cyclically presented module $M$ of finite injective dimension. Then $R$ is Gorenstein.
\end{fact}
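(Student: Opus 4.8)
The plan is to identify the cyclically presented module explicitly and then detect finite injective dimension of $R$ itself through Bass numbers. By definition a cyclically presented module has the form $M\cong R/xR$ for some $x\in R$; since we want a genuine nonzero module we may assume $x$ is not a unit. Because $R$ is a domain, there are two cases: either $x=0$, in which case $M\cong R$ and the hypothesis $\id_R(M)<\infty$ says directly that $R$ is Gorenstein; or $x\neq0$, in which case $x\in\fm$ is a nonzerodivisor. So the substantive case is $0\neq x\in\fm$, and here the multiplication map $x\colon R\to R$ is injective, yielding the short exact sequence
$$0\lo R\stackrel{x}{\lo} R\lo R/xR\lo 0.$$

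Next I would apply $\Ext^\bullet_R(k,-)$ to this sequence and exploit the fact that $x\in\fm=\Ann_R(k)$ acts as zero on each module $\Ext^i_R(k,R)$ (the $R$-action on an Ext module may be computed through the first argument, and $\Ann_R(k)$ annihilates it). Consequently every connecting map induced by multiplication by $x$ vanishes, and the long exact sequence splits into the short exact pieces
$$0\lo \Ext^i_R(k,R)\lo \Ext^i_R(k,R/xR)\lo \Ext^{i+1}_R(k,R)\lo 0$$
for every $i$.

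Finally I would invoke Bass' characterization of finite injective dimension: for a finitely generated module $N$ over the noetherian local ring $R$, one has $\id_R(N)<\infty$ if and only if $\Ext^i_R(k,N)=0$ for all $i\gg0$. Since $\id_R(R/xR)<\infty$, the middle term $\Ext^i_R(k,R/xR)$ vanishes for $i\gg0$; feeding this into the short exact sequences above forces $\Ext^i_R(k,R)=0$ for all large $i$, hence $\id_R(R)<\infty$, that is, $R$ is Gorenstein.

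The argument is short, and I do not expect a genuine obstacle; the one point requiring care is the vanishing of multiplication by $x$ on $\Ext^i_R(k,R)$, which is precisely what makes the long exact sequence break into the split pieces. An alternative, equally brief route avoids Ext altogether: since $x$ is a nonzerodivisor and $R/xR$ is an $(R/xR)$-module, the change-of-rings formula $\id_R(R/xR)=\id_{R/xR}(R/xR)+1$ shows $R/xR$ is Gorenstein, after which Gorensteinness ascends along the regular element $x$ to give that $R$ is Gorenstein.
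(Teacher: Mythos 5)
Your proposal is correct and follows essentially the same route as the paper: the presentation $0\lo R\stackrel{x}{\lo}R\lo M\lo 0$, application of $\Ext^{\bullet}_R(k,-)$, and the conclusion $\Ext^i_R(k,R)=0$ for $i\gg 0$, hence $\id_R(R)<\infty$. The only cosmetic difference is that you note $x\in\fm$ acts as zero on $\Ext^i_R(k,R)$ while the paper kills the surjection $\Ext^{i+1}_R(k,R)\stackrel{x}{\lo}\Ext^{i+1}_R(k,R)$ by Nakayama; your explicit handling of the degenerate cases ($x=0$ or a unit) and the change-of-rings alternative are welcome but not substantively different.
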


\begin{proof}
	Let $M = R/xR$ for some $x$. Set $i := \id_R(M) < \infty$ and consider the exact sequence $0 \to R \xrightarrow{x} R \to M \to 0$. This yields
	\[
	\Ext_R^{i+1}(k,R) \xrightarrow{x} \Ext_R^{i+1}(k,R) \longrightarrow \Ext_R^{i+1}(k,M) = 0.
	\]
	By Nakayama's lemma, $\Ext_R^{i+1}(k,R) = 0$, and hence $\id(R) < \infty$. By definition, $R$ is Gorenstein.
\end{proof}

The following is nontrivial:

\begin{fact}(Peskine--Szpiro).\label{cy}
	Suppose there is a cyclic module $M$ of finite injective dimension. Then $R$ is Gorenstein.
\end{fact}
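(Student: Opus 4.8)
The plan is to invoke Bass' conjecture to land in the Cohen-Macaulay world and then induct on $d:=\dim R$. Since $M=R/I$ is finitely generated of finite injective dimension, $R$ is Cohen-Macaulay and the Bass equality gives $\id_R(M)=\depth R=d$. Injective dimension, the cyclic shape $R/I$, and the Gorenstein property are all unchanged under $\fm$-adic completion, so I would assume $R$ complete; this is convenient because it guarantees a canonical module and lets me use Matlis duality $(-)^\vee:=\Hom_R(-,\E_R(k))$ freely.

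For the base case $d=0$ the ring is Artinian, so $\id_R(M)=0$ and $M$ is injective. A finitely generated injective module over an Artinian local ring is a finite direct sum of copies of $\E_R(k)$, say $M\cong \E_R(k)^n$. Dualizing, $M^\vee\cong R^n$; but $M$ cyclic forces $M^\vee$ to be cocyclic, i.e.\ to have simple socle, while $\operatorname{socle}(R^n)$ has $k$-dimension $n\cdot r$, where $r$ is the type of $R$. Hence $nr=1$, so $n=1$ and $r=1$, which is exactly the Gorenstein property. This small computation is the conceptual heart: cyclicity of $M$ becomes a simple-socle condition on the dual, and a simple socle pins the type to $1$.

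For the inductive step, suppose $d\geq 1$ and $\depth_R(M)>0$, that is $\fm\notin\Ass(R/I)$. Then $\fm$ avoids the finitely many primes in $\Ass(R)\cup\Ass(M)$, so by prime avoidance there is $x\in\fm$ that is a nonzerodivisor on both $R$ and $M$. The standard change-of-rings formula gives $\id_{R/xR}(M/xM)=\id_R(M)-1<\infty$, while $M/xM=(R/xR)/I(R/xR)$ is again cyclic over the Cohen-Macaulay local ring $R/xR$ of dimension $d-1$. By induction $R/xR$ is Gorenstein, and since $x$ is $R$-regular this ascends to give $R$ Gorenstein.

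The one case this does not reach, and where I expect the real difficulty, is $\depth_R(M)=0$, i.e.\ $\fm\in\Ass(R/I)$: now no element of $\fm$ is regular on $M$, so the dimension reduction is unavailable. My proposal is to settle it by the same duality principle that handled the base case. Passing through the normalized dualizing complex $\omega^\bullet$ of the complete Cohen-Macaulay ring $R$, finiteness of $\id_R(M)$ is equivalent to $N:=\RHom_R(M,\omega^\bullet)$ being a perfect complex, and biduality recovers $M$ from $N$; under this correspondence the single generator of $M$ dualizes to a cocyclic (simple-socle) constraint, and the target conclusion becomes that the type $\dim_k\Ext^d_R(k,R)$ is forced to be $1$. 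Concretely I would compute the top Bass number $\mu^d(\fm,M)=\dim_k\Ext^d_R(k,M)$, noting that at the top spot only $\E_R(k)$ can occur since for $\fp\neq\fm$ one has $\id_{R_\fp}(M_\fp)=\depth R_\fp=\Ht\fp<d$, and then compare it with the type of $R$ through local duality, cyclicity of $M$ collapsing the comparison to $r=1$. Making this depth-zero comparison precise, equivalently showing that a module of finite projective dimension with simple socle forces Gorensteinness, is the crux; everything else is bookkeeping.
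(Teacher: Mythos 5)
Your base case and your positive-depth inductive step are both correct, but the argument is not complete: the case $\depth_R(R/I)=0$ with $\dim R\geq 1$ is left as an acknowledged sketch, and that case is not a fringe phenomenon. It occurs for every $\fm$-primary ideal $I$ (already for $I=\fm$ over a regular local ring), so your induction never reaches the main body of examples. The duality principle you gesture at there is the right one, but the step you yourself call ``the crux'' is essentially the entire content of the theorem, and as written it is not established: what you need is a precise identity tying the top Bass number of $M$ to the type of $R$ and the number of generators of $M$.

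That identity exists and closes the gap immediately. For a system of parameters $\underline{x}$ of the (complete, Cohen--Macaulay) ring $R$ one has $\Ext^d_R(\Hom_R(k,R/\underline{x}R),M)\cong k\otimes_R M/\underline{x}M$ by Peskine--Szpiro [PS, II.5.4]; since $\Hom_R(k,R/\underline{x}R)\cong k^{r(R)}$, this yields $\mu_d(M)\,r(R)=\mu(M)$. For a nonzero cyclic $M$ the right-hand side is $1$, forcing $r(R)=1$, i.e.\ $R$ Gorenstein --- in one stroke, with no induction and no case split on depth. The paper gives no proof of this Fact (it cites Peskine--Szpiro), but it runs exactly this computation in Section 3 for the two-generator Proposition, where the same identity gives $\mu_d(M)r(R)=2$; your Artinian base case is the $d=0$ instance of it, recovered by hand via Matlis duality. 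So the architecture (complete, reduce dimension by killing a regular element) is sound where it applies, but the unhandled depth-zero case is where the theorem lives, and closing it requires importing or reproving the II.5.4 identity --- at which point the induction becomes unnecessary.
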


\begin{corollary}\label{gd}
	Suppose $R$ is Gorenstein. If for some $\fp\in\Spec(R)$ one has $\id_R(\fp)<\infty$, then $R$ is an integral domain.
\end{corollary}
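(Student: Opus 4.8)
The plan is to reduce the statement to the projective–dimension version recalled at the very beginning of the introduction, exploiting the extra hypothesis that $R$ is Gorenstein. The pivotal point is that over a Gorenstein local ring the two finiteness conditions on homological dimensions coincide for finitely generated modules. Concretely, I would first invoke \cite[3.1.25]{BH}: since $R$ is Gorenstein and $\fp$ is a finitely generated $R$-module with $\id_R(\fp)<\infty$, that result converts the injective–dimension assumption into the projective one, giving $\pd_R(\fp)<\infty$. This is the only place where the Gorenstein hypothesis is used.

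Once finite projective dimension is in hand, the conclusion is immediate. From the short exact sequence $0\to\fp\to R\to R/\fp\to 0$ and $\pd_R(R)=0$ one reads off the equivalence $\pd_R(\fp)<\infty \Leftrightarrow \pd_R(R/\fp)<\infty$, so both hold. Now \cite[Corollary II.3.3]{PS}, stated in the introduction, asserts precisely that the existence of a prime ideal of finite projective dimension forces $R$ to be an integral domain. Applying it finishes the proof.

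Honestly, there is no serious obstacle here: the entire content is carried by the two cited results, and the Gorenstein hypothesis serves merely as the bridge between $\id$ and $\pd$. The step most worth isolating is the conversion $\id_R(\fp)<\infty \Rightarrow \pd_R(\fp)<\infty$, since without it Peskine–Szpiro's theorem does not apply. I would note in passing that one cannot instead route the argument through Proposition \ref{one}, because that proposition requires $\min(R)$ to be a singleton, which is not automatic over a Gorenstein ring (for instance $k[[x,y]]/(xy)$); the direct conversion-plus-Peskine–Szpiro approach is therefore the cleaner one.
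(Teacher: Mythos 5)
Your proposal is correct and matches the paper's own (very terse) proof: the paper likewise converts $\id_R(\fp)<\infty$ to $\pd_R(\fp)<\infty$ via the Gorenstein hypothesis and then invokes the Peskine--Szpiro result from the introduction. You have simply written out the details the paper leaves implicit.
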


\begin{proof}
	We have $\pd_R(\fp)<\infty$ (since over a Gorenstein ring, finite injective dimension implies finite projective dimension). Then $R$ is an integral domain by Proposition \ref{one} (or by the argument therein).
\end{proof}

\begin{corollary}
	Suppose that for some principal prime ideal $\fp\in\Spec(R)$ we have $\id_R(\fp)<\infty$. Then $R$ is an integral domain.
\end{corollary}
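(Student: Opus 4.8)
The plan is to reduce the statement to the already-established Corollary \ref{gd} by first showing that $R$ is Gorenstein. As a preliminary step I would invoke Bass' conjecture: the hypothesis $\id_R(\fp)<\infty$ forces $R$ to be Cohen-Macaulay, whence $\Ass(R)=\Min(R)$ and $R$ is equidimensional. Write $\fp=xR$ for some $x\in R$. If $x=0$, then $\fp=0$ is prime and $R$ is already a domain, so I may assume $x\neq 0$ (in particular $x$ is a nonunit, since $\fp$ is a proper ideal).

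The heart of the argument is to check that $x$ is a non-zerodivisor. Since $\fp=xR$ is a nonzero proper prime ideal, Krull's principal ideal theorem gives $\Ht(\fp)\leq 1$, while $\fp\neq 0$ forces $\Ht(\fp)\geq 1$; thus $\Ht(\fp)=1$. Suppose toward a contradiction that $x$ were a zerodivisor. Then $x$ lies in some $\fq\in\Ass(R)=\Min(R)$, so $\fp=xR\subseteq\fq$. As $\fq$ is a minimal prime and $\fp$ is prime, this forces $\fp=\fq$; but then $\Ht(\fp)=\Ht(\fq)=0$, contradicting $\Ht(\fp)=1$. Hence $x$ is a non-zerodivisor.

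With $x$ a non-zerodivisor, multiplication by $x$ yields an isomorphism $R\cong xR=\fp$ of $R$-modules, so $\id_R(R)=\id_R(\fp)<\infty$; that is, $R$ is Gorenstein. At this point I would apply Corollary \ref{gd}: a Gorenstein ring admitting a prime ideal of finite injective dimension is a domain, which is precisely our situation, and the claim follows.

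I expect the only genuine step to be the non-zerodivisor verification; everything else is formal once the Cohen-Macaulay property is in hand. The delicate point is ruling out that the principal prime $\fp$ could itself be a minimal prime, and this is exactly what the height computation via the principal ideal theorem settles, using that a nonzero principal prime has height exactly one while minimal primes of a Cohen-Macaulay local ring have height zero.
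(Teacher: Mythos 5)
There is a genuine gap in the non-zerodivisor verification, which is the step you yourself identify as the heart of the argument. You assert that ``$\fp\neq 0$ forces $\Ht(\fp)\geq 1$,'' but a nonzero prime ideal of a Cohen--Macaulay local ring can perfectly well be a minimal prime, i.e.\ have height zero, even when it is principal: in $R=k[[x,y]]/(xy)$ the ideal $(x)$ is a nonzero principal prime of height $0$, and its generator $x$ is a zerodivisor (so $(x)\not\cong R$ there). Ruling out that your $\fp=xR$ is a nonzero minimal prime is exactly the content of the statement, and it cannot be done from the Cohen--Macaulay property alone --- your argument never uses $\id_R(\fp)<\infty$ between the invocation of Bass' conjecture and the final isomorphism $\fp\cong R$, so it would ``prove'' that every nonzero principal prime in a CM local ring is generated by a non-zerodivisor, which the example above refutes. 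The hypothesis must enter here, e.g.\ by localizing at $\fp$ when $\fp\in\Min(R)$ and arguing that $\id_{R_\fp}(\fp R_\fp)<\infty$ over the artinian local ring $R_\fp$ forces $R_\fp$ to be regular, hence a field, and then deriving a contradiction with $\fp\neq 0$ (in the spirit of Proposition \ref{one} / Proposition \ref{qr}).

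The paper sidesteps this entirely: a principal ideal is a cyclic module ($\fp=xR\cong R/(0:x)$), so Fact \ref{cy} (Peskine--Szpiro: a cyclic module of finite injective dimension forces Gorenstein) gives $\id_R(R)<\infty$ in one line, and then Corollary \ref{gd} finishes exactly as you intend. So your overall architecture (first Gorenstein, then Corollary \ref{gd}) matches the paper's, but your route to Gorensteinness --- trying to show $\fp\cong R$ directly --- is the part that breaks. Either adopt the Fact \ref{cy} shortcut, or supply the missing localization argument for the minimal-prime case.
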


\begin{proof}
	By Fact \ref{cy}, $R$ is Gorenstein. Then Corollary \ref{gd} implies that $R$ is an integral domain.
\end{proof}

Here we present a new proof of \cite[Theorem 5.2]{v1} and \cite[Theorem 2.11(iv)]{sha}.

\begin{corollary}
	Let $I \lhd R$ be nonzero. If $\id_R(R/I) < \infty$, then $I$ contains an $R$-regular element.
\end{corollary}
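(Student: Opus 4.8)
The plan is to reduce the statement to one about modules of finite projective dimension and then exploit the constancy of the generic rank. Throughout I may assume $I$ is proper, since if $I=R$ then $1\in I$ is already a regular element. As $R/I$ is a nonzero cyclic module of finite injective dimension, Fact~\ref{cy} (Peskine--Szpiro) applies and shows that $R$ is Gorenstein. Over a Gorenstein local ring finite injective dimension is equivalent to finite projective dimension by \cite[3.1.25]{BH}, so $\pd_R(R/I)<\infty$, and hence $\pd_R(I)<\infty$ as well. In particular $R$ is Cohen--Macaulay, so that $\Ass(R)=\Min(R)$ and $R$ has no embedded primes; this absence of embedded primes is the structural input I will use.

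Next I would pass to a finite free resolution $F_\bullet\lo I$ and form the Euler characteristic $\chi:=\sum_i(-1)^i\operatorname{rank}F_i$. The main point is that $\chi$ computes the rank of $I$ at every minimal prime simultaneously: for $\fp\in\Min(R)$ the localization $R_\fp$ is Artinian, the complex $(F_\bullet)_\fp$ is a finite free resolution of $I_\fp$, and a finitely generated module of finite projective dimension over an Artinian local ring is free (by the Auslander--Buchsbaum formula, since $\depth R_\fp=0$). Comparing lengths along the exact complex $(F_\bullet)_\fp$ then forces $I_\fp$ to be free of rank exactly $\chi$, independently of $\fp$. Since $I\subseteq R$, each $I_\fp$ embeds in $R_\fp$, so a length count gives $\chi\le 1$, with $\chi=1$ precisely when $I_\fp=R_\fp$.

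To finish, I would observe that $I\neq 0$ forces $\chi\geq 1$: indeed $\Ass(I)\subseteq\Ass(R)=\Min(R)$ is nonempty, so $I_\fp\neq 0$ for some minimal prime $\fp$, whence $\chi=\operatorname{rank}_\fp I\geq 1$. Combined with $\chi\le 1$ this gives $\chi=1$, and therefore $I_\fp=R_\fp$ for every $\fp\in\Min(R)=\Ass(R)$; equivalently $I\not\subseteq\fp$ for each associated prime of $R$. By prime avoidance $I\not\subseteq\Zd(R)=\bigcup_{\fp\in\Ass(R)}\fp$, so $I$ contains an element that is a nonzerodivisor, as desired.

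The step I expect to be the crux is the constancy of the generic rank in the second paragraph. The naive idea of localizing at a single minimal prime containing $I$ is useless: it only yields $I_\fp=0$ there and produces no contradiction. What genuinely rules out $I$ lying in a minimal prime is that finite projective dimension propagates a single nonzero localization to all minimal primes through the Euler characteristic. The points to verify carefully are the length bookkeeping over the Artinian rings $R_\fp$ and the inclusion $\Ass(I)\subseteq\Min(R)$, which is exactly where Cohen--Macaulayness (the absence of embedded primes) is used.
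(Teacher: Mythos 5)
Your proof is correct and follows the same route as the paper: Fact~\ref{cy} (Peskine--Szpiro) gives that $R$ is Gorenstein, whence $\pd_R(R/I)<\infty$ by \cite[3.1.25]{BH} and $I$ admits a finite free resolution. The only difference is in the last step, where the paper simply cites \cite[Corollary 1.4.7]{BH} to conclude that a nonzero ideal with a finite free resolution contains a regular element, while you prove that statement from scratch via the Euler-characteristic and length count at the minimal primes --- a correct, self-contained rendering of the cited result (your appeal to Cohen--Macaulayness to get $\Ass(R)=\Min(R)$ is harmless here, though the cited result holds over any Noetherian ring by localizing at associated primes, where $\depth R_{\fp}=0$ still forces $I_{\fp}$ to be free).
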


\begin{proof}
	By Fact \ref{cy}, $R$ is Gorenstein. By \cite[3.1.25]{BH}, $\pd_R(R/I) < \infty$. Equivalently, $I$ has a finite free resolution. By a famous result of Burch \cite[Corollary 1.4.7]{BH}, $I$ contains a regular element.
\end{proof}

\begin{notation}
	By $\mu(-)$ we denote the minimal number of generators of a finitely generated module $(-)$.
\end{notation}
		\begin{fact}\label{fst} (See  \cite[Section 5.3]{finitsup}). Let  $(R,\fm)$ be a  generically Gorenstein  Cohen-Macaulay local ring  possessing a canonical module.  Suppose $R$  is  of type at most two. Then $\Ext^1_R(\omega_R,R) =0$
	if and only if $R$ is Gorenstein. 
\end{fact}
In Section 7, we extend the next result by removing the quasi-normality hypothesis.

\begin{proposition}
	Let $(R,\fm)$ be quasi-normal, and let $M$ be such that
	\begin{enumerate}
		\item[(i)] $\id_R(M) < \infty$,
		\item[(ii)] $M$ is reflexive,
		\item[(iii)] $\mu(M) \leq 2$.
	\end{enumerate}
	Then $R$ is Gorenstein.
\end{proposition}

\begin{proof}
	By Fact \ref{cy} we may assume that $M$ is not cyclic. In light of Bass'  conjecture, $R$ is Cohen--Macaulay. Let $d := \dim(R)$, and let $\underline{x} := x_1,\ldots,x_d$ be a system of parameters. In light of \cite[II.5.4]{PS}, we have
	\[
	\Ext_R^d\bigl(\Hom_R(k,R/\underline{x}R), M\bigr) \cong k \otimes_R \frac{M}{\underline{x}M} \cong \frac{M}{\fm M}.
	\]
	Let $\type(R)$ denote the type of $R$. From this we deduce that $\displaystyle \bigoplus_{\type(R)} \Ext_R^d(k,M) \cong k \oplus k$. Let $\mu_i(M)$ denote the $i$th Bass number of $M$, i.e., the number of copies of $E_R(k)$ in a minimal injective resolution of $M$. In this terminology, we obtain $\mu_d(M) \cdot \type(R) = 2$.
	
	If $\type(R) = 1$, there is nothing to prove, as rings of type one are Gorenstein. Thus, without loss of generality, $\type(R) = 2$. This implies that $\mu_d(M) = 1$. Since $M$ is torsion-free, it has maximal dimension. Then, in view of \cite[3.4.c]{fo}, we observe that $M$ is maximal Cohen--Macaulay. Without loss of generality, we may assume that the ring is complete. Over a Cohen--Macaulay and complete ring, the canonical module exists and is unique; we denote it by $\omega_R$. Since $M$ is maximal Cohen--Macaulay and of finite injective dimension, we use a result of Sharp \cite{sha} to obtain the identification $M \cong \bigoplus \omega_R$ (see also \cite[3.3.28]{BH}). Since $\mu(M) = 2$ and $\mu(\omega_R) = \type(R) = 2$, we deduce that $M \cong \omega_R$.
	
Since $R$ is generically Gorenstein, we may assume $\omega_R$ is an ideal of $R$.	Let $(-)^\ast$ denote $\Hom_R(-,R)$. Thanks to \cite[Appendix]{hh}, there is an exact sequence
	\[
	0 \longrightarrow \omega^\ast \longrightarrow R^2 \longrightarrow \omega_R \longrightarrow 0 \qquad (\ast)
	\]
	Recall that $\Ext_R^1(R^2,R) = 0$. Applying $\Hom_R(-,R)$ to the short exact sequence $(\ast)$ yields the following commutative diagram:
	\[
	\begin{CD}
	0 @>>> \omega^\ast @>>> R^2 @>>> \omega^{\ast\ast} @>>> \Ext_R^1(\omega_R,R) @>>> 0 \\
	@. @V{\cong}VV @V{\cong}VV @V{\cong}VV @. \\
	0 @>>> \omega^\ast @>>> R^2 @>>> \omega @>>> 0\\
\end{CD}
	\]
	From this, we conclude that $\Ext_R^1(\omega_R,R) = 0$. Now, we apply {Fact} \ref{fst}  to deduce  $R$ is Gorenstein, as desired claimed.
\end{proof}

The above proof can be deduced from the following nice observation:

\begin{observation}
	Suppose $N$ is finitely generated and of finite injective dimension. Then $\mu(N) \geq \operatorname{type}(R)$.
\end{observation}

\begin{proof}
	The ring $R$ is Cohen--Macaulay, and we may assume it is complete. Consider the maximal Cohen--Macaulay module $M := R$. By \cite[Corollary 3.15]{celp}, there is a surjection $M^{\oplus \mu(N)} \to \omega_R \to 0$. It remains to note from \cite[3.3.11(c)]{BH} that $\mu(\omega_R) = \type(R)$.
\end{proof}
\begin{proposition}\label{pqn}
	Let $(R,\fm)$ be quasi-normal of type at most two, and let $M$ be an indecomposable module such that
	\begin{enumerate}
		\item[(i)] $\id_R(M) < \infty$,
		\item[(ii)] $M$ is reflexive,
		\item[(iii)] $\mu(M) \leq 4$.
	\end{enumerate}
	Then $R$ is Gorenstein.
\end{proposition}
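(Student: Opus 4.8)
The plan is to reduce to the previous proposition by bounding the number of generators of the relevant pieces. By Fact \ref{cy} we may assume $M$ is not cyclic, and in light of Bass' conjecture $R$ is Cohen-Macaulay; we may pass to the complete case so that the canonical module $\omega_R$ exists. As in the proof of Proposition \ref{1.1}, the computation via \cite[II.5.4]{PS} gives $\mu_d(M)\,r(R)=\mu(M)$ where $d=\dim R$. Since $r(R)\leq 2$ and $\mu(M)\leq 4$, the only possibilities are $\mu_d(M)\in\{1,2\}$, so that $M$ has a small top Bass number; in particular $\mu_d(M)\,r(R)\leq 4$.

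First I would show $M$ is maximal Cohen-Macaulay of finite injective dimension, exactly as before: reflexivity forces $M$ to be torsion-free of maximal dimension, and \cite[3.4.c]{fo} together with the smallness of $\mu_d(M)$ promotes this to the maximal Cohen-Macaulay property. Then Sharp's theorem \cite{sha} identifies $M$ with a direct sum of copies of $\omega_R$, say $M\cong\omega_R^{\oplus n}$. Comparing generators, $\mu(M)=n\,\mu(\omega_R)=n\,r(R)$, and since $\mu(M)\leq 4$ and $r(R)\in\{1,2\}$ this bounds $n$. The indecomposability hypothesis is the key extra ingredient here: because $\omega_R$ is indecomposable and $M\cong\omega_R^{\oplus n}$, the indecomposability of $M$ forces $n=1$, so in fact $M\cong\omega_R$.

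Having reduced to $M\cong\omega_R$, I would finish exactly as in Proposition \ref{1.1}: invoke the presentation $0\lo\omega_R^\ast\lo R^2\lo\omega_R\lo 0$ from \cite[Appendix]{hh} — available because $\mu(\omega_R)=r(R)\leq 2$, so $\omega_R$ is generated by at most two elements — apply $\Hom_R(-,R)$, use $\Ext^1_R(R^2,R)=0$ and the reflexivity identification $\omega_R^{\ast\ast}\cong\omega_R$ to read off $\Ext^1_R(\omega_R,R)=0$, and then conclude Gorensteinness via \cite[5.3]{finitsup}. The case $r(R)=1$ is immediate since type-one rings are already Gorenstein.

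The main obstacle I anticipate is controlling the possibility $n=2$ with $r(R)=2$, which would give $\mu(M)=4$ consistent with all numerical hypotheses; this is precisely where indecomposability must be used to exclude $M\cong\omega_R\oplus\omega_R$ and pin down $M\cong\omega_R$. A secondary subtlety is verifying that the generator count $\mu(M)\leq 4$ together with $r(R)\leq 2$ is genuinely compatible with the maximal Cohen-Macaulay deduction from \cite[3.4.c]{fo}, i.e.\ that the top Bass number stays small enough; but once $M\cong\omega_R^{\oplus n}$ is established the argument is forced, so the entire difficulty concentrates in the reduction step to a single copy of the canonical module.
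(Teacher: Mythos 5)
Your reduction ($r(R)=2$, the identity $\mu_d(M)\,r(R)=\mu(M)$, Sharp's identification $M\cong\omega_R^{\oplus n}$, indecomposability forcing $n=1$) follows the paper's line, and your endgame is a legitimate variant: the paper finishes by the numerical contradiction $4=\mu(M)=\mu(\omega_R)=r(R)=2$, whereas you rerun the Hanes--Huneke presentation of $\omega_R$ from Proposition \ref{1.1}; either works once $M\cong\omega_R$ is in hand.

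The genuine gap is the step where you promote $M$ to maximal Cohen--Macaulay. You invoke \cite[3.4.c]{fo} ``together with the smallness of $\mu_d(M)$,'' but that result (compare \cite[Proposition 3.8]{fo}, used later in the paper to show $\mu_2(I)\geq 2$ for a non-Cohen--Macaulay ideal) only yields Cohen--Macaulayness when the top Bass number equals $1$: a module of maximal dimension that is not Cohen--Macaulay merely satisfies $\mu_{\dim M}(M)\geq 2$. In the one case your proposition adds beyond Proposition \ref{1.1} --- namely $r(R)=2$ and $\mu(M)=4$, hence $\mu_d(M)=2$ --- Foxby's bound is consistent with $M$ failing to be Cohen--Macaulay, so the deduction does not close. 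This is precisely the point you flag as a ``secondary subtlety,'' but it is in fact the main new difficulty, and the paper resolves it with a different tool: reflexivity over the quasi-normal ring $R$ gives that $M$ satisfies $(S_2)$, and then Aoyama's theorem \cite[Theorem 3]{Ao}, not Foxby, yields that $M$ is Cohen--Macaulay and hence maximal Cohen--Macaulay. Without this step (or some substitute) Sharp's identification $M\cong\omega_R^{\oplus n}$ is unavailable and the rest of your argument, including the use of indecomposability, cannot start.
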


\begin{proof}
	We may assume that $\mu(M) > 3$. Recall from (i) that $R$ is Cohen--Macaulay. Suppose, for contradiction, that $R$ is not Gorenstein. Then we may assume that $\type( R) = 2$. By the above argument,
	\[
	2\mu_d(M) = \mu_d(M) \cdot \type( R) = \mu(M).
	\]
	Hence $\mu(M)$ is even. In fact, $\mu(M) = 4$ and so $\mu_d(M) = 2$. Recall that $M$ satisfies $(S_2)$. According to \cite[Theorem 3]{Ao}, $M$ is Cohen--Macaulay, and hence maximal Cohen--Macaulay. Since $M$ has finite injective dimension, $M \cong \bigoplus \omega_R$. Because $M$ is indecomposable, $M \cong \omega_R$. But then $4 = \mu(M) = \mu(\omega_R) = \type(R) = 2$, a contradiction. This completes the proof.
\end{proof}

\begin{corollary}
	Let $(R,\fm)$ be quasi-normal of type two, and let $M$ be a module such that $\id_R(M) < \infty$. Then $\mu(M)$ is even.
\end{corollary}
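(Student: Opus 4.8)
The plan is to reuse, essentially verbatim, the numerical identity extracted in the proof of Proposition \ref{1.1}; the key observation is that this identity holds for \emph{any} module of finite injective dimension and uses neither reflexivity nor a bound on the number of generators. Concretely, since $\id_R(M)<\infty$, Bass' conjecture forces $R$ to be Cohen-Macaulay. Write $d:=\dim(R)$ and choose a system of parameters $\underline{x}:=x_1,\ldots,x_d$. The Peskine--Szpiro formula \cite[II.5.4]{PS} then yields
$$\Ext^d_R(\Hom_R(k,R/\underline{x}R),M)\cong k\otimes_R \frac{M}{\underline{x}M}\cong \frac{M}{\fm M},$$
the second isomorphism being automatic because $\underline{x}\subseteq\fm$.

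Next I would identify both sides as $k$-vector spaces. On the left, $\Hom_R(k,R/\underline{x}R)$ is the socle of the Artinian ring $R/\underline{x}R$, whose $k$-dimension is the type $r(R)$ (the type is unaffected by killing a regular sequence); hence the left-hand side is $\bigoplus_{r(R)}\Ext^d_R(k,M)$, of dimension $r(R)\,\mu_d(M)$, where $\mu_d(M)$ denotes the top Bass number of $M$. On the right, $M/\fm M\cong k^{\mu(M)}$. Comparing $k$-dimensions gives the identity
$$\mu(M)=r(R)\,\mu_d(M).$$

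To conclude, I would substitute the hypothesis $r(R)=2$, obtaining $\mu(M)=2\,\mu_d(M)$, which is manifestly even (the case $M=0$ being trivial). I do not expect a genuine obstacle: the entire content is carried by the displayed identity, and the only point deserving care is that this identity is valid for an arbitrary module of finite injective dimension. The additional hypotheses appearing in Proposition \ref{1.1}, namely reflexivity and $\mu(M)\le 2$, are invoked there only to pin $M$ down as the canonical module, and they play no role in the present parity statement.
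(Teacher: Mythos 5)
Your proof is correct and is exactly the argument the paper intends: the corollary is stated without proof, but the preceding proposition's proof already records the identity $\mu_d(M)\,r(R)=\mu(M)$ via the same Peskine--Szpiro formula, and you correctly observe that this identity needs only $\id_R(M)<\infty$ (plus Cohen--Macaulayness from Bass' conjecture), not reflexivity or a generator bound. Substituting $r(R)=2$ gives the parity claim, as you say.
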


\begin{remark}
	In the previous three results, we may replace the quasi-normal assumption with the integral domain assumption; see \cite[Corollary 5.11]{finitsup}.
\end{remark}
The following is dual to \cite[6.2.4]{int1} and is clear from Bass'  conjecture:

\begin{observation}
	Suppose there is a finite length module $A$ of finite injective dimension. Then $R$ is Cohen--Macaulay.
\end{observation}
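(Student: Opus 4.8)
The plan is to prove the dual statement: the existence of a finite length module $A$ of finite injective dimension forces $R$ to be Cohen-Macaulay. Since this is declared dual to \cite[6.2.4]{int1} (which presumably asserts that a finite length module of finite \emph{projective} dimension forces Cohen-Macaulayness, in the spirit of the New Intersection Theorem and Bass' conjecture), the natural route is to convert the injective-dimension hypothesis into a statement about depth via the Bass number formula.

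First I would recall that for a finitely generated module $A$ with $\id_R(A) < \infty$, the Bass formula gives $\id_R(A) = \depth(R)$; this is the injective analogue of Auslander-Buchsbaum and holds whenever the injective dimension is finite. The key leverage is that $A$ has finite length, so $\Supp(A) = \{\fm\}$ and $\dim(A) = 0$. The standard dimension-inequality for modules of finite injective dimension (see \cite[Section 9.6]{BH} or the local duality / Bass conjecture circle of ideas) states that for any $R$-module $N$ of finite injective dimension, $\id_R(N) \geq \dim(R)$, with more precise information available when $N$ is the object at hand. Indeed, for a nonzero finite length module one expects the top local cohomology $H^{\id_R(A)}_\fm(\;\cdot\;)$ or the appropriate $\Ext$ groups to detect the dimension of $R$.

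The core of the argument I would carry out: combine $\id_R(A) = \depth(R)$ with the inequality $\id_R(A) \geq \dim(R)$. Since $A$ has finite length, one can show the reverse inequality $\id_R(A) \leq \dim(R)$ as well --- this is because the minimal injective resolution of $A$, starting from a module supported only at $\fm$, has length bounded by $\dim(R)$; concretely, the Bass numbers $\mu^i(\fm, A)$ vanish for $i > \dim(R)$ by the vanishing of the relevant $\Ext^i_R(k, A)$ or by Grothendieck nonvanishing together with the characterization of injective dimension for finite length modules. Putting $\depth(R) = \id_R(A) = \dim(R)$ then yields $\depth(R) = \dim(R)$, which is precisely the Cohen-Macaulay condition. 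Alternatively, and perhaps more cleanly, I would invoke the original Bass conjecture (now Roberts' theorem, cited as \cite{int} in the introduction): the mere existence of \emph{any} nonzero finitely generated module of finite injective dimension already forces $R$ to be Cohen-Macaulay, and a finite length module is certainly such a module. Under that reading the proof is a one-line reduction to the settled Bass conjecture, and the real content is just noting that a finite length module qualifies.

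The main obstacle, and the point deserving care, is pinning down exactly which statement is intended. If the observation is meant merely as a special instance of Bass' conjecture, the proof is immediate and the word ``dual'' refers to the symmetry with the projective case \cite[6.2.4]{int1}; the hard part is then only to state this cleanly. If instead a self-contained argument is wanted (independent of Roberts' full resolution), the obstacle is establishing the dimension inequality $\id_R(A) \geq \dim(R)$ for a finite length module without circularity --- this requires either a local duality argument or the New Intersection Theorem applied to the minimal injective (or, after dualizing, projective) resolution. I would present the short route via Bass' conjecture as the primary proof, remarking that the finite length hypothesis makes the reduction trivial, and flag the duality with the projective analogue as the conceptual justification for the name ``dual.''
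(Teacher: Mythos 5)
Your primary route -- observing that a finite length module is in particular finitely generated, so the settled Bass conjecture (Roberts' theorem, \cite{int}) applies verbatim -- is a correct proof of the statement as written, but it is not the paper's argument. The paper instead gives the explicit reduction that justifies the word ``dual'': pass to the completion, take the Matlis dual $M:=\Hom_R(A,E_R(k))$, which is again of finite length and has finite flat dimension (the dual of finite injective dimension), upgrade this to $\pd_R(M)<\infty$ via Raynaud--Gruson \cite{GR}, and then apply the intersection theorem together with Auslander--Buchsbaum to get $\dim R\le \pd_R(M)=\depth(R)-\depth(M)=\depth(R)$. What the paper's route buys is twofold: it mirrors the projective-dimension statement \cite[6.2.4]{int1} at the level of proof rather than merely of statement, and the same Matlis-duality mechanism carries over to the very next observation about \emph{artinian} modules of finite projective dimension, where Bass' conjecture cannot be invoked directly because such modules need not be finitely generated. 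Your one-line reduction buys brevity at the cost of using the full strength of Roberts' theorem and of not setting up that companion result.

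One caution on your ``self-contained'' variant: the inequality $\id_R(A)\ge\dim(R)$ that you propose to combine with the Bass formula $\id_R(A)=\depth(R)$ is, in view of that very formula, \emph{equivalent} to the Cohen--Macaulay conclusion, i.e.\ to the Bass conjecture itself; so as stated that branch is circular. You do flag this, and the escape you gesture at (dualize and apply the New Intersection Theorem to the resulting projective resolution) is precisely the paper's proof, so nothing is actually wrong -- but if you intend to present the Bass-formula argument as an independent proof you must supply the dimension inequality by exactly that dualization, at which point you have reproduced the paper's argument rather than offered an alternative to it.
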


\begin{proof}
	We may assume $R$ is complete. By Matlis duality, $M := \Hom_R(A, E_R(k))$ has finite length and finite flat dimension. By \cite{GR}, $\pd_R(M) < \infty$. Since $\ell(M \otimes R) < \infty$ and by the intersection theorem, $\dim R \leq \pd_R(M) = \depth(R) - \depth(M) = \depth(R)$, as claimed.
\end{proof}

By the same reasoning, the following slightly extends \cite[6.2.4]{int1}:

\begin{observation}\label{art}
	Suppose there is an artinian module $A$ of finite projective dimension. Then $R$ is Cohen--Macaulay.
\end{observation}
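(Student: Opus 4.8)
The plan is to run the Matlis-dual version of the argument used in the preceding observation: there one passed from a finite length module of finite injective dimension to a finite length module of finite flat (hence projective) dimension and invoked the intersection theorem; here I would dualize in the opposite direction, converting the artinian module $A$ of finite projective dimension into a \emph{finitely generated} module of finite \emph{injective} dimension, and then apply Bass' conjecture directly. Throughout I take $A\neq 0$, as is implicit in the statement.

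First I would reduce to the case where $R$ is complete. Since $A$ is artinian it is $\fm$-power torsion, hence canonically a $\widehat R$-module with $A\cong\widehat R\otimes_R A$ (tensor commutes with the direct limit $A=\bigcup_n(0:_A\fm^n)$ of finite length submodules). Base-changing a finite free resolution of $A$ along the flat map $R\to\widehat R$ keeps exactness and carries finitely generated free modules to free $\widehat R$-modules, so $\pd_{\widehat R}(A)\le\pd_R(A)<\infty$; as $R$ is Cohen-Macaulay if and only if $\widehat R$ is, I may and do assume $R=\widehat R$ is complete.

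Next, write $(-)^\vee:=\Hom_R(-,E_R(k))$ for the Matlis duality functor. Over a complete local ring this is an exact contravariant functor sending artinian modules to finitely generated ones, so $A^\vee$ is finitely generated and nonzero. Now choose a finite free resolution $0\lo P_p\lo\cdots\lo P_0\lo A\lo 0$ with $p=\pd_R(A)$ and apply $(-)^\vee$. Exactness yields an exact complex $0\lo A^\vee\lo P_0^\vee\lo\cdots\lo P_p^\vee\lo 0$, and since $\Hom_R(F,E_R(k))$ is injective whenever $F$ is flat (here each $P_i$ is free), this is an injective resolution of $A^\vee$. Hence $\id_R(A^\vee)\le p<\infty$, so $A^\vee$ is a nonzero finitely generated module of finite injective dimension, and Bass' conjecture (see \cite[Section 9.6]{BH}, settled in full by Roberts \cite{int}) forces $R$ to be Cohen-Macaulay; by the first step this returns the conclusion for the original ring.

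The main obstacle I anticipate is the bookkeeping around objects that are not finitely generated: the free modules $P_i$ may have infinite rank, so I must lean on exactness of Matlis duality and on the flat-to-injective passage rather than on any rank count, and I must ensure that finiteness of projective dimension genuinely descends to the completion \emph{before} dualizing (so that $A^\vee$ is finitely generated over the same ring over which it has finite injective dimension). Once these two points are secured, the remainder is formal and mirrors the previous observation verbatim with the roles of projective/injective and artinian/finitely generated interchanged.
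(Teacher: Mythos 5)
Your argument is correct and is exactly the argument the paper intends: its proof reads ``This is similar to the previous item,'' meaning precisely the Matlis-dual run of that observation --- complete the ring, dualize the artinian module $A$ of finite projective dimension into the finitely generated module $A^{\vee}$ of finite injective dimension, and invoke Bass' conjecture. The points you flag (infinite-rank free modules dualizing to injectives, and descending $\pd<\infty$ to $\widehat R$ before dualizing) are handled correctly, so nothing is missing.
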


By applying more advanced results, we show:

\begin{observation}\label{art2}
	Suppose there is an $\fm$-torsion module $A$ of finite projective dimension. Then $R$ is Cohen--Macaulay.
\end{observation}

\begin{proof}
	Since $A$ is $\fm$-torsion, $A^\vee$ is complete (see \cite[4.2]{sim}). In particular, there exists an acyclic complex of injective modules resolving the complete module $A^\vee$. Recall from the work of André that $R$ is equipped with a big Cohen--Macaulay module. By \cite[7.6]{sim}, $R$ is Cohen--Macaulay.
\end{proof}

The following drops the Cohen--Macaulay assumption from \cite[Proposition 2.40]{v1}:

\begin{corollary}\label{vps}
	Let $(R,\fm)$ be a $2$-dimensional ring, and let $I$ be an ideal of grade two and finite projective dimension. Then $\type(R/I) = \type(R)(\mu(I) - 1)$.
\end{corollary}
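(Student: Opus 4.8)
The plan is to reduce the statement to a Betti-number count combined with a single application of local duality. First I would unwind the hypotheses. Since $\grade(I)=2=\dim R$ and $I\subseteq\fm$, we get $\depth R\geq 2=\dim R$, so $R$ is automatically Cohen-Macaulay — this is precisely the sense in which the Cohen-Macaulay hypothesis of \cite[Proposition 2.40]{v1} can be dropped. Moreover $\Ht(I)=\grade(I)=2$ forces $\dim(R/I)=0$, so $R/I$ has finite length and $\depth(R/I)=0$. As $\pd_R(I)<\infty$, the Auslander-Buchsbaum formula gives $\pd_R(R/I)=\depth R-\depth(R/I)=2$, so the minimal free resolution of $R/I$ has the shape
$$0\lo R^{\beta_2}\lo R^{\mu(I)}\lo R\lo R/I\lo 0,$$
with $\beta_1=\mu(I)$ (the first syzygy of $R/I$ is $I$) and $\beta_0=1$. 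Localizing at any minimal prime $\fp$ of $R$ annihilates $R/I$, since $\Ht(I)=2>0$ forces $I\not\subseteq\fp$; the alternating sum of the free ranks then vanishes, giving $1-\mu(I)+\beta_2=0$, i.e. $\beta_2=\mu(I)-1$.

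For the type computation I would pass to the completion, which leaves $r(-)$, $\mu(-)$, and the Betti numbers unchanged, so that the canonical module $\omega_R$ exists and $\mu(\omega_R)=r(R)$. Write $(-)^\vee=\Hom_R(-,E_R(k))$ for the Matlis dual. Since $R/I$ has finite length and $\depth(R/I)=0$, its type is $r(R/I)=\dim_k\operatorname{Soc}(R/I)=\mu((R/I)^\vee)$, the last equality because Matlis duality interchanges socle and top. Local duality \cite[Section 3.5]{BH} identifies $(R/I)^\vee\cong\Ext^2_R(R/I,\omega_R)$ and shows $\Ext^i_R(R/I,\omega_R)=0$ for $i<2$ (as $H^i_\fm(R/I)=0$ for $i\neq 0$). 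Applying $\Hom_R(-,\omega_R)$ to the truncated minimal resolution therefore yields the exact sequence
$$0\lo\omega_R\lo\omega_R^{\mu(I)}\lo\omega_R^{\beta_2}\lo\Ext^2_R(R/I,\omega_R)\lo 0.$$
Because the resolution is minimal, the transposed differentials have entries in $\fm$, so $\Ext^2_R(R/I,\omega_R)$ and $\omega_R^{\beta_2}$ share the same minimal number of generators. Hence $r(R/I)=\mu(\Ext^2_R(R/I,\omega_R))=\beta_2\,\mu(\omega_R)=\beta_2\,r(R)$, and combining with $\beta_2=\mu(I)-1$ gives $r(R/I)=r(R)(\mu(I)-1)$.

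The routine steps are the reduction to Cohen-Macaulayness and the rank count; the step I expect to require the most care is the duality bookkeeping—verifying that $\Ext^i_R(R/I,\omega_R)$ vanishes below the top degree, that $\Hom_R(-,\omega_R)$ preserves minimality so the top Betti number $\beta_2$ passes unchanged to $\mu(\Ext^2_R(R/I,\omega_R))$, and that $\dim_k\operatorname{Soc}(R/I)$ really equals $\mu((R/I)^\vee)$. Once these identifications are secured the asserted equality falls out; I would also double-check that the passage to the completion disturbs none of $r(R)$, $r(R/I)$, or $\mu(I)$.
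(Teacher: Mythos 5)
Your proof is correct, but it takes a genuinely different route from the paper's. The paper gets Cohen--Macaulayness by observing that $R/I$ is artinian of finite projective dimension and invoking its Observation on artinian modules (which ultimately rests on the intersection theorem), then computes $\depth(I)=1$ and $\pd_R(I)=1$ and cites Vasconcelos's Proposition 2.40 as a black box for the formula $r(R/I)=r(R)(\mu(I)-1)$. You instead note that $\grade(I,R)=2$ already forces $\depth R\geq 2=\dim R$, so the Cohen--Macaulay property is immediate and no intersection-theorem input is needed at all --- a genuine simplification of that step, and it shows the ``dropped'' hypothesis was never really needed in dimension two. You then reprove the formula itself from scratch: the rank count $\beta_2=\mu(I)-1$ via localization at a minimal prime, and the identification $r(R/I)=\mu(\Ext^2_R(R/I,\omega_R))=\beta_2\,r(R)$ via local duality and minimality of the dualized resolution; all of these steps check out (the vanishing of $\Ext^i_R(R/I,\omega_R)$ for $i<2$, the socle/top exchange under Matlis duality, and the invariance of $r$, $\mu$, and the Betti numbers under completion are all standard and correctly handled). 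What your approach buys is a self-contained argument that replaces both external ingredients (Observation \ref{art} and \cite[Proposition 2.40]{v1}); what the paper's approach buys is brevity, at the cost of leaning on those references.
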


\begin{proof}
	Recall from $2 = \dim R \geq \operatorname{ht}(I) \geq \operatorname{grade}(I,R) = 2$ that $I$ is $\fm$-primary. From $0 \to I \to R \to R/I \to 0$ we deduce that the artinian module $R/I$ has finite projective dimension. By Observation \ref{art}, $R$ is Cohen--Macaulay. The exact sequence
	\[
	\cdots \to H^i_{\fm}(I) \to H^i_{\fm}(R) \to H^i_{\fm}(R/I) \to \cdots
	\]
	gives $\depth(I) = 1$. By the Cohen--Macaulay property and the Auslander--Buchsbaum formula, $\pd_R(I) = \depth(R) - \depth(I) = 1$. These facts allow us to apply \cite[Proposition 2.40]{v1} and conclude that $\type(R/I) = \type(R)(\mu(I) - 1)$.
\end{proof}

Peskine--Szpiro \cite[Corollary II.5.7]{PS} study the case $\mu(I) = 2$. Here is the case $\mu(I) = 3$:

\begin{corollary}
	Let $(R,\fm)$ be a $2$-dimensional ring, and let $I$ be an ideal of grade two and finite injective dimension. If $\mu(I) = 3$, then:
	\begin{enumerate}
		\item[(i)] $R$ is Gorenstein,
		\item[(ii)] $R/I$ is Cohen--Macaulay and of type two.
	\end{enumerate}
\end{corollary}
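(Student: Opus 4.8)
The plan is to reduce everything to statement (i): once $R$ is known to be Gorenstein, finite injective dimension and finite projective dimension coincide, and (ii) drops out of Corollary \ref{vps}.

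First I record the standing reductions. Since $\id_R(I)<\infty$, Bass' conjecture gives that $R$ is Cohen-Macaulay, so $\depth(R)=\dim(R)=2$; moreover $\id_R(I)=\depth(R)=2$ by Bass' theorem. As $\grade(I)=2=\dim(R)$ we have $\Ht(I)=2$, hence $I$ is $\fm$-primary and contains an $R$-regular element, so $\Supp(I)=\Spec(R)$. The engine is the isomorphism \cite[II.5.4]{PS} exploited in the proof of Proposition \ref{1.1}: for a module $M$ of finite injective dimension over our Cohen-Macaulay ring one has $r(R)\,\mu_2(M)=\mu(M)$, where $\mu_2(M)=\dim_k\Ext^2_R(k,M)$. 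Applying this to $M=I$, and noting that $\mu_2(I)\neq0$ because $\id_R(I)=2$, I get $r(R)\,\mu_2(I)=3$. Since $3$ is prime this leaves exactly two possibilities: $\big(r(R),\mu_2(I)\big)=(1,3)$ or $(3,1)$.

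The heart of the argument --- and the step I expect to be the main obstacle --- is to exclude the case $r(R)=3$. I would pass to the completion (which changes none of the relevant invariants) so that a canonical module $\omega_R$ is available, with $\mu(\omega_R)=r(R)=3$. By the Foxby correspondence for modules of finite injective dimension over a Cohen-Macaulay ring (see \cite{fo} and \cite[3.3.28]{BH}), $I\cong\omega_R\otimes_R N$ for some $N$ with $\pd_R(N)<\infty$ and $\Tor^R_{>0}(\omega_R,N)=0$; counting minimal generators gives $\mu(I)=\mu(\omega_R)\mu(N)$, so $\mu(N)=1$ and $N\cong R/J$. Because $\Supp(I)=\Supp(\omega_R\otimes_R N)=\Supp(N)=\Spec(R)$, the ideal $J$ lies in the nilradical; localizing at any minimal prime $\fp$ makes $R_\fp$ artinian, over which the finite projective dimension of $R_\fp/J_\fp$ forces it to be free and hence $J_\fp=0$. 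As $\Ass(J)\subseteq\Ass(R)=\Min(R)$, this yields $J=0$, so $I\cong\omega_R$. But $I$ is a proper ideal isomorphic to the canonical module, so \cite[3.3.18]{BH} forces $\Ht(I)=1$, contradicting $\Ht(I)=2$. Therefore $r(R)=1$ and $R$ is Gorenstein, proving (i).

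For (ii), now that $R$ is Gorenstein the finiteness of $\id_R(I)$ upgrades to $\pd_R(I)<\infty$ by \cite[3.1.25]{BH}. Thus $I$ is an ideal of grade two and finite projective dimension in the $2$-dimensional ring $R$, and Corollary \ref{vps} applies to give $r(R/I)=r(R)\big(\mu(I)-1\big)=1\cdot(3-1)=2$. Finally $R/I$ is annihilated by a power of $\fm$ (as $I$ is $\fm$-primary), so it is artinian and therefore trivially Cohen-Macaulay; combined with the type computation this shows $R/I$ is Cohen-Macaulay of type two.
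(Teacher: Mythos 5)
Your proof is correct, and part (ii) coincides with the paper's (Gorenstein $\Rightarrow$ $\pd_R(I)<\infty$, then Corollary \ref{vps}). The interesting divergence is in how you kill the case $r(R)=3$ after both arguments reach the same identity $r(R)\,\mu_2(I)=\mu(I)=3$ via \cite[II.5.4]{PS}. The paper's route is shorter: from the local cohomology sequence of $0\to I\to R\to R/I\to 0$ and the fact that $R/I$ is artinian one gets $\depth(I)=1<2=\dim(I)$, so $I$ is not maximal Cohen--Macaulay, and Foxby's inequality \cite[Proposition 3.8]{fo} then forces $\mu_2(I)\geq 2$, which is incompatible with $r(R)\mu_2(I)=3$ unless $r(R)=1$. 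You instead run the Sharp--Foxby equivalence: writing $I\cong\omega_R\otimes_R N$ with $\pd_R(N)<\infty$, the generator count $\mu(I)=\mu(\omega_R)\mu(N)$ forces $N$ cyclic, the support and finite-projective-dimension argument at the minimal primes forces $N\cong R$, hence $I\cong\omega_R$, and \cite[3.3.18]{BH} then contradicts $\Ht(I)=2$. Both are sound; the paper's argument buys brevity by quoting Foxby's Bass-number estimate as a black box, while yours is more self-contained at the level of Bass numbers (you never need $\mu_2(I)\geq 2$, only $\mu_2(I)\neq 0$) and has the side benefit of showing concretely \emph{what} a module realizing $\mu_2=1$ would have to be, at the cost of invoking the equivalence of categories and a height computation for canonical ideals.
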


\begin{proof}
	(i) Recall that $R$ is Cohen--Macaulay. Let $\underline{x} = x_1, x_2$ be a parameter sequence. Then
	\[
	\Ext^2_R\bigl(\Hom_R(k, R/\underline{x}R), I\bigr) \cong \frac{I}{\fm I}.
	\]
	This gives $\type(R) \cdot \mu_2(I) = 3$. Recall that $\depth(I) = 1 < 2 = \dim(I)$. Thanks to \cite[Proposition 3.8]{fo}, we know that $\mu_2(I) \geq 2$. Substituting this into the previous equality yields $\type(R) = 1$; i.e., $R$ is Gorenstein.
	
	(ii) Since $R/I$ is zero-dimensional, it is Cohen--Macaulay. From part (i) we know that $R$ is Gorenstein. This implies that $\pd_R(I) < \infty$. By Corollary \ref{vps}, $R/I$ is of type two.
\end{proof}

\section{Auslander's zero-divisor conjecture}

As a basic tool, we prepare a version of Auslander's zero-divisor conjecture for modules of finite injective dimension. This has its own importance.

\begin{proposition}\label{AUS}
	Let $M$ be finitely generated and of finite injective dimension. Then any $M$-regular element is $R$-regular.
\end{proposition}

\begin{proof}
	By \cite[Proposition 3.6]{cel}, it suffices to show that $\operatorname{grade}(\fp, M) \leq \operatorname{grade}(\fp, R)$ for all $\fp \in \Spec(R)$. 
	Following Bass'  conjecture, $R$ is Cohen--Macaulay. Then
	\[
	\begin{array}{rcl}
	\operatorname{grade}(\fp, M) & \stackrel{(1)}{\leq} & \depth(M_{\fp}) \\
	& \stackrel{(2)}{\leq} & \dim(M_{\fp}) \\
	& \stackrel{(3)}{\leq} & \id_{R_{\fp}}(M_{\fp}) \\
	& \stackrel{(4)}{=} & \depth(R_{\fp}) \\
	& \stackrel{(5)}{=} & \operatorname{grade}(\fp, R),
	\end{array}
	\]
	where (1) is from \cite[Exercise 16.5]{mat}; (2) is from \cite[Proposition 1.2.12]{BH}; (3) and (4) are from \cite[Theorem 3.1.17]{BH}; and (5) follows from \cite[Theorem 2.1.3(b)]{BH} together with the fact that $R$ is Cohen--Macaulay.
\end{proof}

Let us reconstruct Proposition \ref{AUS} by a new argument:

\begin{proposition}\label{sh}
	Let $M$ be finitely generated and of finite injective dimension. Then any $M$-regular element is $R$-regular.
\end{proposition}

\begin{proof}
	Let $x \in \fm$ be $M$-regular. Then $x$ is regular over $M \otimes_R \widehat{R}$ because completion is flat. It is easy to see that $\id_{\widehat{R}}(M \otimes_R \widehat{R}) < \infty$. In summary, $\widehat{R}$ is Cohen--Macaulay and admits a canonical module.
	Recall that
	\[
	\begin{array}{rcl}
	\Ass_{\widehat{R}}\bigl(\Hom_{\widehat{R}}(\omega_{\widehat{R}}, M \otimes_R \widehat{R})\bigr)
	& \stackrel{(1)}{=} & \Supp(\omega_{\widehat{R}}) \cap \Ass_{\widehat{R}}(M \otimes_R \widehat{R}) \\
	& \stackrel{(2)}{=} & \Spec(\widehat{R}) \cap \Ass_{\widehat{R}}(M \otimes_R \widehat{R}) \\
	& \stackrel{(3)}{=} & \Ass_{\widehat{R}}(M \otimes_R \widehat{R}),
	\end{array}
	\]
	where
	\begin{enumerate}
		\item[(1)] see \cite[1.2.27]{BH};
		\item[(2)] see \cite[Theorem 3.3.5(b)]{BH};
		\item[(3)] is trivial.
	\end{enumerate}
	Also,
	\[
	\begin{array}{rcl}
	\Zd\bigl(\Hom_{\widehat{R}}(\omega_{\widehat{R}}, M \otimes_R \widehat{R})\bigr)
	& = & \bigcup_{\fq \in \Ass(\Hom_{\widehat{R}}(\omega_{\widehat{R}}, M \otimes_R \widehat{R}))} \fq \\
	& = & \bigcup_{\fq \in \Ass(M \otimes_R \widehat{R})} \fq \\
	& = & \Zd(M \otimes_R \widehat{R}),
	\end{array}
	\]
	and since $x \notin \Zd(M \otimes_R \widehat{R})$, we deduce that $x$ is regular over $\Hom_{\widehat{R}}(\omega_{\widehat{R}}, M \otimes_R \widehat{R})$.
	It is easy to see from \cite{sha} that $\pd_{\widehat{R}}\bigl(\Hom_{\widehat{R}}(\omega_{\widehat{R}}, M \otimes_R \widehat{R})\bigr)$ is finite; see also \cite[9.6.5(b)]{BH}. By Auslander's zero-divisor theorem, $x$ is $\widehat{R}$-regular (see \cite[Theorem 9.4.7]{BH}). Since $R \subseteq \widehat{R}$, $x$ is $R$-regular. This is what we wanted to prove.
\end{proof}

Let us present a modern proof of Proposition \ref{one}:

\begin{corollary}\label{csing}
	Suppose $\min(R)$ is a singleton. If for some $\fp \in \Spec(R)$ we have $\id_R(\fp) < \infty$, then $R$ is an integral domain.
\end{corollary}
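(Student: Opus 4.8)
The plan is to reprove the statement through the injective zero-divisor principle of Proposition~\ref{AUS} rather than through the explicit square-zero element exploited in Proposition~\ref{one}. First I would record the standard reductions: by Bass' conjecture $R$ is Cohen-Macaulay, so $\Ass(R)=\min(R)=\{P\}$ for the single minimal prime $P$. If $\fp=(0)$ then $R$ is already a domain and there is nothing to do, so I assume $\fp\neq 0$; regarding $\fp$ as a submodule of $R$ gives $\Ass_R(\fp)\subseteq\Ass_R(R)=\{P\}$, and since $\fp\neq0$ this inclusion is an equality, whence $\Zd(\fp)=P$.

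Next I would localize at $\fp$. Since $\id_R(\fp)<\infty$, \cite[Proposition 3.1.9]{BH} yields $\id_{R_\fp}(\fp R_\fp)<\infty$, so Auslander-Buchsbaum-Serre makes $R_\fp$ regular, and in particular a domain. Because $P$ is the unique minimal prime of $R$ it sits inside $\fp$, so $P R_\fp$ is the unique minimal prime of the domain $R_\fp$; this forces $P R_\fp=0$, and by exactness of localization $P_\fp=0$.

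The heart of the argument is upgrading $P_\fp=0$ to $P=0$, and here Proposition~\ref{AUS} does the work. Fixing $p\in P$, the vanishing $P_\fp=0$ produces some $s\in R\setminus\fp$ with $sp=0$. Since $P\subseteq\fp$, the element $s$ avoids $\Zd(\fp)=P$ and is therefore $\fp$-regular; because $\id_R(\fp)<\infty$, Proposition~\ref{AUS} promotes $s$ to an $R$-regular element. Cancelling $s$ in $sp=0$ gives $p=0$, and as $p\in P$ was arbitrary we conclude $P=0$, i.e. $R$ is an integral domain.

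The step I expect to be most delicate is the bookkeeping around the associated primes rather than anything deep: one must verify $\fp\neq0$ before writing $\Ass_R(\fp)=\{P\}$, and must correctly identify $\Zd(\fp)$ with $P$ so that the auxiliary element $s$ is genuinely $\fp$-regular and Proposition~\ref{AUS} is applicable. The remaining ingredients---exactness of localization, the correspondence of minimal primes under localization, and Auslander-Buchsbaum-Serre---are routine.
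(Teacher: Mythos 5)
Your argument is correct and is essentially the paper's own proof of Corollary~\ref{csing}: both rest on the observation that an element $s\notin\fp$ avoids $\Zd(\fp)\subseteq P\subseteq\fp$, hence is $\fp$-regular, and is then promoted to an $R$-regular element by the injective zero-divisor principle (Proposition~\ref{AUS}, equivalently Proposition~\ref{sh}), combined with regularity of $R_\fp$. The only difference is packaging --- the paper deduces injectivity of $R\to R_\fp$ and embeds $R$ in the domain $R_\fp$, while you kill the nilradical $P$ directly --- and your explicit treatment of the degenerate case $\fp=0$ is a small improvement in care.
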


\begin{proof}
	We know $R$ is Cohen--Macaulay, and in particular $\min(R) = \Ass(R)$. Let $\min(R) = \{P\}$.
	Let $\pi : R \to R_{\fp}$ be the natural localization map sending $r$ to $r/1$. Let $r \in \ker(\pi)$. By definition, there exists $x \in R \setminus \fp$ such that $rx = 0$. We claim that $x$ is $\fp$-regular. Indeed, if not, then
	\[
	x \in \Zd(\fp) = \bigcup_{\fq \in \Ass(\fp)} \fq \subseteq \bigcup_{\fq \in \Ass(R)} \fq = P.
	\]
	Since $\min(R) = \{P\}$, we have $P \subseteq \fp$. Hence $x \in \fp$, a contradiction. Thus $x$ is an $\fp$-regular element.
	Now we apply the assumption together with Proposition \ref{sh} to see that $x$ is $R$-regular. It follows that $r = 0$, i.e., $\pi$ is injective.
	From $\id_R(\fp) < \infty$ we know that $R_{\fp}$ is regular and local. Regular local rings are domains (see \cite[Proposition 2.2.3]{BH}). Hence $R_{\fp}$ is an integral domain. Since $R \subseteq R_{\fp}$, we obtain the desired conclusion.
\end{proof}

Let us reprove Auslander's zero-divisor conjecture over Cohen--Macaulay rings:

\begin{observation}\label{NEW}
	Let $(R,\fm)$ be Cohen--Macaulay, and let $M$ be finitely generated with $\pd_R(M) < \infty$. If $x$ is $M$-regular, then $x$ is $R$-regular.
\end{observation}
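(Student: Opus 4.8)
The plan is to deduce the projective statement over a Cohen--Macaulay ring from its injective counterpart, Proposition \ref{sh}, by routing the argument through the canonical module. First I would reduce to the complete case: replacing $R$ by $\widehat R$ and $M$ by $M\otimes_R\widehat R$ is harmless, because $\widehat R$ is faithfully flat over $R$, so $x$ stays regular on $M\otimes_R\widehat R$, the finite projective dimension and the Cohen--Macaulay property pass to $\widehat R$, and $R\subseteq\widehat R$ lets one descend ``$x$ is $\widehat R$-regular'' to ``$x$ is $R$-regular''. Thus I may assume $(R,\fm)$ is complete Cohen--Macaulay, so that the canonical module $\omega:=\omega_R$ exists with $\id_R(\omega)=\dim R<\infty$; I also note $M\neq0$.

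The key device is the module $N:=M\otimes_R\omega$, and the second step is to show $\id_R(N)<\infty$. Since $\pd_R(M)<\infty$, the module $M$ lies in the Auslander class attached to the dualizing module $\omega$ (Foxby, \cite{fo}); in particular $\Tor^R_i(M,\omega)=0$ for all $i>0$. Taking a finite free resolution $F_\bullet\to M$ and using this vanishing, $F_\bullet\otimes_R\omega$ becomes a finite exact resolution $0\to\omega^{b_p}\to\cdots\to\omega^{b_0}\to N\to0$ whose terms are finite direct sums of copies of $\omega$, hence of finite injective dimension. A standard dimension shift along the induced short exact sequences then forces $\id_R(N)<\infty$.

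Next I would check that $x$ is $N$-regular. As $x$ is $M$-regular there is a short exact sequence $0\to M\stackrel{x}{\lo} M\to M/xM\to0$, and $\pd_R(M/xM)\le\pd_R(M)+1<\infty$, so $M/xM$ is again in the Auslander class and $\Tor^R_1(M/xM,\omega)=0$. Tensoring the sequence with $\omega$ and invoking this vanishing yields the exact sequence $0\to N\stackrel{x}{\lo} N\to(M/xM)\otimes_R\omega\to0$, which exhibits $x$ as a nonzerodivisor on the nonzero module $N$ (nonzero because $M$ and $\omega$ are nonzero over the local ring $R$).

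With $\id_R(N)<\infty$ and $x$ an $N$-regular element in hand, Proposition \ref{sh} applies directly and gives that $x$ is $R$-regular, which is the assertion. The main obstacle is the second step, namely establishing simultaneously that $M\otimes_R\omega$ has finite injective dimension and that $\Tor^R_{>0}(M,\omega)=0$; this is precisely the Foxby (Auslander class) equivalence interchanging modules of finite projective dimension with modules of finite injective dimension, and once it is granted everything else in the argument is a formal consequence of it together with Proposition \ref{sh}.
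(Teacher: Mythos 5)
Your proof is correct and follows the same overall strategy as the paper's: reduce to the complete case, pass to $N:=M\otimes_R\omega_R$, record that $\id_R(N)<\infty$ (the Foxby--Sharp correspondence between finite projective and finite injective dimension), and then feed $N$ into the injective zero-divisor statement (Proposition \ref{AUS}, equivalently Proposition \ref{sh}). The one step you handle differently is the transfer of $x$-regularity from $M$ to $N$: the paper computes $\Ass(N)=\Ass(\Hom_R(\omega_R,N))=\Ass(M)$, combining the $\Ass$-identity established inside the proof of Proposition \ref{sh} with Sharp's isomorphism $\Hom_R(\omega_R,M\otimes_R\omega_R)\cong M$ from \cite{sha}, whereas you tensor $0\to M\stackrel{x}\lo M\to M/xM\to 0$ with $\omega_R$ and use $\Tor_1^R(M/xM,\omega_R)=0$, which holds since $M/xM$ again has finite projective dimension. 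Both variants rest on the same Auslander-class facts ($\Tor_{>0}^R(-,\omega_R)=0$ for modules of finite projective dimension, plus finiteness of $\id_R(M\otimes_R\omega_R)$); yours is marginally more self-contained because it avoids the evaluation isomorphism $\Hom_R(\omega_R,M\otimes_R\omega_R)\cong M$, while the paper's route produces the extra byproduct $\Ass(M\otimes_R\omega_R)=\Ass(M)$ that it records in the remark following Observation \ref{NEW}.
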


\begin{proof}
	We may assume that $R$ is complete. In particular, $\omega_R$ exists.
	The assumption $\pd_R(M) < \infty$ implies that $\id_R(M \otimes_R \omega_R) < \infty$.
	We proved in the previous proposition that $\Ass(M \otimes_R \omega_R) = \Ass(\Hom_R(\omega_R, M \otimes_R \omega_R))$.
	We now use \cite[Theorem 2.9]{sha} to deduce that $\Hom_R(\omega_R, M \otimes_R \omega_R) \cong M$. Combining this with the previous observation, we obtain $\Ass(M \otimes_R \omega_R) = \Ass(M)$. Hence $x$ is $(M \otimes_R \omega_R)$-regular.
	Thanks to Proposition \ref{AUS}, $x$ is $R$-regular, as claimed.
\end{proof}

\begin{remark}
	(i) Recall that \cite{moh3} asks for the computation of associated prime ideals of tensor products. As an example, we proved in the setting of Observation \ref{NEW} that $\Ass(M \otimes_R \omega_R) = \Ass(M)$. In particular, we reproved \cite[Corollary 4.6]{moh3} by a different argument.
	
	(ii) By symmetry, in the proof of Observation \ref{NEW}, one may use \cite[Corollary 4.6]{moh3} instead of \cite[Theorem 2.9]{sha}.
\end{remark}

Recall from \cite{au} that a module $T$ is said to be tor-rigid if there exists a non-negative integer $n$ such that for every finitely generated $R$-module $M$, the vanishing of $\Tor_n^R(T,M)$ implies that $\Tor_{n+i}^R(T,M)$ vanishes for all $i \geq 0$. By Auslander \cite[4.3]{au}, this implies the zero-divisor property.

\begin{remark}\label{NOT}
	Despite the useful duality between modules of finite projective dimension and modules of finite injective dimension, let us present a different situation. It is easy to see that modules of projective dimension one are tor-rigid. However, modules of injective dimension one are not necessarily tor-rigid. This may occur even over $1$-dimensional integral domains of type two. For instance, let $R := k[[x^3, x^4, x^5]]$. Then $(x^3, x^4)$ has injective dimension one but is not tor-rigid.
\end{remark}

\medskip
\section{Conormal modules of finite injective dimension  and related variants}

In this section, $(S,\fn)$ is regular, $I \lhd S$ is an ideal, and $R := S/I$. We study the following:

\begin{question}\label{q51}
	\begin{enumerate}
	\item[(i)] (See \cite{nano}). Suppose $\id_R(I/I^2)$ is finite. Is $I$ generated by a regular sequence?
	\item[(ii)] When is $\id_R(I/I^{(2)}) < \infty$? 
		\item[(iii)] When is $\id_R(N_I) < \infty$? 
\end{enumerate}	
\end{question}

Here, $I^{(n)}$ is the $n$-th symbolic power of $I$. Also, $N_I:=(I/I^2)^\ast$ is called the normal bundle, and it plays a role in algebra and geometry. We need the following easy fact:

\begin{fact}\label{tr}
	Let $I \lhd R$ be a radical ideal of dimension one. Then $I^{(n)} / I^n = H^0_{\fm}(R/I^n)$.
\end{fact}
Also, we need the following non-trivial result, as it uses some computations from Serre's intersection multiplicity.  
\begin{fact}\label{nonf}
 (See \cite[Corollary 2.5]{h}). Let $S$ be a regular local ring of dimension $3$ and $\fp$ a prime ideal of dimension one which is not a complete intersection. Then $\fp^i \neq \fp^{(i)}$ for all $i > 1$.
\end{fact}
\begin{proposition}\label{6.1d}
	Suppose $S$ is a $3$-dimensional regular ring, $I := P^2$ for some prime ideal $P$, and $R := S/I$. If $\id_R(I/I^2) < \infty$, then $I$ is generated by a regular sequence.
\end{proposition}

\begin{proof}
	First, we show that $P = \fn := (x,y,z)$ is excluded. Suppose, for contradiction, that this occurs. Then $R$ is artinian, and if $\id_R(I/I^2)$ is finite, we must have $\id_R(I/I^2) = 0$. Thus $I/I^2 \cong \bigoplus_r \omega_R$ for some $r \in \mathbb{N}$. Recall that $\operatorname{Soc}(\omega_R)$ is one-dimensional. Since $\operatorname{Soc}(I/I^2) = \fn^3 / \fn^4$ and $\fn^3$ is generated by the $10$ elements $\{x^3, y^3, z^3, x^2y, xy^2, x^2z, xz^2, y^2z, yz^2, xyz\}$, we deduce that $r = 10$. Similarly, $\fn^2$ is generated by the $6$ elements $\{x^2, y^2, z^2, xy, xz, yz\}$. Then we have
	\[
	10 \cdot \ell(\omega_R) = \ell(\fn^2 / \fn^4) = \ell(\fn^2 / \fn^3) + \ell(\fn^3 / \fn^4) = 6 + 10 = 16,
	\]
	a contradiction because $\ell(\omega_R) = 16/10 = 8/5$ is not an integer. Hence $P = \fn$ is excluded.
	
	Next, we assume that $P$ is a height two ideal. By Bass' conjecture, $R$ is Cohen--Macaulay. Thus, by independence theorem for local cohomology modules we have 
	\[
	0 = H^0_{\fm}(R) = H^0_{\fn}(S/P^2) \stackrel{\ref{tr}}{=} P^{(2)} / P^2.
	\]
	
In view of {Fact} \ref{nonf} we see $P = \langle a, b \rangle$ is generated by a regular sequence. Consequently, $P^2 = \langle a^2, b^2, ab \rangle$, and recall that $P^4 \subseteq \fn P^2$. Therefore
	\[
	\mu(P^2 / P^4) = \dim_k\left(\frac{P^2 / P^4}{\fm (P^2 / P^4)}\right) = \dim_k\left(\frac{P^2 / P^4}{(\fn P^2)/ P^4}\right) = \dim_k\left(\frac{P^2}{\fn P^2}\right) = \mu(P^2) = 3,
	\]
	i.e., $P^2 / P^4$ is minimally $3$-generated. Recall that the type of $R$ is two (see \cite[6.6(ii)]{ini}). By Corollary \ref{cor7.7}, $P^2 / P^4$ is minimally generated by an even number of elements. Hence this case is also excluded.
	
	Finally, without loss of generality, we assume that $\operatorname{ht}(P) = 1$. Since $S$ is a UFD, its height-one unmixed ideals are principal. Hence $I = P^2$ is also principal and generated by a regular element.
\end{proof}

Here is the corresponding result for embedding dimension four.

\begin{proposition}\label{6.2d}
	Suppose $S$ is a $4$-dimensional regular ring, $I := P^h$ for some Gorenstein prime ideal $P$ of height $h$, and $R := S/I$. If $\id_R(I/I^2) < \infty$, then $I$ is   generated by a regular sequence.
\end{proposition}

\begin{proof}
	Suppose first that $P = \fn$. So $I := \fn^4$. The minimal number of generators of $\fn^n$ in $d = 4$ variables is $\binom{n+d-1}{d-1} = \binom{n+3}{3}$. Thus:
	\[
	\begin{aligned}
	\mu(\fn^4) &= \binom{7}{3} = 35, \quad \mu(\fn^5) = \binom{8}{3} = 56,\\
	\mu(\fn^6) &= \binom{9}{3} = 84, \quad \mu(\fn^7) = \binom{10}{3} = 120.
	\end{aligned}
	\]
	Since $\operatorname{Soc}(I/I^2) = \fn^7 / \fn^8$ and $\fn^7$ is generated minimally by $120$ elements, we deduce that $I/I^2 \cong \bigoplus_{35} \omega_R$. Then we have
	\[
	120 \cdot \ell(\omega_R) = \ell(\fn^4 / \fn^8) = \ell(\fn^4 / \fn^5) + \ell(\fn^5 / \fn^6) + \ell(\fn^6 / \fn^7) + \ell(\fn^7 / \fn^8) = 35 + 56 + 84 + 120 = 295,
	\]
	a contradiction because $\ell(\omega_R) = 295/120 = 59/24$ is not an integer. Hence $P = \fn$ is excluded.
	
	Now, suppose $P$ is of height three. So $I := P^3$.   Recall that $\type( R)=3$ (see \cite[6.6(ii)]{ini}). In particular, $R$ is not Gorenstein. But by Bass' conjecture, $R$ is Cohen--Macaulay. Thus,
by independence theorem for local cohomology, we have	\[
	0 = H^0_{\fm}(R) = H^0_{\fn}(S/P^3) \stackrel{\ref{tr}}{=} P^{(3)} / P^3.
	\]
	
	In view of \cite[Corollary 2.6]{h}, $P = \langle a, b, c \rangle$ is generated by a regular sequence. Consequently,
	\[
	P^3 = \langle a^3, a^2b, ab^2, a^2c, ac^2, b^3, b^2c, bc^2, abc \rangle,
	\]
	and recall that $P^6 \subseteq \fn P^4$. Therefore
	\[
	\mu(P^3 / P^6) = \dim_k\left(\frac{P^3 / P^6}{\fm (P^3 / P^6)}\right) = \dim_k\left(\frac{P^3 / P^6}{(\fn P^3)/ P^6}\right) = \dim_k\left(\frac{P^3}{\fn P^3}\right) = \mu(P^3) = 10,
	\]
	i.e., $P^3 / P^6$ is minimally $10$-generated. Let $x$ be a system of parameters for $R$. In light of \cite[II.5.4]{PS}, we have
	\[
	\Ext_R^1\bigl(\Hom_R(k, R/xR), P^3 / P^6\bigr) \cong k \otimes_R \frac{P^3 / P^6}{x(P^3 / P^6)} \cong \frac{P^3 / P^6}{\fm (P^3 / P^6)}.
	\]
	Thus $3 \cdot \mu_1(P^3 / P^6) = \mu_1(P^3 / P^6) \cdot \type(R) = 10$, a contradiction. Hence this case is also excluded.
	
	Now, suppose $P$ is of height two. So $I := P^3$. By a result of Serre, $P$ is a complete intersection, as we assumed that $R/P$ is Gorenstein. Suppose for contradiction that $\id_R(I/I^2) < \infty$. Then by a computation similar to that in the previous result, $P^2 / P^4$ is minimally $3$-generated, the type of $R$ is two, and $\mu(P^2 / P^4) \in 2\mathbb{N}$. Hence this case is also excluded.
Finally, without loss of generality, we assume that $\operatorname{ht}(P) = 1$. Since $S$ is a UFD, its height-one unmixed ideals are principal. 
	
	In summary, we proved that \[
	\id_R(I/I^2) < \infty\Longrightarrow   I \emph{ is complete intersection},
	\]as desired.
\end{proof}

\begin{lemma}
	Let $S = k[x_1,\dots,x_d]$ be a polynomial ring over a field $k$, and let $I$ be a monomial ideal minimally generated by $G(I):=\{a_1, \dots, a_n\}$. Assume $
	(I^2:_S a_i)=I$ for all $a_i\in G(I)$.
	Then $I/I^2$ is a submodule of a free $S/I$-module. 
\end{lemma}

\begin{proof}
\noindent{Step 1 – Minimal generating set and canonical decomposition.}
Order \(G(I)\) so that
\(
a_1<a_2<\cdots<a_n
\)
with respect to a fixed monomial order. Since \(G(I)\) is minimal, no \(a_i\) divides \(a_j\) for \(i\neq j\).
For a monomial \(m\in I\), define its \emph{first generator} to be the generator \(a_i\) with smallest index such that \(a_i\mid m\). Since \(G(I)\) generates \(I\), every monomial of \(I\) has a first generator.
If \(a_i\) is the first generator of \(m\), write uniquely
\(
m=ra_i,
\)
where \(r\) is a monomial.
Extending this assignment k-linearly, every element \(x\in I\) admits a unique decomposition
\(
x=\sum_{i=1}^{n} r_i a_i,
\)
where each \(r_i\) is a \(k\)-linear combination of monomials and every monomial appearing in \(r_i a_i\) has first generator \(a_i\). Uniqueness follows because the monomials of \(I\) are partitioned according to their first generators.

\noindent\text{Step 2 -- Definition of the map.}
Define
\(\Phi(x)=(\overline{r_1},\dots,\overline{r_n})\),
where
$
x=\sum_{i=1}^{n} r_i a_i
$
is the canonical decomposition from Step 1 and $\overline{r_i}$ denotes the image of $ r_i\in R$.	
	
\noindent\text{Step 3 -- Well-definedness of $\Phi$.}
Suppose \(x-y\in I^2\). Write \(x\) and \(y\) in canonical form:
\(
x=\sum_{i=1}^{n} r_i a_i,\)
\(
y=\sum_{i=1}^{n} r_i' a_i.
\)
Then
$x-y=\sum_{i=1}^{n}(r_i-r_i')a_i\in I^2.
$
We must show that
\(
\overline{r_i}=\overline{r_i'}
\)
in \(R=S/I\) for every \(i\).

\noindent\textit{3.1 -- Disjointness of monomial supports.}
Fix \(i\). Write
$
r_i-r_i'=\sum_t c_{i,t}u_{i,t},
$
where \(u_{i,t}\) are monomials and \(c_{i,t}\in k\). Then
$
(r_i-r_i')a_i=\sum_t c_{i,t}(u_{i,t}a_i).
$ By construction of the canonical decomposition, every monomial
\(u_{i,t}a_i\) has first generator \(a_i\). Therefore, if \(i\neq j\),
the supports of \((r_i-r_i')a_i\) and \((r_j-r_j')a_j\) are disjoint.

\noindent\textit{3.2 -- Monomial ideals.}
Since \(I\) is a monomial ideal, \(I^2\) is also a monomial ideal.
Recall that if a polynomial belonging to a monomial ideal is written as
a \(k\)-linear combination of distinct monomials, then every monomial
appearing with nonzero coefficient belongs to that ideal.
Because the supports of the polynomials
\((r_i-r_i')a_i\) are pairwise disjoint, every monomial appearing in
\((r_i-r_i')a_i\) also appears in the sum
$
\sum_{i=1}^{n}(r_i-r_i')a_i=x-y.
$ Since \(x-y\in I^2\), every such monomial belongs to \(I^2\).
Consequently every monomial appearing in \((r_i-r_i')a_i\) belongs to
\(I^2\), and hence
$
(r_i-r_i')a_i\in I^2
$ for every \(i=1,\dots,n\).

\noindent\textit{3.3 -- Conclusion.}
From
$
(r_i-r_i')a_i\in I^2
$ we obtain
$r_i-r_i'
\in
(I^2:_S a_i).
$ By hypothesis,
$
(I^2:_S a_i)=I,
$
and therefore
$
r_i-r_i'\in I.
$ Hence
$
\overline{r_i}
=
\overline{r_i'}.
$  Thus \(\Phi\) is well-defined.
	
	\noindent{Step 4 – Injectivity of $\Phi$.}
Suppose $\Phi(x + I^2) = 0$ in $R^n$. Then $\overline{r_i} = 0$ in $R$ for all $i$, so $r_i \in I$ for all $i$.
	Since $r_i \in I$ and $a_i \in I$, their product $r_i a_i$ lies in $I^2$. Hence
	\(
	x = \sum_{i=1}^n r_i a_i \in I^2.
	\)
	Thus $x + I^2 = 0$ in $I/I^2$. Therefore $\Phi$ is injective.
This completes the proof.
\end{proof}

\begin{corollary}\label{zer}
	Assume, in addition to the previous lemma, that $R$ is zero-dimensional. If $\id_R(I/I^2)$ is finite, then $I$ is generated by a regular sequence.
\end{corollary}

\begin{proof}
	First, recall that the $R$-module $I/I^2$ is injective, because its injective dimension is bounded by the $\depth(R)=\dim(R)=0$. By Matlis' decomposition theorem, $I/I^2 \cong \bigoplus \omega_R$.  Hence, for some $n$, we have
	\[
	\omega_R \stackrel{\Delta}{\hookrightarrow} \bigoplus \omega_R \cong I/I^2 \hookrightarrow R^n,
	\]
	where $\Delta$ is the diagonal map. This shows that the canonical module $\omega_R$ is torsionless. By  \cite[3.7]{moh2}, $R$ is Gorenstein. Over Gorenstein rings, finite injective dimension implies finite projective dimension. Therefore $\pd_R(I/I^2)$ is finite. It follows from \cite{v} that $I$ is generated by a regular sequence, as claimed.
\end{proof}

\begin{remark}\label{e}
1)	(Jacobian criterion). Let $R$ be an equidimensional affine ring (not necessarily reduced) over a perfect field $k$. The following assertions are true:
	\begin{enumerate}
		\item[(i)] Suppose that $\Omega_{R/k}$ is locally free over $R$ and $R$ is reduced or $\operatorname{char} k = 0$. Then $\Omega_{R/k}$ has rank $d := \dim R$.
		\item[(ii)] The module $\Omega_{R/k}$ is locally free over $R$ of rank $d$ if and only if $R_{\fp}$ is a regular local ring for each prime ideal $\fp$ of $R$.
	\end{enumerate}

2) Now, suppose $S$ is essentially of finite type over $k$.
	\begin{enumerate}
		\item[(i)] We recall the conormal sequence
		\[
		I/I^2 \stackrel{\delta}{\longrightarrow} \Omega_{S/k} \otimes_S R \longrightarrow \Omega_{R/k} \longrightarrow 0.
		\]
		It was claimed somewhere that $\delta$ is injective provided that $I/I^2$ is torsion-free. However, this requires the additional assumption that $I/I^2$ has a rank. If $\delta$ were   injective, then we could remove the monomial assumption from Corollary \ref{zer}. Indeed, by Fact \ref{e}, $\Omega_{S/k} \otimes_S R = S^n \otimes_S R = R^n$. Thus
		\(
		\omega_R \stackrel{\Delta}{\hookrightarrow} \bigoplus \omega_R \cong I/I^2 \hookrightarrow R^n,
		\)
		which implies that $I$ is generated by a regular sequence.
		
		\item[(ii)] Recall that over rings of depth zero, every module is torsion-free. Let $S = k[x,y]$ and $I := (x,y)^2$. In the ring $R := k[x,y]/(x,y)^2$, the element $x^2y + I^2$ is nonzero and lies in the kernel of $\delta : I/I^2 \to \Omega_{S/k} \otimes_S R$, since this map is determined by the assignment $f + I^2 \mapsto (\partial f/\partial x + I,\; \partial f/\partial y + I)$.
		
		\item[(iii)] Adopt the previous notation. Then $I^2 = (x,y)^4$ and
		\[
		(I^2 :_S I) = ((x,y)^4 : (x,y)^2) = (x,y)^2 = I.
		\]
		Hence $\operatorname{Ann}_{S/I}(I/I^2) = \frac{(I^2 :_S I)}{I} = 0$. Therefore $I/I^2$ is a faithful $S/I$-module.
		
		\item[(iv)] Recall that $I$ is not a complete intersection. From this we deduce that $\id(I/I^2) = \infty$, as one may check more directly (see Proposition \ref{6.1d}).
	\end{enumerate}

3) The condition \( (I^2 : a_i)=I \) for  \( a_i\in G(I) \) is rather strong; nevertheless, it is satisfied by several natural classes of monomial ideals. 	Let
	\(
	I=(x,y)\subseteq k[x,y].
	\)
	Then
	\(
	I^2=(x^2,xy,y^2).
	\)
	We compute
	\(
	(I^2:x)
.
	\)
	A monomial $u$ satisfies $ux\in I^2$ if and only if $u$ has positive degree, equivalently $u\in (x,y)$. Hence
	\(
	(I^2:x)=(x,y)=I.
	\)
	Similarly,
	\(
	(I^2:y)=I.
	\)
	Therefore
	\(
	(I^2:a)=I
	\)
	for every minimal generator $a\in G(I)$.  More generally,
let
	\(
	I=(x^a,y^b)\subseteq k[x,y]\),
	\(a,b\ge 1.
	\)
	Then
the hypothesis
	\(
	(I^2:a_i)=I
	\)
	holds for every minimal generator of $I$.
\end{remark}

\begin{proposition}\label{511}		Let $R$ be of dimension at most $1$, and complete. Assume the injective dimension of the normal bundle $N$ is finite. Then $I$ is generated by a regular sequence.  
\end{proposition}

\begin{proof}
First, assume $R$ is zero-dimensional.	Let $(-)^\ast$ denote $\Hom_R(-,R)$. Recall that $N := (I/I^2)^\ast$, where $R = S/I$. Also recall that any module of the form $(-)^\ast$ is torsionless.
Then
there is a free module $F$ so that the following is exact:
$$0\lo (I/I^{2})^\ast\lo F\lo  \frac{F}{ (I/I^{2})^\ast}  \lo 0$$
It follows that $(I/I^{2})^\ast$ is injective. Thus, the sequence
splits.  By the Krull--Remak--Schmidt theorem,
$(I/I^{2})^\ast$ is free. Since $\depth(R)=0$,
$I/I^{2}$ is free (see \cite{ram}). By
Vasconcelos and Ferrand, $I$ is generated by a regular sequence (see \cite{v}).

Now, assume $R$ is 1-dimensional. Recall that $N := (I/I^2)^\ast$  is torsionless. In Lemma \ref{in} (see below) we will show that $R$ is generically Gorenstein, since there exists a nonzero torsionless module of finite injective dimension. Over such a ring, any dual module is reflexive (see e.g. \cite[Fact 5.14]{moh2}). In particular, $N$ is reflexive. In {Proposition} \ref{74} (see below) we will show that $R$ is Gorenstein. Thus, $\pd(N) < \infty$. The ring $R$ is Cohen–Macaulay, i.e., $\depth(R) = 1$. But $\depth(\Hom_R(-,R)) \geq \min\{2,\depth(R)\} = 1 > 0$ (see \cite[1.4.19]{BH}). Combining these with the Auslander–Buchsbaum formula, it follows that $N$ is free. It turns out that $I/I^2$ is free. In this case, by the work of Vasconcelos and Ferrand, $I$ is generated by a regular sequence (see \cite{v}).
\end{proof}

\begin{corollary}\label{511a}	The following assertions are valid:

	\begin{enumerate}
	\item[(i)]  Suppose $R$ is zero-dimensional, and assume $\id_R((I/I^{(2)})^\ast) < \infty$. Then $I$ is generated by a regular sequence.  
	\item[(ii)] Suppose $R$ is a $1$-dimensional complete reduced ring with perfect residue field, and assume $\id_R((I/I^{(2)})^\ast) < \infty$. Then $I$ is generated by a regular sequence.  
\end{enumerate}	
\end{corollary}

\begin{proof}
(i) Since $R$ is zero-dimensional, we deduce that 
$ I^2=I^{(2)}$. So, the desired claim is in {Proposition} \ref{511}.

(ii) Recall that $I/I^2$ and $I/I^{(2)}$ are connected by the short exact sequence:\[
0 \longrightarrow I^{(2)}/I^2 \longrightarrow I/I^2 \longrightarrow I/I^{(2)} \longrightarrow 0 \quad(\ast)
\]	The ideal $I$ is radical since $S/I$ is reduced. Then from the exact sequence
	\[
	0 \longrightarrow I^{(2)}/I^2 \longrightarrow I/I^2 \stackrel{\delta}{\longrightarrow} \Omega_{S/k} \otimes_S R \longrightarrow \Omega_{R/k} \longrightarrow 0,
	\]
	we see that $I/I^2$ and $I/I^{(2)}$ have the same dual because their difference $I^{(2)}/I^2$ is torsion and consequently has trivial dual (see $(\ast)$). Hence the injective dimension of the normal bundle is finite. It remains to apply Proposition \ref{511}.
\end{proof}

\begin{proposition}	The following assertions are valid:
	
	\begin{enumerate}
		\item[(i)]  Over reduced rings, Question~\ref{q51}(i) reduces to locally complete intersection ideals on the punctured spectrum.
		\item[(ii)] Question~\ref{q51}(ii) (resp. (iii)) reduces to locally complete intersection ideals on the punctured spectrum.  
	\end{enumerate}	
	
\end{proposition}

The following proof of (i) works for generically Gorenstein instead of reducedness, but we think that this condition can be removed as well.

\begin{proof}
(i)	We proceed by induction on $d := \dim R$. The case $d = 0$ is clear because the ring is reduced, and so becomes a field by dimension considerations. Thus we may assume $d > 0$ and suppose the desired claim holds for rings of dimension $< d$. Let $\mathfrak{q} \in \operatorname{Var}(I) \setminus \{\mathfrak{m}\}$. Since $\operatorname{id}_R(I/I^2) < \infty$ and localization of an injective module over a Noetherian ring is injective, we have $\operatorname{id}_{R_{\mathfrak{q}}}(I R_{\mathfrak{q}} / I_{\mathfrak{q}}^2) \leq \operatorname{id}_R(I/I^2) < \infty$. Recall that $\dim(R_{\mathfrak{q}}) < d$. These facts allow us to apply the inductive step and assume, in addition, that $I$ is locally a complete intersection over the punctured spectrum. This completes the proof.

(ii) The only difference from part (i) is the case $d = 0$, where $d := \dim R$. In this case, it is enough to apply Corollary \ref{511a}.
\end{proof}

In Proposition \ref{6.1d}, we considered ideals of the form $\mu(I) = \operatorname{ht}(I) + 1$. Here is another situation for which $\mu(I) \leq \operatorname{ht}(I) + 1$.

\begin{observation}\label{alm}
	Suppose $I$ is prime, $\mu(I) \leq \operatorname{ht}(I) + 1$, and $\id_R(I/I^2) < \infty$. Then $I$ is generated by a regular sequence.
\end{observation}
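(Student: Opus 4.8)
The plan is to show that the hypotheses force $I$ to be a complete intersection, i.e. $\mu(I)=g$ where $g:=\Ht(I)$; since $S$ is regular, and hence Cohen--Macaulay, an ideal of height $g$ generated by $g$ elements is automatically generated by an $S$-regular sequence, so this suffices. First I would record the standing reductions. Because $I/I^2$ is a nonzero finitely generated $R$-module of finite injective dimension, Bass' conjecture makes $R=S/I$ Cohen--Macaulay, and as $I$ is prime $R$ is a Cohen--Macaulay local domain; in particular $g=\Ht(I)=\grade(I)$ and $\id_R(I/I^2)=\depth(R)=d:=\dim R$. If $\mu(I)=g$ we are done by the remark above, so assume toward a contradiction that $I$ is a \emph{proper} almost complete intersection, $\mu(I)=g+1$. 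Note that $g\geq 2$: a height-one prime of the regular, hence factorial, ring $S$ is principal, so the case $g=1$ cannot be proper.

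Next I would extract a numerical identity from the injective resolution of the conormal module. Localizing at the prime $I$ shows $(I/I^2)_I\cong \fm_{S_I}/\fm_{S_I}^2$ is a $\kappa(I)$-vector space of dimension $\dim S_I=g$, so $I/I^2$ has torsion-free rank $g$, while $\mu(I/I^2)=\mu(I)=g+1$. Following the computation in the proof of Proposition \ref{1.1} (which rests on \cite[II.5.4]{PS}), for a system of parameters $\ux$ of the $d$-dimensional ring $R$ one obtains
$$\bigoplus_{r(R)}\Ext^d_R(k,I/I^2)\;\cong\;\Ext^d_R\big(\Hom_R(k,R/\ux R),\,I/I^2\big)\;\cong\;(I/I^2)\big/\fm (I/I^2),$$
whence $r(R)\,\mu_d(I/I^2)=\mu(I)=g+1$, where $\mu_d(I/I^2)$ is the top Bass number and $r(R)$ is the type of $R$. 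In particular $r(R)$ divides $g+1$.

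The heart of the matter is to pin down $\mu_d(I/I^2)$, and this is the step I expect to be the main obstacle. The clean outcome to aim for is $r(R)=1$, i.e. $R$ Gorenstein, which is then excluded by Kunz's theorem \cite{kunz} that a proper almost complete intersection is never Gorenstein --- giving the desired contradiction, and incidentally recovering Kunz's result. To force $r(R)=1$ I would first invoke the preceding corollary to reduce to the case where $I$ is locally a complete intersection on the punctured spectrum, so that $I/I^2$ is locally free of rank $g$ away from $\fm$; after passing to the completion (harmless for $\mu$, for $r(R)$, and for the Bass numbers) I would use Foxby's equivalence \cite{fo} to replace $I/I^2$ by the perfect complex $G:=\RHom_R(\omega_R,I/I^2)$, which has rank $g$, is free on the punctured spectrum, and satisfies $\mu_d(I/I^2)=\beta_0(G)$. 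The task then becomes to show $\beta_0(G)\geq g+1$, i.e. that the finite-length homology of $G$ concentrated at $\fm$ contributes at least one extra generator in degree zero; this would pin $r(R)=1$ and close the argument.

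I expect the genuine difficulty to be exactly this control of the top Bass number. Unlike the situation of Proposition \ref{1.1}, the conormal module of a \emph{proper} almost complete intersection is typically not maximal Cohen--Macaulay: it carries finite-length torsion coming from the extra generator, so one cannot simply apply Sharp's theorem \cite{sha} to write $I/I^2\cong\oplus\,\omega_R$ and count. Indeed, were $I/I^2$ maximal Cohen--Macaulay, Sharp's theorem would give $I/I^2\cong\omega_R^{\oplus g}$ and hence $g+1=\mu(I)=g\cdot r(R)$, an equation with no integer solution once $g\geq 2$ --- so the whole obstruction is concentrated in the depth defect of the conormal module, and the finite injective dimension hypothesis must be played off against the single extra generator in the almost complete intersection presentation.
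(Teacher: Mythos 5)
Your proposal correctly identifies the decisive intermediate goal --- forcing $R=S/I$ to be Gorenstein in the proper almost complete intersection case --- but it leaves precisely that step unproved: you acknowledge that controlling the top Bass number $\mu_d(I/I^2)$, equivalently showing $\beta_0(G)\geq g+1$ for $G=\RHom_R(\omega_R,I/I^2)$, is ``the main obstacle,'' and you offer no argument for it. As written, the numerical identity $r(R)\,\mu_d(I/I^2)=g+1$ only tells you that $r(R)$ divides $g+1$, which is far from $r(R)=1$. So there is a genuine gap, and it sits exactly where you predicted.

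The missing idea is structural, not numerical. For an almost complete intersection prime $I$ with $R$ Cohen--Macaulay, Matsuoka's result \cite[Proposition 1]{matsuoka} provides an exact sequence
$$0\lo \omega_R\lo R^{n}\lo I/I^2\lo 0.$$
Since $R$ is Cohen--Macaulay (Bass), $\id_R(\omega_R)<\infty$, and $\id_R(I/I^2)<\infty$ by hypothesis, so the two outer terms of this sequence have finite injective dimension and hence so does $R^{n}$; that is, $R$ is Gorenstein --- no Bass-number bookkeeping required. This is the one-line replacement for the entire third and fourth paragraphs of your plan. The endgame also differs: rather than quoting Kunz to derive a contradiction (which would be circular in the context of this paper, since the Observation is used here to \emph{reprove} Kunz's theorem, though citing the 1974 result is of course logically admissible), the paper converts $\id_R(I/I^2)<\infty$ into $\pd_R(I/I^2)<\infty$ over the now-Gorenstein ring $R$ and invokes Vasconcelos' theorem on the conormal module \cite{v} to conclude that $I$ is generated by a regular sequence. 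Your observation that the conormal module of a proper almost complete intersection is not maximal Cohen--Macaulay is accurate and explains why Sharp's decomposition cannot be applied directly, but without Matsuoka's sequence the argument does not close.
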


\begin{proof}
	By Bass' conjecture, $R$ is Cohen--Macaulay. Then $K_R = \omega_R$ has finite injective dimension. Suppose, for contradiction, that $\mu(I) = \operatorname{ht}(I) + 1$. In light of \cite[Proposition 1]{matsuoka}, there exists an exact sequence
	\(
	0 \longrightarrow K_R \longrightarrow R^n \longrightarrow I/I^2 \longrightarrow 0,
	\)
	which shows that $\id_R(R) < \infty$. Over Gorenstein rings, finite injective dimension implies finite projective dimension. Hence $\pd_R(I/I^2)$ is finite. It follows from \cite{v} that $I$ is generated by a regular sequence, as claimed.
\end{proof}

Let us present a new proof of \cite{kunz}:

\begin{theorem}[Kunz]
	Almost complete intersection domains are not Gorenstein.
\end{theorem}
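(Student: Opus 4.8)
The plan is to argue by contradiction and thereby realize the statement as essentially the contrapositive of Observation~\ref{alm}. Write $R=S/I$ with $S$ regular, as in the standing notation of this section. Since $R$ is a domain, the ideal $I$ is prime, and the almost complete-intersection hypothesis means precisely that $\mu(I)=\Ht(I)+1$ (one generator more than a complete intersection). Suppose, toward a contradiction, that $R$ is Gorenstein. Being Gorenstein, $R$ is Cohen-Macaulay; passing to the completion if necessary I may assume $\omega_R$ exists, and then $\omega_R\cong R$.

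The key step is to manufacture finiteness of the homological dimensions of $I/I^2$. Because $\mu(I)=\Ht(I)+1$, I would invoke \cite[Proposition~1]{matsuoka} to obtain the exact sequence
$$0\lo \omega_R\lo R^n\lo I/I^2\lo 0.$$
Substituting $\omega_R\cong R$ exhibits $I/I^2$ as the cokernel of a map of free modules, so $\pd_R(I/I^2)\leq 1<\infty$. Over the Gorenstein ring $R$ finite projective dimension is equivalent to finite injective dimension (see \cite[3.1.25]{BH}), and hence $\id_R(I/I^2)<\infty$.

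At this point the hypotheses of Observation~\ref{alm} are in force: $I$ is prime, $\mu(I)\leq\Ht(I)+1$, and $\id_R(I/I^2)<\infty$. That observation forces $I$ to be generated by a regular sequence. Since $R$ is Cohen-Macaulay, such a regular sequence is a minimal generating set of length $\grade(I)=\Ht(I)$, so $\mu(I)=\Ht(I)$. This contradicts the defining equality $\mu(I)=\Ht(I)+1$ of an almost complete intersection, and therefore $R$ cannot be Gorenstein.

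I expect the only delicate point to be the bookkeeping around the canonical module: confirming that Matsuoka's Bourbaki-type sequence is available in exactly the displayed form for an almost complete intersection, and that the identification $\omega_R\cong R$ is legitimate, which is why I reduce to the local (or complete, local) case before using it. Everything else is a matter of matching the numerical invariants $\mu(I)$ and $\Ht(I)$ against the conclusion of Observation~\ref{alm}, together with the standard Gorenstein interchange of finite projective and finite injective dimension. One could equally bypass the injective-dimension step and apply \cite{v} directly to $I/I^2$ once $\pd_R(I/I^2)<\infty$ is established, but routing through Observation~\ref{alm} makes the logical dependence on the earlier results transparent.
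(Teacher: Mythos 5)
Your proof is correct and follows essentially the same route as the paper: assume $R$ is Gorenstein, use Matsuoka's sequence $0\to \omega_R\to R^n\to I/I^2\to 0$ to conclude $\id_R(I/I^2)<\infty$, and then invoke Observation~\ref{alm} to force $\mu(I)=\Ht(I)$, contradicting the almost complete-intersection hypothesis. The only cosmetic difference is that you pass through $\pd_R(I/I^2)<\infty$ before converting to finite injective dimension, whereas the paper reads $\id_R(I/I^2)<\infty$ directly off the short exact sequence from $\id_R(K_R)<\infty$.
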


\begin{proof}
	We adopt the previous notation. Suppose, for contradiction, that $R$ is Gorenstein. From $0 \to K_R \to R^n \to I/I^2 \to 0$ and the finiteness of $\id(R)$, we conclude that $\id_R(I/I^2) < \infty$. By the previous result, $\operatorname{ht}(I) = \mu(I)$, a contradiction.
\end{proof}

\begin{corollary}\label{513}
	Suppose $I$ is an unmixed ideal and generated by $3$ elements. If $\id_R(I/I^2) < \infty$, then $I$ is generated by a regular sequence.
\end{corollary}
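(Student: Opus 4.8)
The plan is to derive this corollary directly from the two observations already established, namely Observation \ref{alm} and Observation \ref{zer}, together with the standard height inequality. The key observation is that the hypotheses exactly fit the scope of Observation \ref{alm} once we check the numerical condition on the number of generators. Specifically, $I$ is prime and generated by $3$ elements, so $\mu(I)=3$. By Krull's height theorem, $\Ht(I)\leq \mu(I)=3$, and since $I$ is prime with $\id_R(I/I^2)<\infty$, Bass' conjecture tells us $R$ is Cohen-Macaulay; hence $R$ is equidimensional and $I$ is height-unmixed, so $\grade(I)=\Ht(I)$.

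The first step is to rule out the possibility $\Ht(I)=3$, for in that case $\mu(I)=\Ht(I)$, which already says $I$ is a complete intersection (a height-$h$ prime generated by $h$ elements is generated by a regular sequence, by the Cohen-Macaulay property), so there is nothing to prove. Thus we may assume $\Ht(I)\leq 2$, and the inequality $\mu(I)=3\leq \Ht(I)+1$ forces $\Ht(I)\geq 2$, hence $\Ht(I)=2$ and $\mu(I)=\Ht(I)+1$. This is precisely the almost complete intersection situation handled by Observation \ref{alm}.

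The second step is then simply to invoke Observation \ref{alm}: since $I$ is prime, $\mu(I)=\Ht(I)+1\leq \Ht(I)+1$, and $\id_R(I/I^2)<\infty$, that observation yields directly that $I$ is generated by a regular sequence. I would phrase the argument so that the case analysis on $\Ht(I)$ is explicit, because the only subtlety is confirming that $\mu(I)=3$ places us in the regime $\mu(I)\leq \Ht(I)+1$ after the trivial complete-intersection case $\Ht(I)=3$ is disposed of.

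The main (and essentially only) obstacle is a careful bookkeeping of the possible values of $\Ht(I)$ against $\mu(I)=3$: one must make sure each case either lands in the hypotheses of Observation \ref{alm} or is itself a complete intersection for a separate elementary reason. No new homological input is needed beyond what Observation \ref{alm} already supplies; the corollary is genuinely a specialization once the height is pinned down. I would therefore keep the proof short, pointing to Observation \ref{alm} for the decisive case and to the Cohen-Macaulay property for the boundary case $\Ht(I)=\mu(I)$.
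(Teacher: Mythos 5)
Your overall strategy is the same as the paper's: split on $\Ht(I)\in\{1,2,3\}$, observe that $\Ht(I)=3$ gives a complete intersection (the paper cites \cite[1.2.21]{BH}), and feed the case $\Ht(I)=2$, where $\mu(I)=3=\Ht(I)+1$, into Observation \ref{alm}. That part is correct.

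The gap is in how you eliminate the case $\Ht(I)=1$. You write that ``the inequality $\mu(I)=3\leq \Ht(I)+1$ forces $\Ht(I)\geq 2$,'' but $\mu(I)\leq\Ht(I)+1$ is not a general fact about prime ideals --- it is precisely the hypothesis of Observation \ref{alm} that has to be \emph{verified}, not a premise you may quote. (In an arbitrary Noetherian local ring a height-one prime can need many generators, and even in a three-dimensional regular local ring there are height-two primes with arbitrarily large $\mu$; so no such bound exists in general.) As written, your argument simply assumes away the possibility $\Ht(I)=1$, $\mu(I)=3$. The correct reason this case is harmless is the standing hypothesis of Section 5 that $(S,\fn)$ is regular: a regular local ring is a $\UFD$, so a height-one prime of $S$ is principal, hence generated by a single (regular) element and there is nothing to prove --- this is exactly how the paper disposes of that case. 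The fix is one line, but without it the case analysis is incomplete.
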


\begin{proof}
	We have $\operatorname{ht}(I) \leq 3$. If $\operatorname{ht}(I) = 1$, there is nothing to prove because $I$ becomes principal as $S$ is a UFD. If $\operatorname{ht}(I) = 3$, we use \cite[1.2.21]{BH}, which says that $I$ is generated by a regular sequence, and the claim follows. Thus we may assume $\operatorname{ht}(I) = 2$. The desired claim then follows from Observation \ref{alm}\footnote{Gheibi informed us that, in a joint work with Takahashi \cite{G}, they have recently generalized Observation 5.12 from prime ideals to arbitrary ideals.}.
\end{proof}

\begin{example}
	Let $P$ be the defining ideal of monomial curve 
	$R=k[t^{a},t^b,t^c]$ with $a<b<c$ and $(a,b,c)=1$. The
	following holds.
	\begin{enumerate}
	\item[(i)]  $\id_R(P/P^2)< \infty$ if and only if $R$ is  complete-intersection.
	\item[(ii)] If $R$ is not Gorenstein, then $\id_R(P^2/P^{(2)})= \infty$.
	\item[(iii)] If $R$ is not Gorenstein and $\mu(P)<6$, then $\id_R(P/P^{(2)})= \infty$.
	\item[(iv)]  If  $\id_R(N_P)< \infty$ then $R$ is  complete-intersection.
	
\end{enumerate}	
\end{example}

 For example, the ring $R=k[[t^3,t^4,t^5]]$ is a  monomial curve   of type two and three-generated defining ideal. Also, $R=k[[t^4,t^5,t^6]]$ is a complete-intersection  monomial curve.
 
\begin{proof}
(i) Suppose
$R$ is not complete-intersection. Then $P$ is not two-generated.
By \cite[2.11]{h}
 $P$ is 3-generated, so $\mu(P)=3$   and then  apply
{Corollary} \ref{513} to deduce 
$\id_R(P/P^2)= \infty$. The reverse implication is trivial.

(ii) By  the same citation,  $P^2/P^{(2)}$
is cyclic. If   it is  of finite injective dimension, then the ring should be Gorenstein by {Fact} \ref{cy}, which is excluded by our assumptions. Thus, $\id_R(P/P^{(2)})= \infty$.

(iii) Suppose $\id_R(P/P^{(2)})<\infty$. Recall that $\Omega_{S/k} \otimes_S R=R^3$ and that
$\Omega_{R/k}$ is torsion.
By \[
0  \longrightarrow P/P^{(2)} \stackrel{\delta}{\longrightarrow} \Omega_{S/k} \otimes_S R \longrightarrow \Omega_{R/k} \longrightarrow 0,
\] we see that rank of $P/P^{(2)}$ is   3 and $\depth(P/P^{(2)})>0$. In particular, $P/P^{(2)}$ is maximal Cohen-Macaulay and of finite injective dimension. Then $P/P^{(2)}=\oplus_{ 3} \omega_R $.
 Thus $$\mu(P/P^{(2)})=3\times \type (R)\geq 6.$$ Now, we compute it by hand. Namely,
 $$\mu(P/P^{(2)})=\dim(\frac{P/P^{(2)}}{\fm(P/P^{(2)})})=\dim(\frac{P/P^{(2)}}{P^{(2)}+\fn P/P^{(2)}})=P/(P^{(2)}+\fn P)\leq\footnote{This can be sharpened in a certain case using Eisenbud and Mazur. Their conjecture states that given an equicharacteristic zero regular local ring $(S, \mathfrak{n})$ and a prime ideal $P$, we have $P^{(2)} \subseteq \mathfrak{n} P$.} P/ \fn P=\mu(P)<6,$$this contradiction says that $\id_R(P/P^{(2)})=\infty$.
 
 (iv) See {Proposition} \ref{511}.
\end{proof}

\begin{remark}
	If $R$ is Gorenstein and $\id_R(I/I^2) < \infty$, then $I$ is generated by a regular sequence. Indeed, since $R$ is Gorenstein, $\pd_R(I/I^2) < \infty$. In view of \cite[main result]{b}, $I$ is generated by a regular sequence.
\end{remark}

In some how sense, the class of Cohen--Macaulay rings with minimal multiplicity is opposite to the class of Gorenstein rings. Also, the normal module $N_i$ isa  second syzygy over Cohen--Macaulay rings. This suggests:

\begin{observation}\label{m517}
Let $R$ be a $2$-dimensional Cohen--Macaulay ring with minimal multiplicity, and assume that the normal module $N_I$ is a third syzygy. One has
\[
\id_R(N_I)<\infty\Longrightarrow R \text{ is regular},
\]and so $I$ is generated by a regular sequence.
\end{observation}

\begin{proof}Without loss of generality we may assume that $k$ is infinite. 
Let $F$ be a free $R$-module such that
\[
\zeta:=\qquad
0 \longrightarrow N_I
\longrightarrow F
\longrightarrow \Omega^{-1}(N_I)
\longrightarrow 0
\]
is exact. 
Since $N_I$ is  third syzygy module, $\Omega^{-1}(N_I)$ is torsionless, that is submodule of a free module. Hence, for every parameter element $r\in R$,
\[
\Tor_1^R(R/rR,\Omega^{-1}(N_I))
=
\ker\!\left(
\Omega^{-1}(N_I)
\xrightarrow{\;r\;}
\Omega^{-1}(N_I)
\right)
=0.
\]

Therefore, tensoring $\zeta$ with $\overline{R}=R/rR$ yields an exact sequence
\[
\overline{\zeta}:=\qquad
0\longrightarrow
\overline{N_I}
\longrightarrow
\overline{F}
\longrightarrow
\overline{\Omega^{-1}(N_I)}
\longrightarrow 0.
\]

Let $\underline{x}=x_1, x_2$ be a system of parameters such that
\(
\mathfrak m^2=\underline{x}\,\mathfrak m.
\)
Set
\(
\overline{R}=R/(\underline{x})R.
\)
By repeatedly applying the above argument, the sequence
\(
\zeta\otimes_R \overline{R}
\)
remains exact.
In particular, $\overline{N_I}$ is a syzygy module over $\overline{R}$, and hence
\(
\overline{N_I}\subseteq
\mathfrak m_{\overline{R}}\,
\overline{R}^{\,n}
\)
for some integer $n$. Since $\mathfrak m_{\overline{R}}^2=0$, it follows that
\(
\mathfrak m_{\overline{R}}\overline{N_I}=0.
\)
Thus $\overline{N_I}$ is a vector space, i.e., a 
direct sum of residue field
\(
k=\overline{R}/\mathfrak m_{\overline{R}}.
	\) But $\depth(N_I)=\depth(\Hom_R(I/I^2,R)) \geq \min\{2,\depth(R)\} =2$ (see \cite[1.4.19]{BH}). So, it is maximal Cohen-Macaulay. In particular, $\underline{x}$ is both $R$-regular and $N_I$-regular sequence.
Then, by \cite[Proposition 3.1.15]{BH},
\[
\id_{\overline{R}}
\!\left(
\bigoplus k
\right)=\id_{\overline{R}}(\overline{N_I})
=
\id_R(N_I)-2
<\infty.
\]
Consequently,
\(
\id_{\overline{R}}(k)<\infty,
\)
and therefore $\overline{R}$ is a field.
Hence
\[
\mu(\mathfrak m)
=
d+\mu(\mathfrak m_{\overline{R}})
=
d
=
\dim R.
\]
Therefore $R$ is regular.
\end{proof}

One may consider other operations such as $\overline{I^{2}}$ the integral closure of $I^{2}$. Namely,  when is $\id_R(I/\overline{I^{2}}) < \infty$? Here, one needs more care. For example if $S$ is two-dimensional, it follows by Brian\c{c}on--Skoda Theorem that $\overline{I^{2}} \subseteq I$. Also, when $I$ is radical, one has 
$\overline{I^{2}} \subseteq I$.
\begin{example}
Let $S = k[[x,y]]$ be the formal power series ring over a field $k$, and let $\mathfrak{m} = (x,y)$ be the maximal ideal. Consider the ideal $I = (x^2, xy)$.
Then
$\id_R(I/\overline{I^{2}}) = \infty$.
\end{example}

\begin{proof}First, we show that
	$I / \overline{I^2}$ is a vector space over $k$:
\begin{itemize}
	\item $I^2 = (x^4, x^3y, x^2y^2)$,
	\item $\overline{I^2}$ contains $x^3$ and $x^2y$ because $(x^3)^2 = x^6 \in I^2$ and $(x^2y)^2 = x^4y^2 \in I^2$. Also, $xy^2\in\overline{I^2}$,
	\item $\mathfrak{n} I = (x,y)(x^2, xy) = (x^3, x^2y, xy^2) \subseteq \overline{I^2}$,
\end{itemize}

Thus $I / \overline{I^2}$ is a vector space over $k = S/\mathfrak{n}=R/\mathfrak{m}$. If $\id_R(I/\overline{I^{2}}) <\infty$, it follows that $R$ is regular. But, $R$ is not even an integral domain.
\end{proof}
\medskip
\section{(Semi)-injectivity of total quotient rings}

A ring $R$ of dimension $d := \dim(R)$ is called quasi-Gorenstein if $H^d_{\fm}(R) \cong E_R(k)$. When is $H^d_{\fm}(R) \twoheadrightarrow E_R(k)$ surjective? The following may be regarded as a higher version of \cite[15.17]{E}:

\begin{proposition}
	Suppose $R$ is complete, generically Gorenstein, and Cohen--Macaulay. Then $H^{\dim R}_{\fm}(R) \twoheadrightarrow E_R(k)$ is surjective.
\end{proposition}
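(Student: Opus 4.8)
The plan is to convert the desired surjection into an embedding of $R$ into its canonical module via Matlis duality, and then to extract that embedding from the generically Gorenstein hypothesis. Write $d:=\dim R$. Since $R$ is complete and Cohen-Macaulay it possesses a canonical module $\omega_R$; set $(-)^{\vee}:=\Hom_R(-,E_R(k))$ for the Matlis duality functor. I will use that $(-)^{\vee}$ is exact and contravariant, that $R^{\vee}=E_R(k)$, and that on Noetherian (resp.\ Artinian) modules one has $(-)^{\vee\vee}\cong\mathrm{id}$ because $R$ is complete. Grothendieck local duality over the complete Cohen-Macaulay ring $R$ (see \cite[Theorem 3.5.8]{BH}) then gives the identification $H^{d}_\fm(R)\cong\Hom_R(R,\omega_R)^{\vee}=\omega_R^{\vee}$.

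With this dictionary the statement becomes formal. Applying the exact functor $(-)^{\vee}$ converts a surjection $\omega_R^{\vee}\twoheadrightarrow R^{\vee}$ (equivalently $H^{d}_\fm(R)\twoheadrightarrow E_R(k)$) into an injection $R=R^{\vee\vee}\hookrightarrow\omega_R^{\vee\vee}=\omega_R$, and conversely; dualizing an injection yields a surjection by exactness. Hence it suffices to construct an $R$-monomorphism $R\hookrightarrow\omega_R$. To produce it I would invoke \cite[Proposition 3.3.18]{BH}: since $R$ is Cohen-Macaulay and generically Gorenstein, $\omega_R$ is isomorphic to an ideal $I\lhd R$ containing an $R$-regular element $x$ (when $R$ is already Gorenstein one simply takes $I=R$, $x=1$). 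Because $x$ is a nonzerodivisor, multiplication $R\lo I$, $r\mapsto rx$, is injective, so composing with $I\cong\omega_R$ gives $R\hookrightarrow\omega_R$. Dualizing by $(-)^{\vee}$ returns the surjection $\omega_R^{\vee}\twoheadrightarrow R^{\vee}$, that is $H^{d}_\fm(R)\twoheadrightarrow E_R(k)$, as claimed.

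The single substantive step is the embedding $R\hookrightarrow\omega_R$, and the place to be careful is exactly where generic Gorensteinness enters: what is really used is that at each minimal prime $\fp$ (equivalently each associated prime, since a complete Cohen-Macaulay ring is equidimensional with $\Ass R=\Min R$) the localization $(\omega_R)_\fp\cong\omega_{R_\fp}$ is free of rank one over the Artinian Gorenstein ring $R_\fp$, which is precisely what allows $\omega_R$ to be realized as an ideal containing a nonzerodivisor. If one prefers to avoid citing \cite[Proposition 3.3.18]{BH}, the same embedding can be produced directly: local freeness of rank one at the finitely many minimal primes, combined with a prime-avoidance choice, yields an element $\omega\in\omega_R$ generating $(\omega_R)_\fp$ for every minimal $\fp$; then $\Ann_R(\omega)=0$, because an ideal vanishing at all of $\Ass R=\Min R$ must be zero, so $r\mapsto r\omega$ embeds $R$ in $\omega_R$. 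Either way, the Matlis-duality reduction is the engine and the embedding is the only genuine obstacle.
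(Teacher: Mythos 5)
Your proof is correct and follows essentially the same route as the paper: both realize $\omega_R$ as an ideal of $R$ via the generically Gorenstein hypothesis, embed $R\hookrightarrow\omega_R$ by multiplication by a regular element of that ideal, and then apply Matlis/local duality to turn this injection into the surjection $H^{d}_\fm(R)\twoheadrightarrow E_R(k)$. The only cosmetic difference is that the paper treats $d=0$ and the case $\omega_R=R$ separately, while you handle everything uniformly through \cite[Proposition 3.3.18]{BH}.
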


\begin{proof}
	Let $d := \dim(R)$. Since the ring is complete and Cohen--Macaulay, $\omega_R$ exists. Because $R$ is generically Gorenstein, $\omega_R \lhd R$. Suppose first that $d = 0$. Then $R$ is Gorenstein, so $\omega_R = E_R(k) = R$. To see that $H^d_{\fm}(R) \twoheadrightarrow E_R(k)$, it remains to note that $H^d_{\fm}(R) = R$. Hence we may assume $d > 0$. Either $\omega_R = R$, or $\omega_R \lhd R$ is of height one. Without loss of generality, we may assume that $\omega_R \lhd R$ is of height one. The Cohen--Macaulay assumption gives $1 = \operatorname{ht}(\omega_R) = \operatorname{grade}(\omega_R, R)$. Let $x \in \omega_R$ be a regular element. This yields an embedding $R \hookrightarrow Rx \subseteq \omega_R$. Applying $\Hom_R(-, E_R(k))$ and using local duality, we obtain the commutative diagram
	\[
	\begin{CD}
	\omega^{\vee} @>>> R^{\vee} @>>> 0 \\
	@A{\cong}AA @A{\cong}AA \\
	H^d_{\fm}(R) @>>> E_R(k)
	\end{CD}
	\]
	In other words, the induced map $H^d_{\fm}(R) \twoheadrightarrow E_R(k)$ is surjective, as claimed.
\end{proof}

\begin{proposition}
	Let $d := \dim(R)$. Suppose $H^d_{\fm}(R) \twoheadrightarrow E_R(k)$. If $d = 0$, then $R$ is Gorenstein.
\end{proposition}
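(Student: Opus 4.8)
The plan is to reduce immediately to the zero-dimensional case and exploit the fact that $H^0_\fm(R)$ is simply the $\fm$-torsion submodule of $R$. When $d=0$, local cohomology in degree zero is just $\Gamma_\fm(R)$, and since every element of $R$ is killed by a power of $\fm$ (the ring being artinian local), we have $H^0_\fm(R)=R$. So the hypothesized surjection becomes a surjection $R\twoheadrightarrow E_R(k)$.

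First I would record that $R$ is artinian local, so $\ell(R)<\infty$ and likewise $\ell(E_R(k))<\infty$; in fact by Matlis duality $\ell(E_R(k))=\ell(R)$. Next, from the surjection $R\twoheadrightarrow E_R(k)$ together with the equality of lengths, I would conclude that the map is in fact an isomorphism: a surjection of finite-length modules of equal length is bijective. Thus $R\cong E_R(k)$ as $R$-modules. Finally, I would invoke the standard characterization that a zero-dimensional local ring is Gorenstein precisely when its injective hull $E_R(k)$ is isomorphic to $R$ itself — equivalently, when $R$ is self-injective, equivalently $\id_R(R)=0<\infty$ (see \cite[Theorem 3.2.10]{BH} or the definition via $\dim_k\Ext^0$). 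This yields that $R$ is Gorenstein.

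The only delicate point, and the step I expect to require the most care, is justifying that the surjection is an isomorphism rather than merely arguing heuristically: I must verify the length equality $\ell(E_R(k))=\ell(R)$, which follows from Matlis duality over the complete (or here artinian, hence complete) local ring, and then apply the elementary fact that an $R$-linear surjection between modules of the same finite length has trivial kernel. A surjection $\phi\colon R\to E_R(k)$ gives $\ell(\ker\phi)=\ell(R)-\ell(E_R(k))=0$, so $\ker\phi=0$. Once $R\cong E_R(k)$ is established, the Gorenstein conclusion is immediate from the definition of Gorenstein in dimension zero, so no further obstacle remains.
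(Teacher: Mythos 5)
Your proof is correct, and it reaches the same intermediate goal as the paper --- showing that the surjection $R=H^0_\fm(R)\twoheadrightarrow E_R(k)$ is actually an isomorphism --- but by a different mechanism. You count lengths: since $R$ is artinian (hence complete), Matlis duality gives $\ell(E_R(k))=\ell(\Hom_R(R,E_R(k)))=\ell(R)<\infty$, and a surjection between modules of equal finite length has zero kernel. The paper instead sets $I:=\ker(f)$, so that $E_R(k)\cong R/I$, and then uses the embedding of $R$ into its injective hull $E(R)=\oplus E_R(k)=\oplus R/I$: the ideal $I$ annihilates $\oplus R/I$, hence annihilates $R$, forcing $I=0$. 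Your length argument is arguably the more standard and self-contained one (it needs only the basic Matlis duality statement over an artinian local ring), while the paper's argument avoids any length computation at the cost of invoking the structure of $E(R)$ as a finite direct sum of copies of $E_R(k)$. Both then conclude identically: $R\cong E_R(k)$ means $R$ is self-injective, i.e.\ $\id_R(R)=0$, so $R$ is Gorenstein. One small point of care in your write-up: you should note explicitly (as you implicitly do) that a zero-dimensional noetherian local ring is automatically complete, so Matlis duality applies without any extra hypothesis; with that remark in place there is no gap.
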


\begin{proof}
	Recall that
	\[
	\begin{CD}
	H^d_{\fm}(R) @>>> E_R(k) @>>> 0 \\
	@V{=}VV @V{=}VV \\
	R @>f>> E_R(k)
	\end{CD}
	\]
	Let $I := \ker(f)$. Then $R/I \cong E_R(k)$. Recall that $R \subseteq E(R) = \bigoplus E_R(k) = \bigoplus (R/I)$. Hence $I = 0$. Thus $R \cong E_R(k)$. Consequently, $R$ is Gorenstein.
\end{proof}

Let $R$ be a $1$-dimensional integral domain and let $Q$ be its fraction field. Matlis proved that $Q / \omega_R \cong E_R(k)$. Here is the higher version (recall that $Q/\omega_R = H^1_{\fm}(\omega_R)$):

\begin{proposition}
	Suppose $R$ is complete, an integral domain, and Cohen--Macaulay. Let $I$ be a nonzero Cohen--Macaulay ideal. Let $d := \dim(R)$. The following are equivalent:
	\begin{enumerate}
		\item[(i)] $\id_R(I) < \infty$,
		\item[(ii)] $I \cong \omega_R$,
		\item[(iii)] $H^d_{\fm}(I) \cong E_R(k)$.
	\end{enumerate}
\end{proposition}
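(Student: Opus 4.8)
The plan is to route every implication through $(ii)$, so that it suffices to prove $(i)\Leftrightarrow(ii)$ and $(ii)\Leftrightarrow(iii)$, with $(ii)$ read as an isomorphism $I\cong\omega_R$ (the canonical module being determined only up to isomorphism). The implication $(ii)\Rightarrow(i)$ is immediate: over a Cohen-Macaulay local ring the canonical module satisfies $\id_R(\omega_R)=d<\infty$ (see \cite[3.3.10]{BH}).

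For $(i)\Rightarrow(ii)$ I would first observe that $I$ is a \emph{maximal} Cohen-Macaulay module: since $R$ is a domain and $I\neq0$ we have $\Ann(I)=0$, hence $\dim(I)=d$, and the Cohen-Macaulay hypothesis then forces $\depth(I)=d$. Combined with $\id_R(I)<\infty$, the theorem of Sharp already used in Section 3 yields $I\cong\omega_R^{\oplus n}$ for some $n$. To pin down $n$ I would compare ranks at the generic point: since $R$ is a domain it is generically Gorenstein, so $\omega_R$ has rank one, while $I\otimes_R Q\cong Q$ shows $I$ has rank one as well; comparing ranks forces $n=1$, i.e.\ $I\cong\omega_R$.

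The equivalence $(ii)\Leftrightarrow(iii)$ is the technical heart and rests on local duality, available because $R$ is complete and Cohen-Macaulay: for the top local cohomology one has $H^d_\fm(M)^\vee\cong\Hom_R(M,\omega_R)$, where $(-)^\vee:=\Hom_R(-,E_R(k))$ is Matlis duality. Taking $M=\omega_R$ and using $\Hom_R(\omega_R,\omega_R)\cong R$ (see \cite[3.3.4]{BH}) gives $H^d_\fm(\omega_R)^\vee\cong R$, hence $H^d_\fm(\omega_R)\cong R^\vee=E_R(k)$, which proves $(ii)\Rightarrow(iii)$. Conversely, $H^d_\fm(I)\cong E_R(k)$ forces $\Hom_R(I,\omega_R)\cong E_R(k)^\vee\cong R$, where completeness is precisely what guarantees $E_R(k)^\vee\cong R$; then, because $I$ is maximal Cohen-Macaulay, the canonical duality $M\mapsto\Hom_R(M,\omega_R)$ satisfies $M\cong M^{\dagger\dagger}$ (see \cite[3.3.10]{BH}), so $I\cong\Hom_R(\Hom_R(I,\omega_R),\omega_R)\cong\Hom_R(R,\omega_R)=\omega_R$, proving $(iii)\Rightarrow(ii)$.

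The main obstacle I expect is the bookkeeping in the last paragraph: one must apply local duality and Matlis duality in the correct variance and invoke completeness exactly where $E_R(k)^\vee\cong R$ is needed, and the rank computation in $(i)\Rightarrow(ii)$ must genuinely use that $R$ is a domain (hence generically Gorenstein) to rule out $n>1$. Everything else is a direct assembly of the canonical-module machinery from \cite{BH}.
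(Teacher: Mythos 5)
Your proposal is correct and follows essentially the same route as the paper: Sharp's theorem (via \cite[3.3.28]{BH}) for $(i)\Rightarrow(ii)$, local duality plus $\Hom_R(\omega_R,\omega_R)\cong R$ for $(ii)\Rightarrow(iii)$, and the canonical duality $D^2=\mathrm{Id}$ on maximal Cohen--Macaulay modules for $(iii)\Rightarrow(ii)\Rightarrow(i)$. If anything you are slightly more careful than the paper, which cites \cite[3.3.28]{BH} without explaining why the number of copies of $\omega_R$ is one; your rank-one argument at the generic point fills that in.
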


\begin{proof}
	\text{(i) $\Rightarrow$ (ii):} Since $\dim I = d$, the ideal $I$ is maximal Cohen--Macaulay. The desired claim is in \cite[3.3.28]{BH}.
	
	\text{(ii) $\Rightarrow$ (iii):} By right exactness, $H^d_{\fm}(\omega_R) = H^d_{\fm}(R) \otimes_R \omega_R$. Recall that $H^d_{\fm}(\omega_R)$ is artinian. Applying Matlis duality, we obtain
	\[
	\begin{array}{rcl}
	H^d_{\fm}(\omega_R) = H^d_{\fm}(\omega_R)^{\vee\vee}
	& = & \Hom_R\bigl(H^d_{\fm}(R) \otimes_R \omega_R,\; E_R(k)\bigr)^{\vee} \\
	& = & \Hom_R\bigl(\omega_R,\; \Hom_R(H^d_{\fm}(R), E_R(k))\bigr)^{\vee} \\
	& = & \Hom_R(\omega_R, \omega_R)^{\vee} = R^{\vee} = E_R(k).
	\end{array}
	\]
	
	\text{(iii) $\Rightarrow$ (i):} Let $D(-) := \Hom_R(-, \omega_R)$. Some authors also denote it by $I^\dagger$. Applying local duality together with the assumption yields $D(I) = H^d_{\fm}(I)^{\vee} \cong E_R(k)^{\vee} \cong \widehat{R} \cong R$. Over a maximal Cohen--Macaulay modules over Cohen--Macaulay rings, one has $D^2(-) \cong \operatorname{Id}(-)$. Hence
	\[
	I \cong D^2(I) \cong D(R) \cong \omega_R.
	\]
	Since $\omega_R$ has finite injective dimension, $I$ also has finite injective dimension.
\end{proof}

\begin{corollary}
	Let $R$ be a $1$-dimensional integral domain and let $Q$ be its fraction field. Let $I \lhd R$ be nonzero. Then $Q / I \cong E_R(k)$ if and only if $I \cong \omega_R$.
\end{corollary}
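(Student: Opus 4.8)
The plan is to read the corollary off the previous proposition by specializing to $d=\dim(R)=1$, so the real work is only in checking that the hypotheses are automatically satisfied and in dispensing with the completeness assumption. First I would record two reductions. Since $R$ is a one-dimensional integral domain it is Cohen-Macaulay, and any nonzero ideal $I\lhd R$ is a Cohen-Macaulay ideal: as a nonzero submodule of the domain $R$ it is torsion-free of rank one, hence contains an $R$-regular element, so $\depth(I)\geq1=\dim(I)$ and $I$ is maximal Cohen-Macaulay. Thus $I$ meets the standing hypotheses of the proposition. Next I would invoke the identification $Q/I=H^1_\fm(I)$ recalled in the text; under it the condition $Q/I=E_R(k)$ is precisely condition iii) of the proposition with $d=1$, while $I=\omega_R$ is condition ii). Consequently the corollary is exactly the equivalence ii) $\Leftrightarrow$ iii) of the proposition, read at $d=1$.

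Granting this, only the completeness hypothesis separates us from a one-line deduction. To remove it I would pass to the $\fm$-adic completion $\widehat{R}$ and transport both sides by faithfully flat base change: $E_R(k)=E_{\widehat R}(k)$, local cohomology commutes with completion so $H^1_\fm(I)=H^1_{\widehat\fm}(\widehat I)$ with $\widehat I=I\otimes_R\widehat R$, the canonical module commutes with completion so $\omega_{\widehat R}=\omega_R\otimes_R\widehat R$, and $I\cong\omega_R$ holds over $R$ if and only if $\widehat I\cong\omega_{\widehat R}$ holds over $\widehat R$. Hence each of the two conditions in the corollary holds over $R$ exactly when its completed counterpart holds over $\widehat R$, and it suffices to establish the equivalence over the complete ring $\widehat R$.

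The main obstacle is that the completion of a one-dimensional domain need not be a domain, so one cannot simply quote the proposition, whose statement asks for an integral domain, over $\widehat R$. The resolution I expect to use is to observe that the domain hypothesis is never invoked in the portion of the proposition's proof establishing ii) $\Leftrightarrow$ iii): the step ii) $\Rightarrow$ iii) uses only that $\widehat R$ is complete Cohen-Macaulay, via right-exactness of top local cohomology, the local-duality identity $H^d_\fm(\widehat R)^\vee=\omega_{\widehat R}$ and $\Hom_{\widehat R}(\omega_{\widehat R},\omega_{\widehat R})=\widehat R$; and the step iii) $\Rightarrow$ ii) uses only that $\widehat I$ is maximal Cohen-Macaulay together with $\Hom_{\widehat R}(\widehat I,\omega_{\widehat R})\cong H^d_\fm(\widehat I)^\vee=E_{\widehat R}(k)^\vee=\widehat R$ and the reflexivity $D^2=\mathrm{Id}$ for $D:=\Hom_{\widehat R}(-,\omega_{\widehat R})$ on maximal Cohen-Macaulay modules, which forces $\widehat I\cong D(\widehat R)\cong\omega_{\widehat R}$ without any reference to a single copy of $\omega$. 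Carrying out this equivalence over $\widehat R$ and combining it with the base change above yields the corollary; I expect the only delicate point to be confirming that no hidden use of the domain property creeps into the duality computation, everything else being routine descent.
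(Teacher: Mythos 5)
Your proof is correct and follows the route the paper intends: the corollary is just the case $d=1$ of the preceding proposition via the identification $Q/I=H^1_\fm(I)$, with any nonzero ideal of a one-dimensional domain automatically maximal Cohen--Macaulay. The paper supplies no proof at all, and your passage to $\widehat R$ together with the observation that the equivalence $ii)\Leftrightarrow iii)$ never actually uses the domain hypothesis is a genuine (and needed) repair of the gap left by the fact that the proposition assumes completeness while the corollary does not.
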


The following was proved by Matlis in the $1$-dimensional complete Gorenstein domain case, and by Auslander \cite{comment} in the $\fm$-adic complete case. Since completeness in the $\fm$-adic topology implies completeness in the $R$-topology, this extends both:

\begin{observation}
	Let $R$ be a quasi-local domain complete in the $R$-topology. Then $\Ext^1_R(Q/R, R) \cong R$.
\end{observation}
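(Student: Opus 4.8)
The plan is to avoid the short exact sequence $0\to R\to Q\to Q/R\to 0$ directly---which would force me to analyze $\Ext^1_R(Q,R)$, a term that is essentially a $\varprojlim^1$ and whose vanishing is tantamount to the completeness hypothesis---and instead to compute $\Ext^1_R(Q/R,R)$ as an inverse limit of the $\Ext$'s of cyclic modules. First I would write $Q$ as the filtered union $\bigcup_{0\neq a\in R}\tfrac1a R=\varinjlim_a \tfrac1a R$, the directed set being the nonzero elements of $R$ ordered by divisibility. Passing to the quotient by $R$ identifies $Q/R\cong \varinjlim_a R/aR$, where the transition map attached to $a\mid ab$ is multiplication by $b$, since $\tfrac{r}{a}=\tfrac{rb}{ab}$.

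Next I would compute the $\Ext$ groups of the cyclic pieces. From the presentation $0\to R\xrightarrow{a} R\to R/aR\to 0$ and the fact that $R$ is a domain, applying $\Hom_R(-,R)$ gives $\Hom_R(R/aR,R)=(0:_R a)=0$ and $\Ext^1_R(R/aR,R)\cong R/aR$ (using $\Ext^1_R(R,R)=0$). The step that must be checked with care is the behavior of the transition maps: lifting the multiplication-by-$b$ map $R/aR\to R/ab R$ to a map of the two presentations, one finds that the comparison map in homological degree one is the identity on $R$, so the induced map on $\Ext^1$ is the natural surjection $R/ab R\twoheadrightarrow R/aR$. Thus $\{\Ext^1_R(R/aR,R)\}_a\cong\{R/aR\}_a$ as an inverse system with the canonical projections, whose limit is exactly the completion $\widehat{R}$ of $R$ in the $R$-topology.

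Finally I would pass to the limit. Because $R$ is a domain, $\Hom_R(R/aR,R)=0$ for every nonzero $a$, so the inverse system $\{\Hom_R(R/aR,R)\}_a$ is identically zero and all of its derived limits $\varprojlim^p$ vanish. Feeding this into the hyperderived-functor spectral sequence $\varprojlim^p_a \Ext^q_R(R/aR,R)\Rightarrow \Ext^{p+q}_R(\varinjlim_a R/aR,R)$---which comes from $\RHom_R(\operatorname{hocolim}_a R/aR,R)\cong\operatorname{holim}_a\RHom_R(R/aR,R)$ and is valid over an arbitrary directed index set, so no countability of the divisibility poset is needed---collapses total degree one to $\Ext^1_R(Q/R,R)\cong \varprojlim_a \Ext^1_R(R/aR,R)\cong \varprojlim_a R/aR=\widehat{R}$. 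Invoking the hypothesis that $R$ is complete in the $R$-topology gives $\widehat{R}\cong R$, hence $\Ext^1_R(Q/R,R)\cong R$, as asserted. The main obstacle is the bookkeeping in the second step---pinning down that the transition maps are the honest projections, so that the inverse limit is the genuine $R$-topology completion rather than a twisted variant---together with the justification that $\Ext^1$ commutes with this direct limit, which is precisely what the vanishing of $\Hom_R(R/aR,R)$ buys us.
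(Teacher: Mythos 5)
Your argument is correct, but it is a genuinely different route from the paper's. The paper simply applies $\Hom_R(-,R)$ to $0\to R\to Q\to Q/R\to 0$ and uses the two vanishings $\Hom_R(Q,R)=0$ and $\Ext^1_R(Q,R)=0$, the latter being exactly what completeness in the $R$-topology supplies; the connecting map then identifies $\Ext^1_R(Q/R,R)$ with $\Hom_R(R,R)\cong R$ in two lines. You instead compute $\Ext^1_R(Q/R,R)$ unconditionally: from $Q/R\cong\varinjlim_a R/aR$ and the (correctly identified) transition maps you get the inverse system $\{R/aR\}$ with the canonical projections, and the $\varprojlim^p$--$\Ext^q$ spectral sequence (Jensen's spectral sequence for an arbitrary directed poset; convergence in total degree one is unproblematic here precisely because the whole $q=0$ row $\varprojlim^p\Hom_R(R/aR,R)$ vanishes, $R$ being a domain) collapses to $\Ext^1_R(Q/R,R)\cong\varprojlim_a R/aR=\widehat{R}$. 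What this buys you is strictly more than the Observation: the isomorphism $\Ext^1_R(Q/R,R)\cong\widehat{R}$ holds for any quasi-local domain, and it simultaneously explains the converse statement (Observation \ref{CON}) that the paper proves separately; indeed the paper's own bridge between the two viewpoints is the exact sequence $0\to R\to\varprojlim_r R/rR\to\Ext^1_R(Q,R)\to 0$ invoked there. The price is heavier machinery where a single long exact sequence suffices, and you should note that ``complete in the $R$-topology'' is being used in the sense that the canonical map $R\to\widehat{R}$ is an isomorphism (not merely that $\widehat R$ is abstractly isomorphic to $R$), which is what your last step needs.
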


\begin{proof}
	Completeness in the $R$-topology implies $\Ext^1_R(Q, R) = 0$. Also, $\Hom_R(Q, R) = 0$. The sequence $0 \to R \to Q \to Q/R \to 0$ gives
	\[
	0 = \Hom_R(Q, R) \to \Hom_R(R, R) \to \Ext^1_R(Q/R, R) \to \Ext^1_R(Q, R) = 0,
	\]
	i.e., the claim follows.
\end{proof}

Noetherian local rings are reduced\footnote{Sorry, we use ``reduced'' for different concepts.} by Krull's intersection theorem. So the following is a converse to Auslander \cite{comment}:

\begin{observation}\label{CON}
	Let $R$ be a quasi-local reduced domain such that $\Ext^1_R(Q/R, R) \cong R$. Then $R$ is complete in the $R$-topology.
\end{observation}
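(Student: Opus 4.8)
The plan is to reduce the statement to the single vanishing $\Ext^1_R(Q,R)=0$ and then to kill this group by exploiting its divisibility. First I would apply $\Hom_R(-,R)$ to the canonical sequence $0\to R\to Q\to Q/R\to 0$ and write down the long exact sequence
$$0\to \Hom_R(Q/R,R)\to \Hom_R(Q,R)\to \Hom_R(R,R)\to \Ext^1_R(Q/R,R)\to \Ext^1_R(Q,R)\to 0,$$
which terminates on the right because $\Ext^1_R(R,R)=0$. Since $R$ is a domain and $Q/R$ is torsion, $\Hom_R(Q/R,R)=0$; and since $R$ is reduced in the sense indicated by the footnote preceding the statement, i.e. $\Hom_R(Q,R)=0$ (no nonzero divisible submodule, $\bigcap_{r\neq 0} rR=0$), the middle terms drop out. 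Recalling Matlis' identification of $\Ext^1_R(Q/R,R)$ with the completion $\widehat R$ of $R$ in the $R$-topology, under which the connecting map $R=\Hom_R(R,R)\to\Ext^1_R(Q/R,R)$ is the canonical completion map, the long exact sequence collapses to the short exact sequence $0\to R\to\Ext^1_R(Q/R,R)\to\Ext^1_R(Q,R)\to 0$, in which $R$ is complete in the $R$-topology exactly when $\Ext^1_R(Q,R)=0$.

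Next I would feed in the hypothesis. Because $\Ext^1_R(Q/R,R)\cong R$, the short exact sequence exhibits $\Ext^1_R(Q,R)$ as the cokernel of an injective $R$-linear self-map of $R$. Any such map is multiplication by some nonzero $c\in R$, so $\Ext^1_R(Q,R)\cong R/cR$ with $c\neq 0$; here the injectivity (hence $c\neq 0$) is exactly what the reducedness hypothesis buys us.

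The crux is then that $\Ext^1_R(Q,R)$ is uniquely divisible, that is, a $Q$-vector space: for every nonzero $r\in R$, multiplication by $r$ is an automorphism of $Q$ and therefore induces an automorphism of $\Ext^1_R(Q,R)$. On the other hand, on $R/cR$ multiplication by $c$ is the zero map. A module on which a single nonzero $c$ acts simultaneously invertibly and as zero must be zero, so $\Ext^1_R(Q,R)=0$, and hence $R$ is complete. The main obstacle is not any of these formal steps but the correct invocation of the underlying Matlis theory: one must be certain that $\Ext^1_R(Q/R,R)$ is genuinely the $R$-topological completion and that vanishing of $\Ext^1_R(Q,R)$ is \emph{equivalent} to completeness (the converse half of the preceding observation). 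Granting this, reducedness is precisely what converts the long exact sequence into a short one, and the divisibility of $\Ext^1_R(Q,R)$ finishes the argument.
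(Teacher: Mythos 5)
Your proof is correct and takes essentially the same route as the paper: the identical long exact sequence from $0\to R\to Q\to Q/R\to 0$, the same key fact that $\Ext^1_R(Q,R)$ is a $Q$-vector space realized as the cokernel of an injective self-map of $R$ (the paper localizes at $(0)$ where you factor through $R/cR$, which is the same computation), and the same Matlis-theoretic input identifying completeness in the $R$-topology with the vanishing of $\Ext^1_R(Q,R)$, which the paper phrases via the exact sequence $0\to R\to \varprojlim_{r} R/rR\to \Ext^1_R(Q,R)\to 0$ coming from the reducedness hypothesis $\bigcap_{r\neq 0}rR=0$. No gaps.
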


\begin{proof}
	The sequence $0 \to R \to Q \to Q/R \to 0$ implies
	\[
	0 = \Hom_R(Q, R) \to \Hom_R(R, R) \to \Ext^1_R(Q/R, R) \to \Ext^1_R(Q, R) \to \Ext^1_R(R, R) = 0.
	\]
	Recall that $\Ext^1_R(Q, R) = \bigoplus_I Q$ is a $Q$-vector space. Localizing the above sequence and using the assumption $\Ext^1_R(Q/R, R) \cong R$, we deduce that $\Ext^1_R(Q, R) = \bigoplus_I Q = 0$. The reduced assumption implies that the sequence
	\[
	0 \to R \to \prod_{r \in R} R/rR \to \Ext^1_R(Q, R) \to 0
	\]
	is exact. Since $\Ext^1_R(Q, R) = 0$, we have $R \cong \prod_{r \in R} R/rR$, as claimed.
\end{proof}

\medskip
\section{Reflexivity and injective dimension}

We start by recalling the following result:

\begin{fact}\label{fullau}
	(Auslander; see \cite[I. Proposition 4.14]{PS}). Suppose $M$ has finite projective dimension and there exists $\fp \in \Supp(M) \cap \Ass(R)$. Then $\Supp(M) = \Spec(R)$.
\end{fact}

\begin{lemma}\label{sfs}
	Suppose $R$ is complete and there exists a torsionless module $M$ of finite injective dimension. Then $\Supp(M) = \Spec(R)$.
\end{lemma}

\begin{proof}
	Let $\tilde{M} := \Hom_R(\omega_R, M)$. Then
	\[
	\Ass_R(\tilde{M}) = \Supp(\omega_R) \cap \Ass(M) = \Spec(R) \cap \Ass(M) = \Ass(M).
	\]
	Since $M$ is torsionless, $M \subseteq R^n$ and therefore $\Ass_R(\tilde{M}) = \Ass(M) \subseteq \Ass(R)$. Recall that $\tilde{M}$ has finite projective dimension. By Fact \ref{fullau}, we have $\Supp(\tilde{M}) = \Spec(R)$. Consequently, $\Ass(R) = \Ass_R(\tilde{M}) = \Ass(M) \subseteq \Ass(R)$. The claim follows.
\end{proof}

The following extends Corollary \ref{pgen}:

\begin{lemma}\label{in}
	Suppose there exists a nonzero torsionless module $M$ of finite injective dimension. Then $\widehat{R}$ is generically Gorenstein.
\end{lemma}

\begin{proof}
	We may assume that $R$ is complete. The ring $R$ is unmixed. Let $\fp \in \Ass(R) = \min(R)$. By Lemma \ref{sfs}, we have $\fp \in \Supp(M)$. Hence $M_{\fp}$ is a nonzero module of finite injective dimension over the artinian ring $R_{\fp}$. Since $\id(-)$ is bounded by depth, we deduce that $M_{\fp} \cong \bigoplus_{n>0} \omega_{R_{\fp}}$. Localization of a torsionless module is again torsionless. It follows that the canonical module of $R_{\fp}$ is torsionless. This implies that $R_{\fp}$ is Gorenstein (see \cite[Corollary 3.7]{moh2}).
\end{proof}

The following is our first main result of this section.

\begin{proposition}\label{74}
	Suppose there exists a nonzero reflexive module $L$ of finite injective dimension. Then $\widehat{R}$ is quasi-normal.
\end{proposition}

\begin{proof}
	We may assume that $R$ is complete. Let $\fp \in \Spec(R)$ be of height one. By Lemma \ref{in}, $\fp \in \Supp(L)$. Set $A := R_{\fp}$ and $M := L_{\fp}$. Then $\id_A(M) = \depth_A(A) = 1$. Consequently, $\Ext_R^2(\Tr(M^\ast), (M^\ast)^\ast) = 0$, where $\Tr(-)$ denotes Auslander's transpose (see \cite{AB2} for its definition). Then the natural map
	\[
	f : ((M^\ast)^\ast) \otimes_R (M^\ast) \longrightarrow \Hom_R((M^\ast)^\ast, (M^\ast)^\ast)
	\]
	is surjective, because the cokernel of $f$ is $\Ext_R^2(\Tr(M^\ast), (M^\ast)^\ast) = 0$ (see \cite[Proposition (2.6)(a)]{AB2}). It follows that $M^\ast$ is free as an $A$-module. Since $M$ is reflexive, $M \cong M^{\ast\ast}$ is also free. Thus $\id(A)$ is finite, which means that $A = R_{\fp}$ is Gorenstein. Hence $R$ satisfies $(G_1)$.
	
	Following Bass's conjecture, $R$ is Cohen--Macaulay. It remains to note that a ring satisfying $(S_2)$ and $(G_1)$ is quasi-normal.
\end{proof}

\begin{corollary}	
	Suppose $R$ is a quotient of a complete regular ring $S$ by an ideal $I$, and assume the injective dimension of the normal bundle $N$ is finite. Then $R$ is quasi-normal.
\end{corollary}

\begin{proof}
	This is similar to Proposition \ref{511}.
\end{proof}
\begin{corollary}	
Suppose $R$ is a quotient of a complete regular ring $S$ by a radical ideal $I$, and assume the injective dimension of the symbolic normal bundle $(I/I^{(2)})^\ast$ is finite. Then $R$ is quasi-normal.
\end{corollary}

\begin{proof}
This is similar to {Corollary}  \ref{511a}.
\end{proof}

\begin{corollary}
	Let $M$ be reflexive such that $\id_R(M) < \infty$ and $\mu(M) \leq 2$. Then $R$ is Gorenstein.
\end{corollary}

\begin{proof}
	Without loss of generality, we may assume that $(R,\fm)$ is complete in the $\fm$-adic topology. By the above proposition, the ring is quasi-normal. Now apply Proposition \ref{pqn}.
\end{proof}

\begin{corollary}
	Let $(R,\fm)$ be of type at most two, and let $M$ be an indecomposable reflexive module such that $\id_R(M) < \infty$ and $\mu(M) \leq 4$. Then $R$ is Gorenstein.
\end{corollary}

\begin{corollary}\label{cor7.7}
	Let $(R,\fm)$ be of type two, and let $M$ be a module such that $\id_R(M) < \infty$. Then $\mu(M)$ is even.
\end{corollary}

An $R$-module $M$ is called $k$-torsionless if $\Ext_R^i(\Tr M, R) = 0$ for all $i \leq k$. Recall that torsionless means $1$-torsionless and reflexive means $2$-torsionless. 

\begin{question}
	Let $k \geq 0$. Suppose there exists a nonzero $(k+1)$-torsionless module of finite injective dimension. When does $R$ satisfy property $(G_k)$?
\end{question}

\begin{proposition}\label{remf}
	Let $A$ be a homomorphic image of a Gorenstein ring. Suppose a nonzero module $N$ is $(k+2)$-torsionless and has finite injective dimension. Then $A$ satisfies property $(G_k)$.
\end{proposition}

\begin{proof}
	Let $\fp \in \Spec(A)$ be of height at most $k$. Set $R := A_{\fp}$ and $M := N_{\fp}$. The problem reduces to showing that $R$ is Gorenstein. The ring has a canonical module. This helps us apply Lemma \ref{sfs}, and thus $M$ is nonzero. Then, without loss of generality, we may assume in addition that $k = d := \dim R$, since $\ell$-torsionless behaves well with respect to localization. By definition, $M$ is reflexive and $\Ext_R^i(M^\ast, R) = 0$ for all $i \leq d$, because $M^\ast$ is the second syzygy of $\Tr M$. Thanks to Bass, $d = \dim R = \depth(R)$. We proceed by induction on $d$ to deduce that $M$ is free. The base case $d = 0$ is clear from Lemma \ref{in}. Using the induction hypothesis, we may assume in addition that $M$ is locally free over the punctured spectrum. Let $L := M^\ast$. Then $L$ is locally free over the punctured spectrum and $\id_R(L^\ast) \leq d$. Hence $\Ext_R^{d+1}(\Tr L, L^\ast) = 0$ and $\Ext_R^i(L, R) = 0$ for all $i \leq d$. Under these assumptions, Kimura proved in \cite[Theorem 2.4]{kim} that $L$ is free. Then $M \cong M^{\ast\ast} \cong L^\ast$ is also free. Thus $\id(R) \leq d$. Consequently, $R$ is Gorenstein.
\end{proof}

The following is our second main result of this section.

\begin{corollary}
	Suppose a nonzero module $M$ is $(d+2)$-torsionless and has finite injective dimension, where $d = \dim R$. Then $R$ is Gorenstein.
\end{corollary}

\begin{proof}
	We may assume the ring is complete. Then the result follows from Proposition \ref{remf}.
\end{proof}

One can prove the case $d = 0$ even without using Proposition \ref{remf}:

\begin{fact}
	Let $R$ be artinian. Suppose there exists a nonzero reflexive module $M$ of finite injective dimension. Then $R$ is Gorenstein.
\end{fact}

\begin{proof}
	We dualize $R^n \to M^\ast \to 0$. Then by definition, $M \cong M^{\ast\ast} \oplus X \cong R^n$ for some $n > 0$ and some $X$. Now, by the Krull--Remak--Schmidt theorem, $R$ is injective and therefore $R$ is Gorenstein.
\end{proof}
\medskip

\section{On the injective dimension of Hom}

This section is about:

\begin{question}
	(See \cite[Question 2.18]{q1}). If there exists a nonzero $R$-module $M$ such that $\Hom_R(M,M)$ has finite injective dimension, then is $R$ Gorenstein?
\end{question}

\begin{question}
	(See \cite[Question 4.9]{q2}). Let $M$ be a nonzero $R$-module such that the injective dimensions of $M$ and $\Ext^i_R(M,M)$ are finite for all $0 \leq i \leq \dim(R) - \depth(M)$. Is $R$ Gorenstein?
\end{question}

\subsection{Injective dimension of Hom}

We start with the following easy exercise:

\begin{fact}
	Question 8.1 is true when $R$ is artinian.
\end{fact}

\begin{proof}
	We may assume that $M$ is indecomposable. Apply $\Hom_R(-, M)$ to $0 \to \Syz(M) \to R^n \to M \to 0$ and obtain $0 \to \Hom_R(M, M) \to \Hom_R(R^n, M) \to C \to 0$, where $C$ is the cokernel. Since $\dim R = 0$, $\Hom_R(M, M)$ is injective; i.e., $M^n = \Hom_R(M, M) \oplus C$. By the Krull--Remak--Schmidt theorem, $M^m = \Hom_R(M, M)$ for some $m \leq n$. Hence $M = E_R(k)$, and consequently $R = \Hom_R(E_R(k), E_R(k))$ is injective. Therefore $R$ is Gorenstein.
\end{proof}

Here we present a modern proof of the above fact:

\begin{proof}
	We have $\Ext^1_R(M, \Hom_R(M, M)) = 0$, and so $M$ is free (see \cite{ta}). This gives the injectivity of $R$. Hence $R$ is Gorenstein.
\end{proof}

\begin{proposition}
	Let $M$ be a nonzero finite-length $R$-module such that the injective dimensions of $M$ and $\Hom_R(M, M)$ are finite. Then $R$ is Gorenstein.
\end{proposition}

\begin{proof}
	The ring is Cohen--Macaulay. Without loss of generality, we may assume the ring is complete. Then we use Matlis duality. It turns out that $\pd(\Hom_R(M, M)^\vee) < \infty$. But $\Hom_R(M, M)^\vee \cong M \otimes M^\vee$. In view of \cite[A.9(ii)]{celp}, $R$ is Gorenstein.
\end{proof}

\begin{proposition}
	Let $M \subseteq E_R(k)$ be a nonzero finite-length $R$-module such that the injective dimension of $\Hom_R(M, M)$ is finite. Then $R$ is Gorenstein and $\id(M) < \infty$.
\end{proposition}

\begin{proof}
	The ring is Cohen--Macaulay. Without loss of generality, we may assume the ring is complete. Dualizing the embedding $0 \to M \hookrightarrow E_R(k)$ yields the surjection $R \cong \widehat{R} \twoheadrightarrow M^\vee \to 0$. Let $I := \ker(\rho^\vee)$. Since $M^\vee = R/I$, we deduce that $I = \Ann(M^\vee) = \Ann(M)$. Recall from $\Hom_R(M, M)^\vee \cong M \otimes M^\vee$ that the module
	\[
	M = \frac{M}{IM} \cong M \otimes M^\vee = \Hom_R(M, M)^\vee
	\]
	has finite projective dimension. Thus $M^\vee = R/I$ has finite injective dimension. Since it is cyclic, we apply the mentioned result of Peskine--Szpiro to deduce that $R$ is Gorenstein. Because we have observed that $\pd(M) < \infty$, we can now deduce from the Gorenstein property that $\id(M) < \infty$.
\end{proof}

\begin{corollary}
	Let $M \subseteq E_R(k)$ be a nonzero finite-length $R$-module such that the projective dimension of $M$ is finite. Then $R$ is Gorenstein and $\id(\Hom_R(M, M)) < \infty$.
\end{corollary}

The following was proved in \cite{q2} by a spectral sequence argument:

\begin{proposition}
	Let $N$ be torsion-free with $\depth(N) \geq \dim R - 1$ such that the injective dimensions of both $N$ and $\Ext^\ast_R(M, N)$ are finite. Then $\pd(M) < \infty$.
\end{proposition}

\begin{proof}
	Without loss of generality, we may assume $\depth(N) = \dim R - 1$. By passing to syzygies, we may assume that $M$ is maximal Cohen--Macaulay. We also assume in addition that $R$ is generically Gorenstein. Consider the exact sequence
	\[
	0 \longrightarrow \omega_R^n \longrightarrow \omega_R^m \longrightarrow N \longrightarrow 0 \qquad (\ast)
	\]
	and recall that $\Ext^1_R(M, \omega_R) = 0$. This gives
	\[
	0 \to \Hom_R(M, \omega_R^n) \to \Hom_R(M, \omega_R^m) \to \Hom_R(M, N) \to \Ext^1_R(M, \omega_R) = 0.
	\]
	Since $\id(\Hom_R(M, N)) < \infty$, we deduce that
	\[
	\Ext^{\gg 0}_R(-, \Hom_R(M, \omega_R))^n \stackrel{\cong}{\longrightarrow} \Ext^{\gg 0}_R(-, \Hom_R(M, \omega_R))^m.
	\]
	If $\Ext^{\gg 0}_R(-, \Hom_R(M, \omega_R)) \neq 0$, then it follows that $n = m$, but this contradicts the rank consideration in $(\ast)$. Thus $\Ext^{\gg 0}_R(-, \Hom_R(M, \omega_R)) = 0$. By definition, $\id(\Hom_R(M, \omega_R)) < \infty$. Since $M$ is maximal Cohen--Macaulay, $\Hom_R(M, \omega_R)$ is also maximal Cohen--Macaulay. Hence $\Hom_R(M, \omega_R) \cong \bigoplus \omega_R$. Let $(-)^\dagger = D(-)$ be as before. Then
	\[
	M = M^{\dagger\dagger} \cong \left(\bigoplus \omega_R\right)^\dagger \cong \bigoplus R,
	\]
	as claimed.
\end{proof}

The ideal case has an affirmative answer:

\begin{proposition}\label{88}
	Let $I$ be a nonzero ideal of $R$ such that the injective dimension of $\Hom_R(I, I)$ is finite. Then $R$ is Gorenstein.
\end{proposition}

\begin{proof}
	Let $d = \dim R$. We proceed by induction on $d$. The case $d = 0$ is trivial. Suppose $d = 1$. In this case we are able to apply \cite[2.3]{q1} to deduce that $I = \Gamma_{\fm}(I) \oplus R^m$ for some $m \geq 0$. The ring is Cohen--Macaulay and $\depth(R) = d > 0$. Thus $\Gamma_{\fm}(R) = 0$. But $\Gamma_{\fm}(I) \subseteq \Gamma_{\fm}(R) = 0$. Hence $I \cong R^m$, and by rank considerations, $I \cong R$. Consequently, the injective dimension of $R \cong \Hom_R(I, I)$ is finite. Therefore $R$ is Gorenstein.
	
	Thus we may assume $d > 1$ and apply the inductive hypothesis to deduce that the natural map $\pi : R \to \Hom_R(I, I)$ is an isomorphism over the punctured spectrum. Recall that $\pi$ is injective because $I$ is torsion-free. From the exact sequence
	\[
	0 \longrightarrow R \longrightarrow \Hom_R(I, I) \longrightarrow C := \operatorname{Coker}(\pi) \longrightarrow 0,
	\]
	we deduce that $\Supp(C) \subseteq \{\fm\}$. Suppose $C$ is nonzero. Then
	\[
	\inf\{i : \Ext^i_R(C, R) \neq 0\} = \depth(R) = d > 1.
	\]
	This implies that the extension class of the sequence lies in $\Ext^1_R(C, R) = 0$, and so the sequence splits; i.e., $\Hom_R(I, I) \cong R \oplus C$. Since $C \subseteq \Gamma_{\fm}(\Hom_R(I, I)) = 0$, we see $C = 0$. Then $\Hom_R(I, I) \cong R$. Consequently, the injective dimension of $R$ is finite. Thus $R$ is Gorenstein.
\end{proof}

\begin{proposition}
	Let $\mathbb{Q} \subseteq R$ and $M$ be torsion-free such that the injective dimension of $\Hom_R(M, M)$ is finite. Then $R$ is Gorenstein.
\end{proposition}

\begin{proof}
	Since $\Supp(M) = \Spec(R)$, and by the previous argument, $M$ is locally free in codimension one. Recall that $R$ is Cohen--Macaulay, and hence satisfies $(S_2)$. Therefore, according to \cite[5.5]{Ki}, $M$ has constant rank. Since $\mathbb{Q} \subseteq R$, the rank is a unit in $R$. By \cite[3.1]{q1}, $R$ is Gorenstein.
\end{proof}

\begin{remark}
	What about big Cohen--Macaulay modules of finite injective dimension?
\end{remark}

By $\overline{R}$ we denote the integral closure of $R$ in the total ring of fractions of $R$. We learned the following through a joint work with Mahdavi \cite[6.15]{Elham}:

\begin{proposition}\label{811}
	Assume $R$ is analytically unramified. If $\id_R(\overline{R}) < \infty$, then $R$ is normal. In particular, $R$ is Gorenstein.
\end{proposition}

\begin{proof}
	By Bass'  conjecture, $R$ is Cohen--Macaulay and in particular satisfies Serre's $(S_2)$ condition. Let $\mathfrak{p} \in \Spec(R)$. Since being analytically unramified is a local property, $A := R_{\mathfrak{p}}$ is analytically unramified. Suppose $\operatorname{ht}(\mathfrak{p}) = 1$. Then $A \to \overline{A}$ is integral. Since $\id_R(\overline{R}) < \infty$, we obtain $\id_A(\overline{A}) < \infty$. But $\overline{A}$ is maximal Cohen--Macaulay, so $\overline{A} \cong \bigoplus \omega_A$. We may assume the direct sum is a singleton. Then
	\[
	A \subseteq \operatorname{Hom}_{\overline{A}}(\overline{A}, \overline{A}) \subseteq \operatorname{Hom}_A(\overline{A}, \overline{A}) = \operatorname{Hom}_A(\omega_A, \omega_A) = A.
	\]
	Hence $A = \operatorname{Hom}_{\overline{A}}(\overline{A}, \overline{A}) = \overline{A}$, so $A$ is regular. In particular, $R$ satisfies Serre's $(R_1)$ condition. Moreover,
	\[
	(S_2) + (R_1) \Longrightarrow R \text{ is normal}.
	\]
	Thus $R = \overline{R}$, and so $\id(R) = \id_R(\overline{R}) < \infty$. This means $R$ is Gorenstein. The proof is now complete.
\end{proof}

\begin{remark}
	Here we present another argument for Proposition \ref{811}. Indeed, let $C$ be the conductor ideal. Recall that it is a common ideal of $R$ and $\overline{R}$. Then $\overline{R} \subseteq \Hom_{\overline{R}}(C, C) \subseteq \Hom_R(C, C) \subseteq \overline{R}$. Consequently, $\id_R(\Hom_R(C, C)) = \id_R(\overline{R}) < \infty$. Now use Proposition \ref{88} to see that $R$ is Gorenstein. Thus $\pd_R(\overline{R}) < \infty$. Now apply \cite[6.15]{Elham} to see that the ring is normal as well.
\end{remark}

\subsection{Remarks on the prime characteristic case}

Rings in this subsection are of prime characteristic $p$, and we have the Frobenius map $F : R \to R$. Each iteration $F_n$ of $F$ defines a new $R$-module structure on the set $R$, and this $R$-module is denoted by $\up{\F_n} R$, where $a \cdot b = a^{p^n} b$ for $a, b \in R$. Recall that $R$ is said to be $F$-finite if for some (or equivalently, all) $n$, the algebra ${}^F_n R$ is finitely generated as an $R$-module.

\begin{observation}
	Let $R$ be $F$-finite and $F$-pure. If $\id_R(\Hom_R(\up{\F_n} R, \up{\F_n} R)) < \infty$, then $R$ is regular.
\end{observation}

\begin{proof}
	Since $R$ is $F$-pure, $R$ is a direct summand of $\up{\F_n} R$. This shows that $\up{\F_n} R$ is a direct summand of $\Hom_R(\up{\F_n} R, \up{\F_n} R)$. This, in turn, implies that $\id(\up{\F_n}R) < \infty$. According to \cite[Proposition 3.2]{q1}, $R$ is Gorenstein. Since the ring is Gorenstein, we have $\pd(\up{\F_n} R) < \infty$. By a result of Kunz, $R$ is regular.
\end{proof}

\begin{observation}
	Let $R$ be $F$-finite and $F$-injective with quotient singularity. Then $R$ is regular, provided $\id_R(\Hom_R(\up{\F_n} R, \up{\F_n} R)) < \infty$.
\end{observation}

\begin{proof}
	Since $R$ has quotient singularities, $\Hom_R(\up{\F_n} R, \up{\F_n}R)$ is maximal Cohen--Macaulay. In view of \cite[Proposition 3.2]{q1}, the ring is Gorenstein. But Gorenstein and $F$-injective imply $F$-pure. Now use the previous observation.
\end{proof}

\begin{corollary}
	Let $R$ be a $2$-dimensional $F$-finite and $F$-injective ring. Then $R$ is regular, provided $\id_R(\Hom_R(\up{\F_n} R, \up{\F_n}R)) < \infty$.
\end{corollary}

\begin{proof}
	Since $\depth(\Hom_R(\up{\F_n}R, \up{\F_n} R)) \geq \min\{2, \depth(R)\}$, it is maximal Cohen--Macaulay. It remains to apply the previous argument.
\end{proof}

There are some situations in which $R$ is a summand of $\up{\F_n} R$, and so the above observations can be applied to them as well.

\section{On the injective dimension of tensor products}

Recall the following question raised in \cite[Question 4.2]{celgo}:

\begin{question}
	Suppose $R$ is a $d$-dimensional local Cohen--Macaulay ring of minimal multiplicity with canonical module. Let $M$ be a maximal Cohen--Macaulay module such that $M \otimes M^\dagger \cong \omega_R$. Then is $M \cong R$ or $M \cong \omega_R$?
\end{question}

The following result removes the minimal multiplicity assumption from \cite[Corollary 4.5]{celgo} in the one-dimensional case.

\begin{proposition}
	Suppose $R$ is a $1$-dimensional local Cohen--Macaulay ring with canonical module. Let $I$ be a reflexive ideal such that $I \otimes I^\dagger \cong \omega_R$. Then $I \cong R$.
\end{proposition}

\begin{proof}
	Recall that $\depth(I) > 0$; otherwise $R/\fm \subseteq I \subseteq R$, which is impossible because $\depth(R) = 1$. By $\dagger$-duality we mean the dual with respect to $\Hom_R(-, \omega_R)$. Let $x$ be an $R$-regular element (which is then also $I$-regular). It is easy to see that
	\[
	E_{\overline{R}}(k) = \omega_{\overline{R}} = \overline{I} \otimes \overline{I}^\dagger,
	\]
	where $\overline{R}=R/xR$ and $\overline{I}=I/xI$. Applying duality again gives
	\[
	\overline{R} = \Hom(E_{\overline{R}}(k), E_{\overline{R}}(k)) = \Hom(\overline{I} \otimes \overline{I}^\dagger, \omega_{\overline{R}})
	= \Hom(\overline{I}, \overline{I}^{\dagger\dagger}) = \Hom(\overline{I}, \overline{I}).
	\]
	Since $I$ is reflexive, it is a second syzygy, and therefore $\overline{I} = I/xI \hookrightarrow \bigoplus \overline{R}$. In particular, $\overline{I}$ is torsionless, and hence
	\[
	0 = \Ext^1_{\overline{R}}(\Tr(\overline{I}), \overline{R}) = \Ext^1_{\overline{R}}\bigl(\Tr(\overline{I}), \Hom(\overline{I}, \overline{I})\bigr).
	\]
	Recall that $\depth(\overline{I}) = 0$. Then, thanks to \cite[Proposition 3.3(2)]{ta}, this implies that $\overline{I}$ is free over $\overline{R}$. Consequently, $I$ is free over $R$, so $I \cong R$.
\end{proof}

We now address Question 9.1 in the following  generality.

\begin{proposition}
	Suppose $R$ is a $d$-dimensional local Cohen--Macaulay ring with canonical module. Let $M$ be a totally reflexive module such that $M \otimes M^\dagger \cong \omega_R$. Then $M \cong R$.
\end{proposition}

\begin{proof}
	By the Auslander--Bridger formula, $\depth(M) = \dim R$. The desired statement behaves well with respect to reduction by regular sequences. Thus, by standard reductions, the claim reduces to the $1$-dimensional case. Applying the argument of the previous proposition, we conclude that $M \cong R$.
\end{proof}

\section{Quotients of finite injective dimension}

This section is about the following question:

\begin{question}
	(See \cite[Question 1.7]{q3}). If a finite direct sum of syzygy modules of $k := R/\fm$ surjects onto a nonzero $R$-module of finite injective dimension, then is $R$ regular?
\end{question}

The authors of \cite{q3} proved:

\begin{fact}\label{ii}
	(See \cite[Corollary 3.4]{q3}). If a finite direct sum of syzygy modules of $k$ surjects onto a nonzero maximal Cohen--Macaulay $R$-module $L$ of finite injective dimension, then $R$ is regular.
\end{fact}

Here is our main result of this section.

\begin{theorem}
	Let $d := \dim R$ and suppose $\Lambda \subseteq [d, \infty)$. If $\bigoplus_{j \in \Lambda} \operatorname{Syz}_j(k)$ surjects onto a nonzero $R$-module of finite injective dimension, then $R$ is regular.
\end{theorem}

\begin{proof}We may assume $\Lambda \subseteq [d, \infty)$ is finite.
In fact, without loss of generality, we may assume $\Lambda$ is a singleton. The ring $R$ is Cohen--Macaulay. Without loss of generality, we may also assume $R$ is complete. In particular, $R$ is equipped with a canonical module $\omega_R$. Let $N$ be the module of finite injective dimension that is a quotient of $\operatorname{Syz}_j(k)$, and consider its $\omega_R$-resolution
	\[
	0 \longrightarrow \omega_R^{r_p} \stackrel{\Phi_p}{\longrightarrow} \cdots \stackrel{\Phi_1}{\longrightarrow} \omega_R^{r_1} \stackrel{\varphi}{\longrightarrow} N \longrightarrow 0
	\]
	with the property that $\operatorname{im}(\Phi_i) \subseteq \fm \omega_R^{r_{i-1}}$ (see \cite[3.3.28(c)]{BH}). In particular, there exists an exact sequence
	\[
	0 \to \Theta := \ker(\varphi) \to \omega_R^n \stackrel{\varphi}{\longrightarrow} N \to 0
	\]
	with the convenient property that
	\[
	\Theta \subseteq \fm \omega_R^n \qquad (+)
	\]
	Let $\psi : \operatorname{Syz}_j(k) \twoheadrightarrow N$ and define $K := \ker(\psi)$; i.e., we have the exact sequence
	\[
	0 \to K \to \operatorname{Syz}_j(k) \stackrel{\psi}{\longrightarrow} N \to 0.
	\]
	From the first short exact sequence we deduce that $\Theta$ has finite injective dimension. Also, $\operatorname{Syz}_j(k)$ is maximal Cohen--Macaulay. These facts enable us to use Ischebeck's formula and deduce that
	\[
	\sup\{i : \Ext^i_R(\operatorname{Syz}_j(k), \Theta) \neq 0\} = \depth(R) - \depth(\operatorname{Syz}_j(k)) = d - d = 0.
	\]
	Thus
	\[
	\Ext^1_R(\operatorname{Syz}_j(k), \Theta) = 0 \qquad (\ast)
	\]
	Now we use the following well-known diagram:
	\[
	\begin{CD}
	@. @. 0 @. 0 \\
	@. @. @AAA @AAA \\
	0 @>>> \Theta @>f_1>> \omega_R^n @>>> N @>>> 0 \\
	@. @| @AfAA @AAA \\
	0 @>>> \Theta @>>> L @>>> \operatorname{Syz}_j(k) @>>> 0 \\
	@. @. @AAA @AAA \\
	@. @. K @= K @>>> 0 \\
	@. @. @AAA @AAA \\
	@. @. 0 @. 0
	\end{CD}
	\]
	\\
	
	From $(\ast)$ we deduce that the middle row splits, and so $L = \operatorname{Syz}_j(k) \oplus \Theta$. Applying $f$ to this yields
	\[
	\begin{array}{rcl}
	\omega_R^n &=& f(L) = f(\operatorname{Syz}_j(k) \oplus \Theta) \\
	&=& f(0 \oplus \Theta) + f(\operatorname{Syz}_j(k) \oplus 0) \\
	&=& f_1(\Theta) + f(\operatorname{Syz}_j(k) \oplus 0) \\
	&\stackrel{(+)}{\subseteq}& \fm \omega_R^n + f(\operatorname{Syz}_j(k) \oplus 0) \\
	&\subseteq& \omega_R^n.
	\end{array}
	\]
	In particular, $\omega_R^n = \fm \omega_R^n + f(\operatorname{Syz}_j(k) \oplus 0)$, and by Nakayama's lemma, $f(\operatorname{Syz}_j(k) \oplus 0) = \omega_R^n$. In other words, there exists a surjection
	\[
	\operatorname{Syz}_j(k) \cong \operatorname{Syz}_j(k) \oplus 0 \stackrel{f}{\twoheadrightarrow} \omega_R^n \twoheadrightarrow \omega_R.
	\]
	By Fact \ref{ii}, $R$ is regular.
\end{proof}

Rings in the rest of this section are Cohen--Macaulay.

\begin{remark}\label{104}
	Let $P$ be a prime ideal and suppose that there exists a surjection $P \twoheadrightarrow \omega_R$. Then the following holds for any $Q \in \Spec(R)$:
	\begin{itemize}
		\item[(i)] If $Q \subseteq P$, then $R_Q$ is regular.
		\item[(ii)] If $P \nsubseteq Q$, then $R_Q$ is Gorenstein.
	\end{itemize}
\end{remark}

\begin{proof}
	(i) From the surjection $P \twoheadrightarrow \omega_R$ we obtain another surjection
	\[
	\operatorname{Syz}(R_P / P R_P) = P R_P \twoheadrightarrow (\omega_R)_P = \omega_{R_P}.
	\]
	Thus $R_P$ is regular. Since localization of a regular ring is again regular, $R_Q$ is regular.
	
	(ii) Since $P \nsubseteq Q$, we have $P R_Q = R_Q$. From the surjection $P \twoheadrightarrow \omega_R$ we obtain another surjection
	\[
	R_Q = P R_Q \twoheadrightarrow (\omega_R)_Q = \omega_{R_Q}.
	\]
	In particular, $\omega_{R_Q}$ is cyclic, and therefore $R_Q$ is Gorenstein.
\end{proof}

\begin{corollary}\label{105}
	Let $R$ be a $2$-dimensional Cohen--Macaulay ring and let $P$ be a prime ideal such that there exists a surjection $P \twoheadrightarrow \omega_R$. Then $R$ has an isolated Gorenstein singularity.
\end{corollary}

\begin{proof}
	First we observe that $\operatorname{ht}(P) > 0$. If not, then $P \in \min(R) = \Ass(R)$. Thus $P = (0 : x)$ for some $x \in R$. It is clear that $x \neq 0$ because $P$ is proper. Then $xP = 0$, and from $P \twoheadrightarrow \omega_R$ we deduce that $x \omega_R = 0$. This contradicts the fact that $\omega_R$ is faithful. Hence $\operatorname{ht}(P) > 0$. The claim is clear if $\operatorname{ht}(P) > 1$ (see Fact \ref{ii}). Thus, without loss of generality, we may assume $\operatorname{ht}(P) = 1$.
	
	Let $Q \in \Spec(R)^\circ := \Spec(R) \setminus \{\fm\}$. Since $\dim(R) = 2$, one of the following holds:
	\begin{itemize}
		\item[(i)] $Q \subseteq P$, or
		\item[(ii)] $P \nsubseteq Q$.
	\end{itemize}
	By Remark \ref{104}, $R_Q$ is Gorenstein in either case. By definition, $R$ has an isolated Gorenstein singularity.
\end{proof}

\begin{corollary}\label{106}
	Let $R$ be a Cohen--Macaulay ring and let $P$ be a prime ideal such that there exists a surjection $P \twoheadrightarrow \omega_R$. Then $R$ is generically Gorenstein.
\end{corollary}

\begin{proof}
	Without loss of generality, we may assume $\dim(R) > 0$ (see Fact \ref{ii}). Let $Q \in \Ass(R) = \min(R)$. Also, recall from the previous argument that $\operatorname{ht}(P) > 0$. Then $P \nsubseteq Q$. Thanks to Remark \ref{104}(ii), $R_Q$ is Gorenstein. Thus $R$ is generically Gorenstein.
\end{proof}

\medskip
\section{Comments on ``When is $\operatorname{Gdim}((\omega_R^\ast)^\dagger) < \infty$?''}

In this section, $R$ is a Cohen--Macaulay ring with canonical module.
Recall that the notation $M^\ast$ (resp. $M^\dagger$) stands for $\Hom_R(M, R)$ (resp. $\Hom_R(M, \omega_R)$).

\begin{observation}
	Let $(S, \fn)$ be a regular local ring and $R := S / \fn^n$. If $\operatorname{Gdim}((\omega_R^\ast)^\dagger) < \infty$, then $R$ is a hypersurface.
\end{observation}

\begin{proof}
	We may assume $n > 1$, because otherwise $R$ is a field. Set $E := \omega_R$. We use the more common Matlis duality notation $(-)^{\vee}$ instead of $(-)^\dagger$. We know $\mathfrak{m} E^* = 0$. Thus
	\[
	E^* = \bigoplus_X R/\mathfrak{m},
	\]
	where $X \neq \emptyset$. Recall that
	\[
	(E^*)^{\vee} = \left( \bigoplus_X R/\mathfrak{m} \right)^{\vee} = \bigoplus_X (R/\mathfrak{m})^{\vee} = \bigoplus_X R/\mathfrak{m}.
	\]
	Hence $\operatorname{Gdim}(R/\mathfrak{m}) = \operatorname{Gdim}((\omega_R^\ast)^\dagger) < \infty$. This shows that $R$ is Gorenstein. In particular, its socle is $1$-dimensional as a vector space. But this implies
	\[
	E := \operatorname{Soc}(R) = \mathfrak{m}^{n-1} \;\Longrightarrow\; \mu(\mathfrak{m}^{n-1}) = 1 \;\Longrightarrow\; \mathfrak{m} \text{ is principal}.
	\]
	This means that $R$ is a hypersurface.
\end{proof}

The above observation extends to a more general setting.

\begin{definition}[Ramras]
	We say that a local ring is \emph{eventually BNSI} if there exists $\ell \geq 1$ such that for every non-free module $M$, we have $\beta_i(M) > \beta_{i-1}(M)$ for all $i > \ell$. 	If $\ell=1$ we say  $R$ is \emph{BNSI}.
\end{definition}

For example, $R := k[[x, y, \ldots]] / \mathfrak{m}^t$ is eventually BNSI for any $t \in \mathbb{N}$.

\begin{proposition}
	Suppose $R$ is  BNSI and equipped with a canonical module. If $\operatorname{Gdim}((\omega_R^\ast)^\dagger) < \infty$, then $R$ is a hypersurface. In particular, $R$ is  principal ideal ring. 
\end{proposition}

\begin{proof}
	It is a classic result that $\operatorname{depth}(R) = 0$ because $R$ is   BNSI (see \cite[Page~299 Remark]{ram}). Thanks to the Auslander--Bridger formula and the assumption $\operatorname{Gdim}((\omega_R^\ast)^\dagger) < \infty$, we observe that $(\omega_R^\ast)^\dagger$ is totally reflexive, and in particular reflexive. Over  BNSI rings, any reflexive module is free. Hence $(\omega_R^\ast)^\dagger$ is free.
	
	Since $R$ is equipped with a canonical module, it is Cohen--Macaulay. Because $R$ is Cohen--Macaulay and $\operatorname{depth}(R) = 0$, we have $\dim(R) = \depth(R) = 0$. Consequently, $\omega_R$ is injective. This allows us to deduce that
	\[
	(\omega_R^\ast)^\dagger = \Hom_R(\Hom_R(\omega_R, R), \omega_R) \cong \omega_R \otimes_R \Hom_R(R, \omega_R) = \omega_R \otimes_R \omega_R.
	\]
	In summary, $\omega_R \otimes_R \omega_R$ is free. Now consider the natural short exact sequence
	\[
	0 \lo \operatorname{Syz}(\omega_R) \lo R^n \lo \omega_R \lo 0,
	\]
	and tensor it with $\omega_R$. This yields the short exact sequence
	\[
	0  \lo\ker(\pi)  \lo \omega_R^n \stackrel{\pi} \lo \omega_R \otimes_R \omega_R  \lo 0.
	\]
	But recall that $\omega_R \otimes_R \omega_R$ is free, which means that the above sequence splits. Thus
	\[
	\ker(\pi) \oplus (\omega_R \otimes_R \omega_R) \cong \omega_R^n.
	\]
	Recall again that $R$ is artinian. This allows us to use the Krull--Remak--Schmidt decomposition theorem and deduce that $\omega_R \cong R$. Hence $R$ is Gorenstein. By the work of Ramras \cite[Poposition 4.1]{ram}, $R$ is a hypersurface, as claimed.
\end{proof}

\begin{remark}
	Let $(R, \mathfrak{m})$ be a $1$-dimensional nearly Gorenstein and far-flung Gorenstein ring such that $\operatorname{Gdim}((\omega_R^\ast)^\dagger) < \infty$. We leave it to the reader to check that $R$ is Gorenstein.
\end{remark}

\begin{acknowledgement}
	I thank Ghosh, Gheibi, and Puthenpurakal for their comments.
\end{acknowledgement}
\maketitle

\maketitle

\end{document}